\newtheorem{theorem}{Theorem}[section]
\newtheorem{lemma}[theorem]{Lemma}
\newtheorem{corollary}[theorem]{Corollary}
\theoremstyle{definition}
\newtheorem{definition}[theorem]{Definition}
\newtheorem{remark}[theorem]{Remark}
\newcommand{\N}{\mathbb{N}}
\newcommand{\Z}{\mathbb{Z}}
\newcommand{\Q}{\mathbb{Q}}
\newcommand{\R}{\mathbb{R}}
\newcommand{\C}{\mathbb{C}}
\newcommand{\K}{\mathbb{K}}		
\newcommand{\Shat}{\widehat{S}}
\newcommand{\Cuntz}[1]{\mathcal{O}_{#1}}
\newcommand{\JiangSu}{\mathcal{Z}}
\newcommand{\Aut}[1]{{\rm Aut}(#1)}
\newcommand{\id}[1]{{\rm id}_{#1}}
\newcommand{\cpt}[2]{\mathcal{K}_{#1}(#2)}
\newcommand{\bdd}[2]{\mathcal{L}_{#1}(#2)}
\newcommand{\lscal}[3]{\,_{#1}\langle #2, #3 \rangle}
\newcommand{\rscal}[3]{\langle #2, #3 \rangle_{#1}}
\newcommand{\KUmod}[2]{KU^{#1,{\rm mod}}_{#2}}
\newcommand{\KUort}[2]{KU^{#1}(#2)}
\newcommand{\KUbun}[2]{\mathscr{K}U^{#1}_{#2}}
\newcommand{\Kbun}[1]{\mathscr{K}^{#1}}
\newcommand{\KUobun}[2]{\mathscr{K}U^{#1}(#2)}
\newcommand{\Sec}[4]{{\rm Sec}_{#4}(#1,#2;#3)}
\newcommand{\Pb}{\mathcal{P}}
\newcommand{\TopPair}[1]{\mathcal{T}op^{\rm 2,cpt}_{#1}}
\newcommand{\CW}[1]{\mathcal{CW}^{\rm fin}_{#1}}
\newcommand{\RKK}[4]{\mathcal{R}KK_{#4}(#1; #2, #3)}
\newcommand{\Sp}{\mathcal{S}}
\newcommand{\St}[1]{{\rm St}(#1)}
\newcommand{\RMod}[1]{#1{\rm -Mod}}
\newcommand{\RLine}[1]{#1{\rm -Line}}
\DeclareMathOperator{\colim}{\rm colim}
\begin{document}
	
\title[A noncommutative model for higher twisted $K$-Theory]{A noncommutative model for higher twisted $K$-Theory}
\author{Ulrich Pennig}	
\thanks{U.P. was supported by the SFB 878}

\begin{abstract}
We develop a operator algebraic model for twisted $K$-theory, which includes the most general twistings as a generalized cohomology theory (i.e.\ all those classified by the unit spectrum $bgl_1(KU)$). Our model is based on strongly self-absorbing $C^*$-algebras. We compare it with the known homotopy theoretic descriptions in the literature, which either use parametrized stable homotopy theory or $\infty$-categories. We derive a similar comparison of analytic twisted $K$-homology with its topological counterpart based on generalized Thom spectra. Our model also works for twisted versions of localizations of the $K$-theory spectrum, like $KU[1/n]$ or $KU_{\Q}$.
\end{abstract}

\maketitle

\section{Introduction}
It was observed by Donovan and Karoubi in \cite{paper:DonovanKaroubi} that there is a Thom isomorphism (and Poincar\'{e} duality) in the absence of $K$-orientability if one considers $K$-theory with local coefficients, also known as twisted $K$-theory. Whereas ordinary operator algebraic $K$-theory involves matrix-valued functions on a space, twisted $K$-theory replaces those by sections of bundles of matrix algebras. Morita equivalence classes of the latter are classified by torsion elements in $H^3(X,\Z)$. Rosenberg \cite{paper:RosenbergCT} and Atiyah and Segal \cite{paper:AtiyahSegal} extended the definition in \cite{paper:DonovanKaroubi} by considering bundles of compact operators corresponding to non-torsion twists. The observation that twisted $K$-groups classify $D$-brane charges sparked interactions with string theory \cite{paper:BCMMS}. The equivariant twisted $K$-theory of a Lie group $G$ with respect to the adjoint action on itself was proven to be closely related to the Verlinde ring of positive energy representations of the loop group $LG$ by Freed, Hopkins and Teleman \cite{paper:FreedHopkinsTelemanI}. 

From a homotopy theoretic viewpoint the twists of complex topological $K$-theory $KU$ are classified by the generalized cohomology theory $gl_1(KU)$ associated to its unit spectrum. Its first group $[X,BGL_1(KU)]$ contains $[X, BBU(1)] \cong H^3(X,\Z)$ as a direct summand, which appears in the applications mentioned in the last paragraph. The existence of more general twistings was already pointed out in \cite{paper:AtiyahSegal}, but they were neglected since they had no obvious geometric interpretation in terms of bundles of operator algebras at that time. Nevertheless, in the setting of parametrized stable homotopy theory as developed by May and Sigurdsson \cite{book:MaySigurdsson} it is just as easy to deal with the whole space $BGL_1(KU)$ as it is to deal with $BBU(1)$. An equivalent approach to twisted generalized (co)homology theories was worked out by Ando, Blumberg, Gepner, Hopkins and Rezk in  \cite{paper:Ando, paper:ABGH}. It uses a generalized Thom spectrum associated to a map $X \to KU$-Line, where $KU$-Line is an $\infty$-groupoid with the homotopy type $BGL_1(KU)$.  

As elegant as these approaches are, they give no hint towards an operator algebraic interpretation as in the case of ordinary twists. It is the goal of this paper to show that in fact the ``higher twists'' appear very naturally in the setting of operator algebras if one considers bundles of stabilized strongly self-absorbing $C^*$-algebras instead of bundles of compact operators. The class of these algebras was studied first by Toms and Winter \cite{paper:TomsWinter, paper:WinterZStable} and Dadarlat and Winter \cite{paper:DadarlatKK1}. 

Their definition was motivated by the following observations: Certain algebras play a cornerstone role in Elliott's classification program of separable, nuclear, simple $C^*$-algebras: Tensorial absorption of the Cuntz algebra $\Cuntz{\infty}$ detects purely infiniteness \cite[Thm.\ 7.2.6]{book:RoerdamStoermer}. Toms and Winter conjecture that in case of stably finite $C^*$-algebras, tensorial absorption of the Jiang-Su algebra $\JiangSu$ detects finite nuclear dimension \cite{paper:TomsAH}. Similarly, the Cuntz algebra $\Cuntz{2}$ tensorially absorbs another algebra if and only if the latter is simple, separable, unital and nuclear \cite[Thm.\ 7.1.2]{book:RoerdamStoermer}. The definition of strongly self-absorbing isolates a property that all of these algebras share: They tensorially absorb themselves in a very strong sense (see Def.\ \ref{def:ssa}).

Surprisingly, this property has a lot of interesting topological and homotopy theoretic consequences: Given a strongly self-absorbing $C^*$-algebra $D$, the functor $X \mapsto K_*(C(X) \otimes D)$ is a multiplicative generalized cohomology theory, which can be represented by a commutative symmetric ring spectrum $KU^D_{\bullet}$ \cite{paper:DadarlatP2}. In particular, $KU^{\Cuntz{\infty}}_{\bullet}$, $KU^{\JiangSu}_{\bullet}$ both yield complex topological $K$-theory and if $D$ is an infinite UHF-algebra, $KU^D_{\bullet}$ represents localizations of $K$-theory, such as $KU[1/p]$. It was proven in \cite{paper:DadarlatP1, paper:DadarlatP2} that $B\Aut{D \otimes \K}$ is an infinite loop space, which comes with a map $B\Aut{D \otimes \K} \to BGL_1(KU^D)$. In case $D = \C$ we recover $B\Aut{\K} = BPU(H) \simeq BBU(1)$. In all other cases the map $\Aut{D \otimes \K} \to GL_1(KU^D)$ is an isomorphism on $\pi_k$ for $k > 0$ and the inclusion $K_0(D)^{\times}_+ \to K_0(D)^{\times}$ on $\pi_0$. In particular, it is an equivalence if $D$ is strongly self-absorbing and purely infinite \cite[Thm.\ 4.6]{paper:DadarlatP2}. 

Based on these observations we will show in the current paper that the operator algebraic $K$-theory of section algebras of bundles with stabilized strongly self-absorbing fibers is indeed a valid model for higher twisted $K$-theory: In Section $2$ we first construct a $KU^D_{\bullet}$-module spectrum $\KUmod{D}{\bullet}$, on which the automorphism group $\Aut{D \otimes \K}$ acts via maps of symmetric spectra. We then form a universal bundle of symmetric $KU^D_{\bullet}$-module spectra $\KUbun{D}{\bullet}$ over $B\Aut{D \otimes \K}$. Sections of this bundle pulled back to a compact space $X$ will provide the topological version of twisted $K$-theory. We prove that it has all the properties of a twisted generalized cohomology theory on pairs of compact topological spaces in Thm.\ \ref{thm:Twisted_K} and compare it with its analytic counterpart, i.e.\ the operator algebraic $K$-theory of the corresponding section algebra. 

In the next section we compare our definition with the twisted $K$-theory obtained from parametrized stable homotopy theory. We will not develop the analogue of the $qf$-model structure from \cite{book:MaySigurdsson} for symmetric spectra. Instead we consider an orthogonal ring spectrum $V \mapsto KU(V)$, which is equivalent to $KU^{\C}_{\bullet}$ when restricted to the symmetric group action. For any $C^*$-algebra $A$ there is a $KU$-module spectrum $KU^A$, which is equivalent as a symmetric spectrum to $\KUmod{D}{\bullet}$ for $A = D \otimes \K$. This allows us to compare our previous definition with the parametrized cohomology theories from \cite[Ch.\ 20]{book:MaySigurdsson}, which now merely amounts to citing the right theorems. In case $D = \{\C, \Cuntz{\infty}, \JiangSu\}$ no information is lost when switching to orthogonal spectra. However, there seems to be no obvious orthogonal counterpart for the symmetric ring spectrum $KU^D_{\bullet}$ in the case of general $D$, so we only retain the $KU^{\C}_{\bullet}$-module structure during this process.

Topological twisted $K$-homology is dual to twisted $K$-theory and can be defined as the homotopy groups of a generalized Thom spectrum $Mf_{\bullet}$ associated to a map $f \colon X \to BGL_1(KU)$. Depending on the setup, $Mf_{\bullet}$ can be constructed as an $\infty$-categorical colimit \cite{paper:Ando, paper:ABGH}, via the bar construction \cite[Ch.\ 23]{book:MaySigurdsson} or as a smash product. We obtain $Mf_{\bullet}$ from the bundle of spectra $f^*\KUbun{D}{\bullet}$ by collapsing its zero section (Def.~\ref{def:twisted_K_homology}) in the spirit of \cite{paper:Wang1}. Analytic $K$-homology on the other hand is defined via $KK$-theory. To compare the two functors on finite CW-complexes, we generalize the proof in \cite{paper:BaumHigsonSchick} to the twisted case. In particular, we obtain an intermediate homology theory from framed bordism mapping to both of the theories. The map to analytic $K$-homology relies on a twisted version of Poincar\'{e} duality for bundles of $C^*$-algebras \cite{paper:EEK} and a twisted bordism invariant index map. This also yields a new proof of a similar result for ordinary twists, which was proven by Wang \cite{paper:Wang1} for smooth manifolds and for finite CW-complexes \cite{paper:Wang2}. 

In the last part of this paper we give an explicit construction of higher twists over spaces of the form $X = \Sigma Y$, where we have $[\Sigma Y, BGL_1(KU^D)] \cong [Y, \Aut{D \otimes \K}] \cong K^0(Y)^{\times}_+$. This yields continuous $C(X)$-algebras with fibers $D \otimes \K$ representing all possible higher twists in the suspension case, in particular for spheres. We then calculate the higher twisted $K$-groups of these spaces in terms of the $K$-theory of $Y$ using a Mayer-Vietoris argument. 
\vspace{0.4cm}
\paragraph{\it Acknowledgements}
The author would like to thank Marius Dadarlat, Thomas Schick and Michael Joachim for many enlightening discussions and Qiaochu Yuan for answering a question on mathoverflow.net concerning the Mayer-Vietoris sequence for twisted $R$-homology. Part of this paper was finished during the trimester ``Noncommutative geometry and its applications'' at the Hausdorff institute for Mathematics in Bonn. The author would like to thank the organizers of this workshop and the staff at the HIM for the hospitality.

\section{Twisted $K$-theory} 

\subsection{Strongly self-absorbing $C^*$-algebras and $KU^D_{\bullet}$}
The notion of strongly self-absorbing $C^*$-algebras was introduced by Toms and Winter in \cite{paper:TomsWinter}. We will use the following definition, which is closer to topological applications and equivalent to the original one by \cite[Thm.\ 2.2]{paper:DadarlatKK1} and \cite{paper:WinterZStable}.
\begin{definition} \label{def:ssa}
A unital $C^*$-algebra $D$ is called \emph{strongly self-absorbing} if it is separable and there exists a $*$-isomorphism $\psi \colon D \to D \otimes D$ and a path of unitaries $u \colon [0,1) \to U(D \otimes D)$ such that for all $d \in D$
\[
	\lim_{t \to 1} \lVert \psi(d) - u_t(d \otimes 1)u_t^* \rVert = 0\ .
\]
\end{definition}

As mentioned in the introduction the main examples of strongly self-absorbing $C^*$-algebras are the Cuntz algebras $\Cuntz{\infty}$ and $\Cuntz{2}$, the Jiang-Su algebra $\JiangSu$, infinite UHF algebras and tensor products of those. Their most important properties are summarized in \cite[Thm.\ 2.1]{paper:DadarlatP1}. 

In the following $D$ will always denote a strongly self-absorbing $C^*$-algebra. Let $X$ be a compact Hausdorff space. The inverse isomorphism $\psi^{-1} \colon D \otimes D \to D$ together with the diagonal homomorphism $C(X) \to C(X \times X) \cong C(X) \otimes C(X)$ equips $K_{\ast}(C(X) \otimes D)$ with a ring structure. Since $\Aut{D}$ is contractible by \cite[Thm.\ 2.3]{paper:DadarlatP1}, the homotopy invariance of $K$-theory implies that this multiplication does not depend on the choice of $\psi$. Likewise we obtain from the above property that the projection $[1_{C(X) \otimes D}] \in K_0(C(X) \otimes D)$ is the unit element of this graded ring. 

The definition of the spectrum $KU^D_{\bullet}$ representing $X \mapsto K_*(C(X) \otimes D)$ will require graded $C^*$-algebras. We refer the reader to \cite[VI.14]{book:Blackadar} for an introduction. All tensor products that appear in the following are considered to be graded. Following \cite{paper:DadarlatP2} we denote the $C^*$-algebra $C_0(\R)$ graded by odd and even functions by $\Shat$. This is a coassociative, counital, coalgebra with comultiplication $\Delta \colon \Shat \to \Shat \otimes \Shat$ and counit $\epsilon \colon \Shat \to \C$ \cite{paper:FunctorialKSpectrum}, \cite{paper:Joachim}. It has the universal property that for any graded $\sigma$-unital $C^*$-algebra $B$ there is a correspondence between essential graded $*$-homomorphisms $\varphi \colon \Shat \to B$ and unbounded, self-adjoint, regular, odd operators $D \colon A \to A$ \cite{paper:Trout}. Moreover, let $\C\ell_1$ be the complex Clifford algebra spanned by the even element $1$ and the odd element $c$ with $c^2 = 1$. 

Each strongly self-absorbing $C^*$-algebra $D$ gives rise to a commutative symmetric ring spectrum $KU^D_{\bullet}$ \cite{paper:DadarlatP2}: Equip $D \otimes \K$ with the trivial grading and consider the sequence of topological spaces
\[
	KU^D_n = \hom_{\rm gr}(\widehat{S}, (\C\ell_1 \otimes D \otimes \K)^{\otimes n})\ ,
\]
where each of the homomorphism spaces is equipped with the pointwise norm topology and pointed by the zero homomorphism. Let $\mu_{m,n}$ be the following multiplication map
\[
	\mu_{m,n} \colon KU^D_m \wedge KU^D_n \to KU^D_{m+n} \quad ; \quad \varphi \wedge \psi \mapsto (\varphi \otimes \psi) \circ \Delta 
\]
By the universal property of $\Shat$ the unbounded, self-adjoint, regular, odd multiplier $t \mapsto t\,c$ on $C_0(\R,\C\ell_1)$ defines a $*$-homomorphism $\Shat \to C_0(\R,\C\ell_1)$. Tensor product with $\C \to D \otimes \K$ given by $1 \mapsto 1 \otimes e$ induces a map $\widehat{\eta_1} \colon \Shat \to C_0(\R, \C\ell_1 \otimes D \otimes \K)$, which in turn induces $\eta_1 \colon S^1 \to KU^D_1$. Forming products we obtain $\eta_n \colon S^n \to KU^D_n$. It was shown in \cite[Thm.\ 4.2]{paper:DadarlatP2} that $(KU^D_{\bullet}, \mu_{\bullet,\bullet}, \eta_{\bullet})$ gives a commutative symmetric ring spectrum, which represents the functor $X \mapsto K_{\bullet}(C(X) \otimes D)$.

We will also need the definition of a module spectrum over $KU^D_{\bullet}$.
\begin{definition} \label{def:mod_spectrum}
Let $(R_{\bullet}, \mu_{\bullet,\bullet}, \eta_{\bullet})$ be a commutative symmetric ring spectrum. A \emph{module spectrum} is a sequence of pointed spaces $(M_n)_{n \geq 0}$ with basepoint preserving continuous left action of the symmetric group $\Sigma_n$ on $M_n$ for each $n \geq 0$ together with $\Sigma_m \times \Sigma_n$-equivariant action maps $\alpha_{m,n} \colon M_m \wedge R_n \to M_{m+n}$ for all $n,m \geq 0$, such that the following diagrams commute:
\[
	\xymatrix@C=2cm{
	M_{\ell} \wedge R_m \wedge R_n \ar[r]^-{\alpha_{\ell,m} \wedge \id{R_n}}  \ar[d]_-{\id{M_{\ell}} \wedge \mu_{m,n}} & M_{\ell + m} \ar[d]^-{\alpha_{\ell+m,n}} \wedge R_n \\
	M_{\ell} \wedge R_{m+n} \ar[r]_-{\alpha_{\ell,m+n}} & M_{\ell + m + n}
	} \qquad 
	\xymatrix@C=2cm{
	M_n \wedge S^0 \ar[dr] \ar[r]^-{\id{M_n} \wedge \eta_0} & M_n \wedge R_0 \ar[d]^-{\alpha_{n,0}}\\
	& M_n
	}
\]
where the map $M_n \wedge S^0 \to M_n$ in the right diagram is the canonical one.
\end{definition}

We now define the module spectrum $\KUmod{D}{\bullet}$ over $KU^D_{\bullet}$, which carries an $\Aut{D \otimes \K}$-action that is compatible with the action maps.

\begin{definition} \label{def:KUD_spectrum}
Let $D$ be a strongly self-absorbing $C^*$-algebra. Let $\KUmod{D}{\bullet}$ be the following sequence of spaces
\[
	\KUmod{D}{n} = \hom_{\rm gr}(\widehat{S}, D \otimes \K \otimes (\C\ell_1 \otimes D \otimes \K)^{\otimes n})\ ,
\]
where as above the graded homomorphisms are equipped with the point-norm topology and $D \otimes \K$ is considered to be trivially graded. Let 
\[
	\alpha_{m,n} \colon \KUmod{D}{m} \wedge KU^D_n \to \KUmod{D}{m+n} \quad ; \quad \varphi \wedge \psi \mapsto (\varphi \otimes \psi) \circ \Delta
\] 
\end{definition}
Observe that the space $\KUmod{D}{n}$ carries a natural left action of the group $\Aut{D \otimes \K}$ in the following way: Let $\beta \in \Aut{D \otimes \K}$ and $\varphi \in \KUmod{D}{n}$, then we set $\beta \cdot \varphi = (\beta \otimes \id{(\C\ell_1 \otimes D \otimes \K)^{\otimes n}}) \circ \varphi$. Moreover there is a natural $\Sigma_n$-action on $\KUmod{D}{n}$ by permuting the $n$ factors of the graded tensor product $(\C\ell_1 \otimes D \otimes \K)^{\otimes n}$ using the Koszul sign rule just as in $KU^D_{\bullet}$.

\begin{theorem} \label{thm:KU_module}
The pair $(\KUmod{D}{\bullet}, \alpha_{\bullet,\bullet})$ forms a module spectrum over the ring spectrum $KU^D_{\bullet}$, which is equivalent to $KU^D_{\bullet}$ as a module spectrum. The module structure is compatible with the action of $\Aut{D \otimes \K}$ described above. All structure maps $\KUmod{D}{n} \to \Omega \KUmod{D}{n+1}$ are weak homotopy equivalences for $n \geq 1$ and the coefficients are given by 
\[
	\pi_i(\KUmod{D}{\bullet}) = K_i(D)\ .
\] 
\end{theorem}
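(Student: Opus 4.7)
The overall strategy is to leverage the established ring-spectrum result \cite[Thm.~4.2]{paper:DadarlatP2} for $KU^D_\bullet$ via an explicit comparison morphism, and then extract the $\Omega$-spectrum property and coefficient computation from it. First, the associativity and unit diagrams of Definition~\ref{def:mod_spectrum} follow directly from the coassociativity and counit identities of $\Delta \colon \Shat \to \Shat \otimes \Shat$, by the very same manipulation that establishes associativity of $\mu_{\bullet,\bullet}$ in the ring spectrum case. The $\Sigma_m \times \Sigma_n$-equivariance is immediate from the Koszul-sign action permuting the $(\C\ell_1 \otimes D \otimes \K)^{\otimes n}$ factors. For the $\Aut{D \otimes \K}$-compatibility, a direct calculation yields $\beta \cdot \alpha_{m,n}(\varphi \wedge \psi) = \alpha_{m,n}((\beta \cdot \varphi) \wedge \psi)$ because $\beta \in \Aut{D \otimes \K}$ only affects the leftmost, untensored $D \otimes \K$ factor of $\KUmod{D}{m}$, which is disjoint from the factors touched by $\psi \in KU^D_n$.

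Next I would construct the comparison map $\iota_n \colon KU^D_n \to \KUmod{D}{n}$, $\varphi \mapsto (\iota \otimes \id{B_n}) \circ \varphi$, where $B_n = (\C\ell_1 \otimes D \otimes \K)^{\otimes n}$ and $\iota \colon \C \to D \otimes \K$ is the unital $*$-homomorphism $1 \mapsto 1_D \otimes e$ for a fixed rank-one projection $e \in \K$. Associativity of the tensor product gives $\iota_{m+n} \circ \mu_{m,n} = \alpha_{m,n} \circ (\iota_m \wedge \id{KU^D_n})$, so $\iota_\bullet$ is a morphism of $KU^D_\bullet$-module spectra. The central technical obstacle is showing that $\iota_n$ is a weak homotopy equivalence for $n \geq 1$. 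For such $n$ the algebra $B_n$ absorbs $D \otimes \K$: strong self-absorption $D \otimes D \cong D$ together with $\K \otimes \K \cong \K$ furnishes a graded $*$-isomorphism $\sigma \colon (D \otimes \K) \otimes B_n \xrightarrow{\cong} B_n$, and an application of Definition~\ref{def:ssa} (in the form collected in \cite[Thm.~2.1]{paper:DadarlatP1}) shows that the composition $\sigma \circ (\iota \otimes \id{B_n})$ is approximately unitarily equivalent to an automorphism of $B_n$. Since the point-norm functor $\hom_{\rm gr}(\Shat, -)$ sends approximately unitarily equivalent $*$-homomorphisms to homotopic maps, $\iota_n$ is a weak equivalence for every $n \geq 1$. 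This is the step where strong self-absorption is used in an essential way.

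Finally, by \cite[Thm.~4.2]{paper:DadarlatP2} each structure map $KU^D_n \to \Omega KU^D_{n+1}$ is a weak equivalence for $n \geq 1$ and $\pi_i(KU^D_\bullet) = K_i(D)$. The naturality square
\[
\xymatrix@C=2cm{
KU^D_n \ar[r] \ar[d]_{\iota_n} & \Omega KU^D_{n+1} \ar[d]^{\Omega \iota_{n+1}} \\
\KUmod{D}{n} \ar[r] & \Omega \KUmod{D}{n+1}
}
\]
then forces the bottom structure map to be a weak equivalence for $n \geq 1$, since the other three arrows are, and the same diagram identifies $\pi_i(\KUmod{D}{\bullet}) \cong \pi_i(KU^D_\bullet) = K_i(D)$, completing the proposed argument.
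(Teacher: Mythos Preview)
Your argument is correct and follows the same route as the paper: the same comparison map $\varphi \mapsto (1_D \otimes e \otimes {-}) \circ \varphi$ (called $\theta_n$ there), the same use of strong self-absorption to show it is a levelwise weak equivalence for $n \geq 1$, and the same appeal to \cite[Thm.~4.2]{paper:DadarlatP2} for the $\Omega$-spectrum and coefficient statements. The only item the paper makes explicit that you leave implicit is the remark that the positive $\Omega$-spectrum property gives semistability, which is what upgrades your levelwise weak equivalence to an actual stable equivalence of symmetric $KU^D_\bullet$-module spectra.
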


\begin{proof} \label{pf:KU_module}
It is clear that the $\Sigma_n$-action preserves the basepoint of $\KUmod{D}{n}$. It is a consequence of the coassociativity of $\Delta \colon \Shat \to \Shat \otimes \Shat$ that the associativity diagram in Def.~\ref{def:KUD_spectrum} commutes. Since $\eta_0$ sends the non-basepoint of $S^0$ to the counit $\epsilon \colon \Shat \to \C$, the unitality diagram  also commutes. The group $\Sigma_n$ acts on $\KUmod{D}{n}$ from the left. This implies the equivariance of $\alpha_{m,n}$ with respect to the $\Sigma_m \times \Sigma_n$-action on both sides. This proves that $(\KUmod{D}{\bullet}, \alpha_{\bullet,\bullet})$ is in fact a symmetric $KU^D_{\bullet}$-module spectrum. 

Let $\beta \in \Aut{D \otimes \K}$, $\varphi \in \KUmod{D}{m}$, $\psi \in KU^D_{n}$, then we have 
\[
	\alpha_{m,n}((\beta \cdot \varphi) \wedge \psi) = (\beta \otimes \id{}) \circ (\varphi \otimes \psi) \circ \Delta = \beta \cdot \alpha_{m,n}(\varphi \wedge \psi)\ ,
\]
which is the claimed compatibility with the action. To see that $\KUmod{D}{\bullet}$ is equivalent to $KU^D_{\bullet}$ as a module spectrum, consider the $\Sigma_n$-equivariant map $\theta_n \colon KU^D_{n} \to \KUmod{D}{n}$ given by $\theta_n(\varphi)(f) = 1 \otimes e \otimes \varphi(f)$ for $f \in \Shat$, $\varphi \in KU^D_n$. Associativity of the graded tensor product implies $\alpha_{m,n}(\theta_m(\varphi) \wedge \psi) = \theta_{m+n}(\mu_{m,n}(\varphi \wedge \psi))$ for $\varphi \in KU^D_m$, $\psi \in KU^D_{n}$, i.e.\ $\theta_n$ is a map of $KU^D_{\bullet}$-module spectra. Since $D$ is strongly self-absorbing, the homomorphism $D \otimes \K \to (D \otimes \K)^{\otimes 2}$ given by $a \mapsto 1 \otimes e \otimes a$ is homotopic to an isomorphism. Therefore $\theta_n$ is a weak equivalence for $n \geq 1$. The statement about the structure maps and the coefficients follows from \cite[Thm.\ 4.2]{paper:DadarlatP2}. This implies that $\KUmod{D}{\bullet}$ is semistable, therefore $\theta_{\bullet}$ is actually an equivalence of symmetric spectra. 
\end{proof}

\subsection{Bundles of $KU^D_{\bullet}$-module spectra}
The advantage of $\KUmod{D}{\bullet}$ over $KU^D_{\bullet}$ is that it carries a natural left action of the group $\Aut{D \otimes \K}$, which enables us to form a bundle of symmetric spectra over $B\Aut{D \otimes \K}$.
\begin{definition} \label{def:KU_bundle}
Let $E\Aut{D \otimes \K} \to B\Aut{D \otimes \K}$ be the universal principal $\Aut{D \otimes \K}$-bundle and denote the left action of $\Aut{D \otimes \K}$ on $\KUmod{D}{n}$ by $\lambda_n$. We define
\[
	\KUbun{D}{n} = E\Aut{D \otimes \K} \times_{\lambda_n} \KUmod{D}{n}\ ,
\]
let $\pi_n \colon \KUbun{D}{n} \to B\Aut{D \otimes \K}$ be the projection map and let $\sigma_n \colon B\Aut{D \otimes \K} \to \KUbun{D}{n}$ be the zero section. Note that $\Aut{D \otimes \K}$ also acts on the loop space $\Omega \KUmod{D}{\bullet}$ preserving its basepoint. We denote the corresponding bundle by $\Omega \KUbun{D}{\bullet}$ and we have structure maps $\kappa_n \colon \KUbun{D}{n} \to \Omega\KUbun{D}{n+1}$ induced by $\KUmod{D}{n} \to \Omega\KUmod{D}{n+1}$.
\end{definition}

Let $\TopPair{D}$ be the category of triples $(X,B,f)$, where $X$ is a compact Hausdorff space, $B \subset X$ is a closed subspace and $f \colon X \to B\Aut{D \otimes \K}$ is a continuous map classifying a principal bundle $\Pb_f = f^*E\Aut{D \otimes \K}$ over $X$ with an associated bundle of $C^*$-algebras $\mathcal{A} = \mathcal{A}_f \to X$ with fiber $D \otimes \K$. A morphism in $\TopPair{D}$ is given by a pair $(\varphi, \hat{\varphi})$, where $\varphi$ is a continuous map of pairs $(X,B) \to (X',B')$ and $\hat{\varphi}$ is an isomorphism $\Pb_f \to \Pb_{f'}$. Define $c(X,B) = X \amalg B \times [0,1) / \!\!\sim$, where the equivalence relation identifies $(b,0) \in B \times [0,1)$ with $b \in X$. The bundle $\Pb_f$ extends in a canonical way to a bundle $c\,\Pb_f \to c(X,B)$ with an associated bundle of $C^*$-algebras $c\,\mathcal{A} \to c(X,B)$. We have a short exact sequence of section algebras
\begin{equation} \label{eqn:short_exact}
	0 \to SC(B,\left.\mathcal{A}\right|_B) \to C_0(c(X,B), c\,\mathcal{A}) \to C(X,\mathcal{A}) \to 0
\end{equation}
where $SC(B,\left.\mathcal{A}\right|_B) = C_0((0,1)) \otimes C(B,\left.\mathcal{A}\right|_B)$ is the suspension. We define the ordinary $K$-theory of the pair $(X,B)$ with coefficients in $D$ by $K^i_D(X,B) = K_i(C_0(c(X,B)) \otimes D)$. This is justified by the fact that for $D = \C$ this group is the $K$-theory of the mapping cone of the inclusion $B \subseteq X$ relative to its tip. Since $D$ is strongly self-absorbing $K^i_D(X,B)$ is a ring. Twisted $K$-theory is a functor from $\TopPair{D}$ to graded abelian groups in such a way that it maps a triple $(X,B,f)$ to a module over the ordinary $K$-theory of $(X,B)$ with coefficients in $D$. 

\begin{definition}\label{def:twisted_K}
Let $(X,B,f) \in {\rm Ob}(\TopPair{D})$, let $\Pb = f^*E\Aut{D \otimes \K} \to X$ and let $\Sec{X}{B}{\Pb}{n}$ be the space of all pairs $(\tau, H)$, where $\tau \colon X \to f^*\KUbun{D}{n}$ is a section and $H \colon B \times I \to (\left.f\right|_B \times \id{I})^*\KUbun{D}{n}$ is another section which satisfies $H_0 = \left.\tau\right|_B$ and $H_1(a) = \sigma_n(f(a))$ (where $\sigma_n$ denotes the zero section from Def.\ \ref{def:KU_bundle}). This space is pointed by $(\sigma_n \circ f, {\rm const}_{\sigma})$, where ${\rm const}_{\sigma}$ is the constant homotopy on $\left.\sigma_n \circ f\right|_B$. Observe that the structure maps of $\KUbun{D}{n}$ induce a continuous map
\[
	\Sec{X}{B}{\Pb}{n} \to \Omega \Sec{X}{B}{\Pb}{n+1} 
\]
Define the \emph{twisted $K$-group} $K^i_{\Pb}(X,B)$ of $(X,B,f)$ by
\[
	K^i_{\Pb}(X,B) = \colim_{n}\pi_{i+n}(\Sec{X}{B}{\Pb}{n})\ .
\]
Let $K^i_{\Pb}(X) = K^i_{\Pb}(X, \emptyset)$. Note that $\Sec{X}{\emptyset}{\Pb}{n}$ is just the space of sections of $f^*\KUbun{D}{n}$ pointed by the zero section.
\end{definition}

\begin{theorem} \label{thm:Twisted_K}
Let $(X,B,f) \in {\rm Ob}(\TopPair{D})$ and let $\Pb = f^*E\Aut{D \otimes \K}$. Twisted $K$-theory has the following properties:
\begin{enumerate}[(a)]
	\item \label{it:a} The structure maps $\kappa_n$ of the bundle $\KUbun{D}{n}$ are weak equivalences for $n \geq 1$. Therefore $K^m_{\Pb}(X,B) \cong \pi_{m+1}(\Sec{X}{B}{\Pb}{1})$.
	\item \label{it:b} $K^{\bullet} \colon \TopPair{D} \to {\rm GrAb}$ is a homotopy invariant contravariant functor to graded abelian groups, such that $K^{\bullet}_{\Pb}(X,B)$ is a module over $K^{\bullet}_D(X,B)$. 
	\item \label{it:c} Topological and analytic twisted $K$-theory are naturally isomorphic, i.e.\ we have $K^m_{\Pb}(X,B) \cong K_m(C_0(c(X,B), c\,\mathcal{A}))$. In particular, there is a natural isomorphism $K^m_{\Pb}(X) \cong K_m(C(X,\mathcal{A}))$ and a trivialization of $\Pb$ induces an isomorphism $K^m_{\Pb}(X,B) \cong K^m_D(X,B)$.
	\item \label{it:d} Let $i \colon (X, \emptyset) \to (X,B)$ and $j \colon B \to X$ be given by inclusion. There is a six-term exact sequence of the form
	\[
		\xymatrix{
			K^0_{\Pb}(X,B) \ar[r]^-{i^*} & K^0_{\Pb}(X) \ar[r]^-{j^*} & K^0_{\left.\Pb\right|_B}(B) \ar[d]^-{\partial_{01}} \\
			K^1_{\left.\Pb\right|_B}(B) \ar[u]^-{\partial_{10}} & K^1_{\Pb}(X) \ar[l]_-{j^*} & K^1_{\Pb}(X,B) \ar[l]_-{i^*}
		}
	\]
	\item \label{it:ex} $K^m_{\Pb}$ satifies excision in the sense that for every open set $U \subset B$ we have 
	\[
		K^m_{\Pb}(X,B) \cong K^m_{\Pb}(X \setminus U, B \setminus U)\ .
	\]
	\item \label{it:e} Let $X = V_0 \cup V_1$ for closed subsets $V_i$, such that their interiors still cover $X$. Let $i_k \colon V_k \to X$ and let $j_k \colon V_0 \cap V_1 \to V_k$ be given by inclusion. Then there is a six-term Mayer-Vietoris sequence:
	\[
		\xymatrix@C=1.7cm{
			K^0_{\Pb}(X) \ar[r]^-{(i_{0}^*, i_{1}^*)} & K^0_{\left.\Pb\right|_{V_0}}\!(V_0) \oplus K^0_{\left.\Pb\right|_{V_1}}\!(V_1) \ar[r]^-{j_0^* - j_1^*} & K^0_{\left.\Pb\right|_{V_0 \cap V_1}}\!(V_0 \cap V_1) \ar[d] \\
			K^1_{\left.\Pb\right|_{V_0 \cap V_1}}\!(V_0 \cap V_1) \ar[u] & K^1_{\left.\Pb\right|_{V_0}}\!(V_0) \oplus K^1_{\left.\Pb\right|_{V_1}}\!(V_1) \ar[l]_-{j_0^* - j_1^*} & K^1_{\Pb}(X) \ar[l]_-{(i_0^*, i_1^*)}
		}
	\] 
\end{enumerate}
\end{theorem}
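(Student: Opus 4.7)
The plan is to first use the stabilization statement (a) to collapse the spectrum-level colimit to the single space $\Sec{X}{B}{\Pb}{1}$, then to establish the topological-to-analytic comparison (c), and finally to derive (b), (d), (ex), (e) from (c) by standard $C^*$-algebra $K$-theory tools. For (a), Theorem~\ref{thm:KU_module} guarantees that the spectrum structure maps $\KUmod{D}{n} \to \Omega \KUmod{D}{n+1}$ are $\Aut{D \otimes \K}$-equivariant weak equivalences for $n \geq 1$. Since $\KUbun{D}{n} \to B\Aut{D \otimes \K}$ is the associated fiber bundle, comparing the long exact homotopy sequences of the two Serre fibrations via the five lemma shows that $\kappa_n$ is itself a weak equivalence, hence so is the induced map $\Sec{X}{B}{\Pb}{n} \to \Omega\Sec{X}{B}{\Pb}{n+1}$ for $n \geq 1$. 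The colimit is therefore attained at $n = 1$.

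The heart of the argument, and the step I expect to be hardest, is (c). A section of $f^*\KUbun{D}{n}$ is by construction an $\Aut{D \otimes \K}$-equivariant map $\Pb \to \KUmod{D}{n}$. Using that $\Aut{D \otimes \K}$ acts only on the leftmost $D \otimes \K$ factor in the definition of $\KUmod{D}{n}$, such equivariant maps correspond bijectively to graded $*$-homomorphisms $\Shat \to C(X, \mathcal{A}) \otimes (\C\ell_1 \otimes D \otimes \K)^{\otimes n}$, where $\mathcal{A} = \Pb \times_{\Aut{D \otimes \K}} (D \otimes \K)$. The relative datum $(\tau, H)$ of Def.~\ref{def:twisted_K} then packages, via the mapping cone description of $c(X,B)$, into a graded $*$-homomorphism
\[
\Shat \to C_0(c(X,B), c\mathcal{A}) \otimes (\C\ell_1 \otimes D \otimes \K)^{\otimes n}.
\]
Applying the Trout/Higson--Kasparov style identification used in \cite[Thm.\ 4.2]{paper:DadarlatP2},
\[
\pi_{m+n}\bigl(\hom_{\rm gr}(\Shat, A \otimes (\C\ell_1 \otimes D \otimes \K)^{\otimes n})\bigr) \cong K_m(A),
\]
to $A = C_0(c(X,B), c\mathcal{A})$ (where strong self-absorption of $D$ ensures the extra $D$- and $\K$-tensor factors are $K$-theoretically inert) yields $K^m_\Pb(X,B) \cong K_m(C_0(c(X,B), c\mathcal{A}))$. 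Specializing to $B = \emptyset$ and to trivializable $\Pb$ produces the remaining statements of (c). The delicate point to verify is that the translation between equivariant maps and section-algebra homomorphisms respects the relative cone structure and the point-norm topology, and that the homotopy-group identification passes to the twisted setting without modification.

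With (c) in hand, the remaining items are essentially formal. For (b), functoriality follows from pullback of bundles and sections, and homotopy invariance from the homotopy equivalence of such pullbacks along homotopic classifying maps; the $K^{\bullet}_D(X,B)$-module structure is induced by the action maps $\alpha_{m,n}$ from Def.~\ref{def:KUD_spectrum}, which by Thm.~\ref{thm:KU_module} are $\Aut{D \otimes \K}$-equivariant (trivially on the second factor) and hence lift to pair a section of $\KUbun{D}{m}$ with a section of the trivial $KU^D_n$-bundle over $X$. For (d), applying the six-term exact sequence of $C^*$-algebra $K$-theory to (\ref{eqn:short_exact}) and rewriting each entry via (c), using $K_i(SC(B, \mathcal{A}|_B)) \cong K_{i-1}(C(B, \mathcal{A}|_B))$ together with Bott periodicity, produces exactly the claimed six-term sequence. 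For (ex), the closed inclusion $(X \setminus U, B \setminus U) \hookrightarrow (X, B)$ induces a restriction of cone section algebras whose kernel is an ideal supported on the contractible locus $U \times [0, 1)$; this ideal has trivial $K$-theory, so (ex) follows from the six-term sequence of the corresponding short exact sequence. Finally, (e) uses the pullback square
\[
C(X, \mathcal{A}) \cong C(V_0, \mathcal{A}|_{V_0}) \times_{C(V_0 \cap V_1, \mathcal{A}|_{V_0 \cap V_1})} C(V_1, \mathcal{A}|_{V_1}),
\]
valid because the interior cover hypothesis makes the restriction maps surjective, combined with Mayer--Vietoris for $K$-theory of $C^*$-algebra pullbacks and (c).
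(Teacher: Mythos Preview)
Your proposal is correct and follows essentially the same route as the paper: identify $\Sec{X}{B}{\Pb}{n}$ with the hom-space
\[
K_n = \hom_{\rm gr}\bigl(\Shat,\ C_0(c(X,B), c\mathcal{A}) \otimes (\C\ell_1 \otimes D \otimes \K)^{\otimes n}\bigr),
\]
use \cite[Thm.\ 4.7]{paper:Trout} and Bott periodicity for (a) and (c), and then derive (b), (d), (ex), (e) from standard operator-algebraic $K$-theory facts (the cone sequence, the pullback square, the contractible ideal supported on $U \times [0,1)$).

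Two small points deserve tightening. In your treatment of (a) you assert that $\kappa_n$ being a weak equivalence ``hence'' makes the induced map on section spaces a weak equivalence; over an arbitrary compact Hausdorff base this is not automatic. The paper avoids this by performing the identification $\Sec{X}{B}{\Pb}{n} \cong K_n$ \emph{first} and then observing directly that $K_n \to \Omega K_{n+1}$ is a weak equivalence for $n \geq 1$ by the same Bott-periodicity argument as for $KU^D_{\bullet}$; since you establish exactly this identification in your step (c), a simple reordering closes the gap. In (d), one must also check that after applying the Bott isomorphism $K_1(SC(B,\mathcal{A}|_B)) \cong K_0(C(B,\mathcal{A}|_B))$ the boundary map of the six-term sequence for (\ref{eqn:short_exact}) actually becomes $j^*$; the paper does this by comparing with the sequence over $B \times [0,1)$, whose middle term is contractible.
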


\begin{proof} \label{pf:Twisted_K}
By the same argument as in the proof of \cite[Thm.\ 4.2]{paper:DadarlatP2} the maps $\KUmod{D}{n} \to \Omega \KUmod{D}{n+1}$ are weak equivalences for $n \geq 1$. The first statement of (\ref{it:a}) follows from the long exact sequence for the fibration $\KUmod{D}{n} \to \KUbun{D}{n} \to B\Aut{D \otimes \K}$. 

Let $(\tau,H) \in \Sec{X}{B}{\Pb}{n}$, let $C_n = (\C\ell_1 \otimes D \otimes \K)^{\otimes n}$. The section $\tau$ yields an element in $\hom_{\rm gr}(\Shat, C(X,\mathcal{A}) \otimes C_n)$. Similarly, $H$ yields a homomorphism in $\hom_{\rm gr}(\Shat, C_0(B \times [0,1),\pi_B^*\mathcal{A}) \otimes C_n)$. The algebra $C_0(c(X,B), c\,\mathcal{A})$ is the pullback of $C(X,\mathcal{A})$ and $C_0(B \times [0,1), \pi_B^*\mathcal{A})$ along the restriction maps to $C(B,\left.\mathcal{A}\right|_B)$. Thus, the definition of $\Sec{X}{B}{\Pb}{n}$ ensures that both homomorphisms piece together to form a point in 
\[
	K_n = \hom_{\rm gr}(\Shat, C_0(c(X,B), c\,\mathcal{A}) \otimes (\C\ell_1 \otimes D \otimes \K)^{\otimes n})\ .
\]
This construction provides a homeomorphism between $\Sec{X}{B}{\Pb}{n}$ and $K_n$ with respect to which $\Sec{X}{B}{\Pb}{n} \to \Omega\,\Sec{X}{B}{\Pb}{n+1}$ translates to a map $K_n \to \Omega K_{n+1}$. The latter is essentially given by Bott periodicity and tensor product with the projection $1 \otimes e \in D \otimes \K$ as in the proof of \cite[Thm.\ 4.2]{paper:DadarlatP2}. Thus, $K_n \to \Omega K_{n+1}$ is a weak equivalence for $n \geq 1$, which proves the isomorphism in (\ref{it:a}). By \cite[Thm.\ 4.7]{paper:Trout} and Bott periodicity, we have natural isomorphisms
\begin{align*}
	\pi_{m+1}(K_1) & = \pi_0(\hom_{\rm gr}(\Shat, S^mC_0(c(X,B), c\,\mathcal{A}) \otimes C_0(\R,\C\ell_1) \otimes D \otimes \K)) \\
	& \cong K_0(S^mC_0(c(X,B), c\,\mathcal{A})) \cong K_m(C_0(c(X,B), c\,\mathcal{A}))
\end{align*}
If $B = \emptyset$ we can identify $c(X,B) = X$ and $c\,\mathcal{A} = \mathcal{A}$. This proves (\ref{it:c}). Since $K_m$ is homotopy invariant and $K_{\bullet}(C_0(c(X,B),c\,\mathcal{A}))$ is a module over $K_{\bullet}(C_0(c(X,B)) \otimes D)$, it also proves (\ref{it:b}). The algebra $C_0(c(X,B), c\,\mathcal{A})$ fits into a short exact sequence
\begin{equation} \label{eqn:cone_seq}
	0 \to SC(B, \left.\mathcal{A}\right|_B) \to C_0(c(X,B), c\,\mathcal{A}) \to C(X,\mathcal{A}) \to 0\ .
\end{equation}
Using the isomorphism from (\ref{it:c}), the six-term exact sequence associated to the above short exact sequence is of the form given in (\ref{it:d}), but we have to check that the map $K_0(C(X,\mathcal{A})) \to K_1(SC(B,\left.\mathcal{A}\right|_B)) \cong K_0(C(B, \left.\mathcal{A}\right|_B))$ (with the indicated isomorphism given by Bott periodicity) coincides with the one induced by the inclusion $B \to X$. We can restrict (\ref{eqn:cone_seq}) to the sequence
\begin{equation} \label{eqn:snd_cone_seq}
	0 \to SC(B,\left.\mathcal{A}\right|_B) \to C_0(B \times [0,1), \left.c\,\mathcal{A}\right|_{B \times [0,1)}) \to C(B, \left.\mathcal{A}\right|_B) \to 0\ .
\end{equation}
in which the middle algebra is contractible. Naturality of the boundary map yields the commutative diagram
\[
	\xymatrix{
		K_0(C(X,\mathcal{A})) \ar[d] \ar[r] & K_1(SC(B,\left.\mathcal{A}\right|_B)) \ar@{=}[d] \\
		K_0(C(B, \left.\mathcal{A}\right|_B)) \ar[r]_-{\cong} & K_1(SC(B,\left.\mathcal{A}\right|_B))
	}
\]
where the vertical arrow is induced by the inclusion and the isomorphism from the six-term sequence associated to (\ref{eqn:snd_cone_seq}) coincides with the Bott isomorphism. This finishes the proof of (\ref{it:d}). The Mayer-Vietoris sequence (\ref{it:e}) is a consequence of the isomorphism in (\ref{it:c}) and the fact that $C(X,\mathcal{A})$ fits into the pullback diagram
\[
	\xymatrix{
		C(X,\mathcal{A}) \ar[r] \ar[d] & C(V_0,\left.\mathcal{A}\right|_{V_0}) \ar[d] \\
		C(V_1, \left.\mathcal{A}\right|_{V_1}) \ar[r] & C(V_0 \cap V_1, \left.\mathcal{A}\right|_{V_0 \cap V_1})
	}
\]
by \cite[Thm.\ 21.2.3]{book:Blackadar}. That excision, i.e.\ (\ref{it:ex}), holds for the twisted $K$-functor follows from the contractibility of the algebra $C_0(U \times [0,1), \left.c\,\mathcal{A}\right|_{U \times [0,1)})$ and the long exact sequence associated to 
	\[
		0 \to C_0(U \times [0,1), \left.c\,\mathcal{A}\right|_{U \times [0,1)}) \to C(c(X,B), c\,\mathcal{A}) \to C(c(X\!\setminus\! U, B\!\setminus\! U), \left.c\,\mathcal{A}\right|_{c(X\setminus U, B \setminus U)}) \to 0 \qedhere
	\]
\end{proof}

\begin{remark} \label{rem:module_structure}
It is straightforward to check that the $K^{\bullet}_D(X,B)$-module structure agress with the one obtained from the $KU^D_{\bullet}$-module structure of the spectrum $\KUmod{D}{\bullet}$.
\end{remark}
	
\begin{remark} \label{rem:homotopy_fibration}
There is an alternative way to prove the six-term exact sequence from Thm.\ \ref{thm:Twisted_K}~(\ref{it:d}), which is closer to stable homotopy theory: Let $K_n^{(X,B)} = K_n$ be the space from the proof of Thm.~\ref{thm:Twisted_K} and let $K_n^{X} := K_n^{(X,\emptyset)}$. One can show that the homotopy fiber of the natural restriction map $K_n^{(X,B)} \to K_n^{X}$ is weakly homotopy equivalent to $\Omega K_n^B$. Moreover, the homotopy fiber of $K_n^X \to K_n^B$ is $K_n^{(X,B)}$. Thus, we can look at the fibration sequence
\[
	\xymatrix{
		& & \Omega K_n^B \ar[r] & K_n^{(X,B)} \ar[r] & K_n^X \ar[r] & K_n^B \\
		K_n^{(X,B)} \ar[r] & K_{n-1}^{X} \ar[r] & K_{n-1}^{B} \ar[u]
	}
\]
in which the vertical map is a weak equivalence for $n > 1$. The exact sequence in (\ref{it:d}) now follows from Bott periodicity and the long exact sequence of homotopy groups.
\end{remark}

\subsection{Parametrized stable homotopy theory}
A slightly different approach to twisted $K$-Theory is based on parametrized stable homotopy theory \cite{book:MaySigurdsson}. To compare it with the definition given above, we  will develop an orthogonal version of the bundle of symmetric $KU_{\bullet}$-module spectra. It has the advantage that it yields parametrized spectra over $B\Aut{A}$ for arbitrary $C^*$-algebras $A$. The price one has to pay for this is that its fibers are only $KU_{\bullet} = KU_{\bullet}^{\C}$-modules in a natural way. Nevertheless, in case $A = \Cuntz{\infty} \otimes \K$, the space $B\Aut{\Cuntz{\infty} \otimes \K}$ has the homotopy type of $BGL_1(KU)$ \cite[Thm.\ 4.6]{paper:DadarlatP2} and the resulting theory is an extension of the above symmetric version representing twisted $K$-Theory including all higher twists. 

\begin{definition} \label{def:orth_KU}
Let $A$ be a $\sigma$-unital $C^*$-algebra. Given a finite dimensional inner product space $V$, we define 
\[
	\KUort{A}{V} = \hom_{\rm gr}(\Shat, A \otimes \C\ell(V) \otimes \K(L^2(V)))\ ,
\]
which is pointed by the zero homomorphism. 
\end{definition}

We denote $\KUort{\C}{V}$ by $KU(V)$. The isometric isomorphism $L^2(V \oplus W) \cong L^2(V) \otimes L^2(W)$ induces an isomorphism of $C^*$-algebras $\K(L^2(V)) \otimes \K(L^2(W)) \to \K(L^2(V \oplus W))$. Moreover, we have an isomorphism $\C\ell(V) \otimes \C\ell(W) \cong \C\ell(V \oplus W)$ of graded $C^*$-algebras. Just as in Def.~\ref{def:KUD_spectrum} we therefore obtain a multiplication map 
\[
	\mu_{V,W} \colon KU(V) \wedge KU(W) \to KU(V \oplus W)\ .
\]
Let $S^V$ be the one-point compactification of $V$. The canonical linear map $V \to \C\ell(V)$ defines an unbounded, self-adjoint, regular, odd multiplier $C_0(V, \C\ell(V))$, which induces the map $\eta_V \colon S^V \to \hom_{\rm gr}(\Shat, \C\ell(V) \otimes \K(L^2(V)))$. It was shown in \cite{paper:JoachimEquiv} that $(KU(\bullet), \mu_{\bullet, \bullet}, \eta_{\bullet})$ is an orthogonal ring spectrum. 

As in the symmetric case there is an action $\alpha_{V,W} \colon \KUort{A}{V} \wedge KU(W) \to \KUort{A}{V \oplus W}$ of $KU$ on $KU^A$ given by $\varphi \wedge \psi \mapsto (\varphi \otimes \psi) \circ \Delta$, where we suppress the above isomorphisms in the notation. Apart from the action of $KU$, $\KUort{A}{V}$ also carries a (left) action of the group $\Aut{A}$. We will omit the proof of the following lemma, which is completely analogous to that of Thm.\ \ref{thm:KU_module}.

\begin{lemma}
The pair $(\KUort{A}{\bullet}, \alpha_{\bullet, \bullet})$ forms an orthogonal module spectrum over $KU$. The module structure is compatible with the action of $\Aut{A}$ described above. The homotopy groups of $\KUort{A}{\bullet}$ are given by $\pi_i(\KUort{A}{\bullet}) = K_i(A)$.
\end{lemma}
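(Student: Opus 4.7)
The plan is to transcribe, almost verbatim, the argument of Theorem~\ref{thm:KU_module}, with the symmetric indexing $n$ replaced by a finite dimensional inner product space $V$, the tensor factor $(\C\ell_1 \otimes D \otimes \K)^{\otimes n}$ replaced by $\C\ell(V) \otimes \K(L^2(V))$, and the first factor $D \otimes \K$ replaced by the $\sigma$-unital $C^*$-algebra $A$. Since none of the structural parts of the proof of Theorem~\ref{thm:KU_module} use strong self-absorption of $D$, they transfer with only cosmetic changes; the only real work is checking the homotopy group computation without the aid of strong self-absorption.

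First I would verify that $(\KUort{A}{\bullet},\alpha_{\bullet,\bullet})$ is an orthogonal module spectrum over $KU$. Associativity of $\alpha$ relative to $\mu$ reduces to coassociativity of $\Delta \colon \Shat \to \Shat \otimes \Shat$ combined with the canonical associativity isomorphisms $\C\ell(V) \otimes \C\ell(W) \cong \C\ell(V\oplus W)$ and $\K(L^2(V)) \otimes \K(L^2(W)) \cong \K(L^2(V\oplus W))$; the unit diagram reduces to the counit property of $\epsilon \colon \Shat \to \C$. The $O(V) \times O(W)$-equivariance of $\alpha_{V,W}$ is automatic because both $V \mapsto \C\ell(V)$ and $V \mapsto \K(L^2(V))$ are functorial for orthogonal transformations, and the formula $\varphi \wedge \psi \mapsto (\varphi \otimes \psi) \circ \Delta$ commutes with this functoriality. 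Compatibility with the $\Aut{A}$-action is then immediate: since $\beta \cdot \varphi = (\beta \otimes \id{})\circ\varphi$ alters only the first tensor factor while $\alpha_{V,W}$ modifies only the later ones, one verifies exactly as in Theorem~\ref{thm:KU_module} that
\[
\alpha_{V,W}\bigl((\beta \cdot \varphi) \wedge \psi\bigr) = (\beta \otimes \id{}) \circ (\varphi \otimes \psi) \circ \Delta = \beta \cdot \alpha_{V,W}(\varphi \wedge \psi).
\]

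For the coefficient computation I would pass to the cofinal sequence $V_n = \R^n$, and combine the universal property of $\Shat$ in the form of \cite[Thm.\ 4.7]{paper:Trout} with Bott periodicity for graded $C^*$-algebras to identify $\pi_{n}(\KUort{A}{\R^n})$ with $K_0$ of a suitable suspension of $A \otimes \C\ell_n$, and hence with $K_n(A)$. The only subtlety — and the main point where the argument diverges from the symmetric case — is to check that the structure maps $\KUort{A}{V} \to \Omega^W \KUort{A}{V\oplus W}$ realize the usual Bott maps on homotopy groups, so that the colimit is independent of the chosen cofinal sequence. This is handled exactly as in the proof of \cite[Thm.\ 4.2]{paper:DadarlatP2}: the structure map is, up to the standard identifications, tensor product with the Bott element in $KU(\R)$, which makes sense for arbitrary $A$ and does not require strong self-absorption. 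The remaining bookkeeping is routine.
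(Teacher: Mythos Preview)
Your proposal is correct and matches the paper's approach exactly: the paper explicitly omits the proof, stating it is ``completely analogous to that of Thm.~\ref{thm:KU_module}'', and your argument is precisely that transcription. Your observation that the structure maps here involve tensoring only with $\C\ell(W)\otimes\K(L^2(W))$ (pure Bott periodicity) rather than with an extra $D\otimes\K$ factor --- hence require no self-absorption hypothesis on $A$ --- is the one genuine point of divergence from Thm.~\ref{thm:KU_module}, and you identify it correctly.
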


We will consider the category of spaces $X$ over a fixed base space $B$, such that each fiber is continuously pointed. These are called ex-spaces.

\begin{definition} \label{def:ex-space}
An \emph{ex-space} over a base space $B$ is a space $X$ together with maps $\pi \colon X \to B$ and $\sigma \colon B \to X$ such that $\pi \circ \sigma = \id{B}$. 
\end{definition}

There is a good notion of smash product $X \wedge_B Y$ for ex-spaces defined by forming smash products fiberwise, and there is also an internal Hom-functor $F_B(X,Y)$ adjoint to $\wedge_B$. It associates to two ex-spaces $X$ and $Y$ over $B$ another ex-space $F_B(X,Y)$, such that the fiber over $b \in B$ agrees with continuous based maps $X_b \to Y_b$ \cite[Def.\ 1.3.12]{book:MaySigurdsson}. Let $f \colon A \to B$ be a continuous map. As explained in \cite[Sec.\ 2.1]{book:MaySigurdsson} there is a pullback functor $f^*$, which has a right adjoint $f_*$ and a left adjoint $f_!$. Let $r \colon B \to {\ast}$ be the canonical map. The parametrized loop space functor is given by $\Omega_BX = F_B(r^*S^1, X)$. Any space $X$ over $B$ can be turned into an ex-space by adding a disjoint basepoint in each fiber, i.e.\ by setting $X_{+} = B \amalg X$ with the obvious projection and section. Given a real finite dimensional inner product space $V$ let 
\[
	\KUobun{A}{V} = E\Aut{A} \times_{\lambda} \KUort{A}{V}\ .
\]
Together with the projection $\pi \colon \KUobun{A}{V} \to B\Aut{A}$ and zero section $\sigma \colon B\Aut{A} \to \KUobun{A}{V}$, this is an ex-space over $B\Aut{A}$. Since $\alpha_{V,W}$ is equivariant with respect to the action of $\Aut{A}$, it extends to a map
\[
	\bar{\alpha}_{V,W} \colon \KUobun{A}{V} \wedge_{B\Aut{A}} KU(W) \to \KUobun{A}{V \oplus W}
\]
The following corollary is the outcome of \cite[Sec.\ 20.2]{book:MaySigurdsson}.

\begin{corollary}
The family $(\KUobun{A}{\bullet}, \bar{\alpha}_{\bullet,\bullet})$ is a parametrized $KU$-module spectrum over $B\Aut{A}$ in the sense of \cite[Def.\ 14.1.1]{book:MaySigurdsson}. Given an ex-space $(X,f,\sigma)$ over $B = B\Aut{A}$, let $\Pb = f^*E\Aut{A}$ and
\[
	\widetilde{K}^{i, {\rm para}}_{\Pb}(X) = \pi_{i}(r_*F_{B}(X, \KUobun{A}{\bullet}))\ .
\]
This is a (reduced) parametrized cohomology theory in the sense of \cite[Def.\ 20.1.2]{book:MaySigurdsson}.
\end{corollary}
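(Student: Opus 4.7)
The plan is to reduce both assertions to machinery already in place. For the module-spectrum claim, the axioms of \cite[Def.~14.1.1]{book:MaySigurdsson} for $\bar{\alpha}_{V,W}$ amount to associativity with respect to the ring multiplication $\mu_{V,W}$ of $KU$, unitality with respect to $\eta_0$, and equivariance with respect to the orthogonal groups $O(V)\times O(W)$ together with compatibility with the sphere coordinates $\eta_V$. All of these hold at the level of total spaces for the action $\alpha_{V,W}$ by the preceding lemma. Now the crucial structural observation is that the $\Aut{A}$-action on $\KUort{A}{V}$ only affects the tensor factor $A$ while $\alpha_{V,W}$ only touches the $\C\ell(V)\otimes\K(L^2(V))$ factors; hence the $\Aut{A}$-action commutes with $\alpha_{V,W}$. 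Consequently each identity descends through the Borel construction $E\Aut{A}\times_{\lambda}(-)$ to the parametrized version, giving the claimed identities for $\bar{\alpha}_{V,W}$.

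Next I would verify the two point-set hypotheses that the May--Sigurdsson machinery demands. First, the zero section $\sigma\colon B\Aut{A}\to\KUobun{A}{V}$ is a fibrewise nondegenerate basepoint: at the fibre, the zero homomorphism is a closed cofibration in $\KUort{A}{V}$ since the obvious scaling $(t,\varphi)\mapsto(1-t)\varphi$ yields a strong deformation retraction of a neighbourhood onto $0$, and this property is preserved under the Borel construction using a CW model for $E\Aut{A}$. Second, $\KUobun{A}{\bullet}$ should be an $\Omega$-spectrum in the parametrized sense, i.e.\ the adjoints $\KUobun{A}{V}\to\Omega_B^{W}\KUobun{A}{V\oplus W}$ should be fibrewise weak equivalences; this is immediate from the corresponding fibre-level equivalences $\KUort{A}{V}\to\Omega^{W}\KUort{A}{V\oplus W}$, which themselves follow by the argument of \cite[Thm.~4.2]{paper:DadarlatP2} reformulated in the orthogonal setting (combining the universal property of $\Shat$ with Bott periodicity for $\C\ell(V)$).

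Once these hypotheses are in place, the second assertion is essentially a direct citation: \cite[Sec.~20.2]{book:MaySigurdsson} shows that for any well-sectioned parametrized $\Omega$-spectrum $E_{\bullet}$ over $B$, the functor $X\mapsto \pi_i(r_* F_B(X,E_{\bullet}))$ satisfies the axioms of a reduced parametrized cohomology theory in the sense of \cite[Def.~20.1.2]{book:MaySigurdsson}: homotopy invariance is built into $F_B$ via the unit/counit of the $(\wedge_B,F_B)$-adjunction, the long exact sequences come from cofibre sequences of ex-spaces combined with the $\Omega$-spectrum property, and the wedge axiom follows from the fact that $F_B$ turns fibrewise wedges into fibrewise products. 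I expect the main obstacle to be the cofibrancy bookkeeping in the second paragraph, since parametrized stable homotopy theory is delicate at the point-set level; if the naive scaling argument does not give a bona fide $\bar{f}$-cofibration, one can instead replace $\KUobun{A}{V}$ by its cofibrant replacement in the $qf$-model structure of \cite{book:MaySigurdsson}, whose availability in the orthogonal (as opposed to symmetric) setting is precisely the reason for passing to $KU(V)$ in this subsection.
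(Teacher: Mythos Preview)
Your proposal is correct and in fact considerably more detailed than the paper's own treatment: the paper gives no proof at all, merely stating that ``the following corollary is the outcome of \cite[Sec.\ 20.2]{book:MaySigurdsson}.'' Your argument fleshes out exactly the ingredients that this citation is meant to cover---the descent of the module axioms through the Borel construction (using that the $\Aut{A}$-action and $\alpha_{V,W}$ act on disjoint tensor factors), the fibrewise $\Omega$-spectrum property, and the well-sectioned condition---and then invokes the same reference for the cohomology-theory axioms. The cofibrancy caveat you raise in your last paragraph is also anticipated by the paper, which in a deleted remark (visible after the bibliography in the source) notes that one may pass to $\lvert{\rm Sing}_{\bullet}(-)\rvert$ to ensure the standing hypotheses of \cite[Sec.~14]{book:MaySigurdsson}; so your suggested fix via cofibrant replacement is exactly in line with the author's intent.
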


\begin{theorem} \label{thm:symm_orth_equiv}
Let $D$ be a strongly self-absorbing $C^*$-algebra, let $A = D \otimes \K$. 
\begin{enumerate}[(a)]
	\item \label{it:rs} If $D \in \{ \Cuntz{\infty}, \JiangSu, \C \}$, then the symmetric ring spectrum obtained from $n \mapsto KU(\R^n)$ by restricting the $O(n)$-action to a $\Sigma_n$-action is equivalent to $KU^D_{\bullet}$ as a ring spectrum.
	\item \label{it:ms} The symmetric spectrum obtained from $\KUort{A}{\bullet}$ by restricting the $O(n)$-action to a $\Sigma_n$-action is equivalent to $\KUmod{D}{\bullet}$ as a $KU^{\C}_{\bullet}$-module spectrum and the equivalence commutes with the group action of $\Aut{D \otimes \K}$ on both sides.
	\item \label{it:kt} Let $(X,B,f) \in \TopPair{D}$, let $\Pb = f^*E\Aut{A}$, let $\iota_+ \colon B_+ \to X_+$ be the induced inclusion of ex-spaces and denote by $c(\iota_+)$ the  (parametrized) mapping cone of $\iota_+$. The parametrized twisted $K$-groups $K^{i, {\rm para}}_{\Pb}(X,B) = \widetilde{K}^{i, {\rm para}}_{\Pb}(c(\iota_+))$ are naturally isomorphic to the twisted $K$-groups from Def.\ \ref{def:twisted_K}, i.e.\ $K^{i, {\rm para}}_{\Pb}(X,B) \cong K^i_{\Pb}(X,B)$.
\end{enumerate}
\end{theorem}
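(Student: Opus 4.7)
The plan is to exhibit explicit comparison maps built from the standard $C^*$-algebraic identifications $\C\ell(\R^n) \cong \C\ell_1^{\otimes n}$ (as graded algebras, via an orthonormal basis) and $\K(L^2(\R^n)) \cong \K^{\otimes n}$ (via a chosen identification $L^2(\R^n) \cong L^2(\R)^{\otimes n}$). Under these isomorphisms the permutation subgroup $\Sigma_n \hookrightarrow O(n)$ corresponds to permuting tensor factors on the right. For part (\ref{it:rs}) these identifications upgrade to a natural isomorphism of symmetric ring spectra between the $\Sigma_n$-restriction of $KU(\R^n)$ and $KU^\C_\bullet$, because the multiplications on both sides are mediated by $\C\ell(V \oplus W) \cong \C\ell(V) \otimes \C\ell(W)$ and $L^2(V \oplus W) \cong L^2(V) \otimes L^2(W)$ and therefore are compatible under concatenation of tensor factors. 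For $D = \C$ this already proves the claim. For $D \in \{\Cuntz{\infty}, \JiangSu\}$, the unital inclusion $\C \to D$ induces a map $KU^\C_\bullet \to KU^D_\bullet$ of commutative symmetric ring spectra, and since $\C \to D$ is a $KK$-equivalence for these two choices, this induces an isomorphism on coefficient groups $K_*(A) \cong K_*(A \otimes D)$ for all $A$ and hence a weak equivalence of ring spectra.

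For part (\ref{it:ms}), with $A = D \otimes \K$, the same identifications rewrite the $\Sigma_n$-restriction of $\KUort{A}{\R^n}$ as $\hom_{\rm gr}(\Shat, D \otimes \K \otimes (\C\ell_1 \otimes \K)^{\otimes n})$. Inserting the unital map $\C \to D$ into each of the $n$ middle $\C\ell_1 \otimes \K$ factors yields a natural map of symmetric spectra into $\KUmod{D}{n}$. Compatibility with the $KU^\C_\bullet$-module structure and with the $\Aut{D \otimes \K}$-action is automatic, since these act respectively by concatenation on the right with factors of $KU^\C_m$ and on the leading $D \otimes \K$ factor, and neither is touched by the insertion. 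To see that the map is a weak equivalence for $n \geq 1$, the plan is to adapt the argument of Thm.~4.2 in \cite{paper:DadarlatP2}: strong self-absorption provides a homotopy between the unit embedding $D \to D^{\otimes(n+1)}$ and a $*$-isomorphism, and passing to $\hom_{\rm gr}(\Shat, -)$ combined with Bott periodicity translates this into the required weak equivalence.

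For part (\ref{it:kt}), the plan is to unwind the parametrized constructions of \cite{book:MaySigurdsson}. By the $(f^*, f_*)$-adjunction for ex-spaces, the global section space $r_* F_{B\Aut{A}}(X_+, \KUobun{A}{V})$ is canonically homeomorphic to the space of sections of $f^*\KUobun{A}{V}$ over $X$, and analogously $r_* F_{B\Aut{A}}(c(\iota_+), \KUobun{A}{V})$ consists of such sections together with a null-homotopy of their restriction to $B$ -- precisely the orthogonal analogue of $\Sec{X}{B}{\Pb}{n}$ with $V = \R^n$. Combining this with the equivalence constructed in (\ref{it:ms}), promoted to the bundle level via the Borel construction $E\Aut{A} \times_{\Aut{A}} (-)$, yields a weak equivalence between the parametrized section spaces and those of Def.~\ref{def:twisted_K}; taking homotopy groups and using that the structure maps are weak equivalences for $n \geq 1$ (as in Thm.~\ref{thm:Twisted_K}~(\ref{it:a})) then produces the claimed natural isomorphism $K^{i,{\rm para}}_{\Pb}(X,B) \cong K^i_{\Pb}(X,B)$. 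I expect the main obstacle to be notational rather than conceptual: one has to carefully match the bookkeeping of basepoints, the parametrized mapping cone, and the indexing conventions of \cite{book:MaySigurdsson} with the elementary description in Def.~\ref{def:twisted_K}. The essential $C^*$-algebraic input is already encoded in (\ref{it:ms}), so once the identification of section spaces is made precise, the homotopical comparison is formal.
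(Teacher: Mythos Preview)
Your proposal is correct and follows essentially the same route as the paper: the comparison maps in (\ref{it:rs}) and (\ref{it:ms}) are precisely the maps $\varphi \mapsto \kappa^{\otimes n} \circ \varphi$ and $\varphi \mapsto (\id_{D \otimes \K} \otimes \kappa^{\otimes n}) \circ \varphi$ that the paper writes down (with $\kappa$ the unit insertion $\C\ell_1 \otimes \K \to \C\ell_1 \otimes D \otimes \K$), and the paper likewise invokes semistability to pass from a $\pi_*$-isomorphism to a stable equivalence---a point you should make explicit. For (\ref{it:kt}) the paper carries out exactly the section-space identification you describe, but instead of arguing abstractly via the fiberwise equivalence from (\ref{it:ms}) it rewrites both section spaces as $\widehat{K}_n = \hom_{\rm gr}(\Shat, C_0(c(X,B), c\,\mathcal{A}) \otimes (\C\ell_1 \otimes \K)^{\otimes n})$ and $K_n$ respectively and then checks that both compute $K_i(C_0(c(X,B), c\,\mathcal{A}))$ via a commutative triangle; this is the concrete implementation of your plan.
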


\begin{proof} \label{pf:symm_orth_equiv}
Observe that $\C\ell(\R^n) \otimes \K(L^2(\R^n)) \cong (\C\ell_1 \otimes \K(L^2(\R)))^{\otimes n} = (\C\ell_1 \otimes \K)^{\otimes n}$, where the isomorphism can be chosen to be compatible with the action of $\Sigma_n$ on both sides. Let $\kappa \colon \C\ell_1 \otimes \K \to \C\ell_1 \otimes D \otimes \K$ be the map that sends $v \otimes T$ to $v \otimes 1_D \otimes T$. This induces a natural map $KU(\R^n) \to KU^D_n$, which sends $\varphi \colon \Shat \to (\C\ell_1 \otimes \K)^{\otimes n}$ to the homomorphism $\hat{\varphi} \colon \Shat \to (\C\ell_1 \otimes D \otimes \K)^{\otimes n}$ with $\hat{\varphi} = \kappa^{\otimes n} \circ \varphi$. On homotopy groups it induces the homomorphism $K_*(\C) \to K_*(D)$ given by $[p] \mapsto [p \otimes 1_D]$. This is an isomorphism for $D \in \{\Cuntz{\infty}, \JiangSu, \C\}$. Since both sides are semistable spectra, they are equivalent. We have 
\[
	(\hat{\varphi} \otimes \hat{\psi}) \circ \Delta = (\kappa^{\otimes n} \otimes \kappa^{\otimes m}) \circ (\varphi \otimes \psi) \circ \Delta = \kappa^{\otimes n+m} \circ (\varphi \otimes \psi) \circ \Delta\ .
\]
Moreover, composition of the map $\eta_n^{KU} \colon S^n \to KU(\R^n)$ with the above map $KU(\R^n) \to KU^D_n$ yields  $\eta_n^{KU^D}$, i.e.\ $\varphi \mapsto \hat{\varphi}$ preserves units. This proves (\ref{it:rs}).

The proof of (\ref{it:ms}) is similar: Again we identify $\C\ell(\R^n) \otimes K(L^2(\R^n))$ with $(\C\ell_1 \otimes \K)^{\otimes n}$. The map $\KUort{A}{\R^n} \to \KUmod{D}{n}$ sends $\varphi$ to $\tilde{\varphi} = (\id{D \otimes \K} \otimes \kappa^{\otimes n}) \circ \varphi$, which induces an isomorphism on homotopy groups and therefore provides a stable equivalence. The map is compatible with the left action of $\Aut{D \otimes \K}$. The following diagram commutes
\[
	\xymatrix@C=1.5cm@R=0.8cm{
		\KUort{A}{\R^n} \wedge KU(\R^m) \ar[r] \ar[d]_-{\alpha_{\R^n, \R^m}} & \KUmod{D}{n} \wedge KU^{\C}_m \ar[d]^-{\alpha^{\C}_{n,m}}\\
		\KUort{A}{\R^{n+m}} \ar[r] & \KUmod{D}{n+m}
	}
\]
where $\alpha_{n,m}^{\C}$ is induced by the map of spectra $KU^{\C}_{\bullet} \to KU^D_{\bullet}$. This proves that the stable equivalence intertwines the $KU$-action on the left hand side with the one by $KU^{\C}_{\bullet}$ on the right.

The mapping cone $c(\iota_+)$ is given by $B_+ \wedge_{B\Aut{A}} I \cup_{\iota_+} X_+$, where $I$ has basepoint $0$. This space is homeomorphic as an ex-space to a quotient $(M(\iota)_+)/\!\!\sim$ of the mapping cylinder, where the equivalence relation identifies all points $(b,0) \in M(\iota)$ in a fiber of $\left.f\right|_B$ with the disjoint basepoint in that fiber. Let $g \colon Y \to C$ be a space over $C = B\Aut{A}$. By local triviality of the bundle $\KUobun{A}{\R^n}$ and \cite[Prop.~2.2.2, eqn.\ (2.2.7)]{book:MaySigurdsson} the mapping space $F_C(Y_+, \KUobun{A}{\R^n})$ is homeomorphic to ${\rm Sec}(Y, g^*\KUbun{A}{\R^n})$. It follows from this that $F_C(c(\iota_+), \KUobun{A}{\R^n})$ is homeomorphic to the subspace of ${\rm Sec}(M(\iota), f^*\KUobun{A}{\R^n})$ consisting of sections $\tau \colon M(\iota) \to f^*\KUobun{A}{\R^n}$ with $\tau(b,0) = \sigma_n(f(b))$, where $\sigma_n \colon B\Aut{A} \to \KUobun{A}{\R^n}$ is the zero section. For $(X,B) \in \TopPair{D}$ this space is homeomorphic to 
\[
	\widehat{K}_n = \hom_{\rm gr}(\Shat, C_0(c(X,B), c\,\mathcal{A}) \otimes (\C\ell_1 \otimes \K)^{\otimes n})
\]
Let $K_n$ be the space from the proof of Thm.\ \ref{thm:Twisted_K}. The map of spectra from (\ref{it:ms}) translates into a map $\widehat{K}_n \to K_n$, which sends $\varphi$ to $(\id{} \otimes \kappa^{\otimes n}) \circ \varphi$. Using the same identification of the homotopy groups with the $K$-groups as in the proof of Thm.\ \ref{thm:Twisted_K} we have a commutative diagram
\[
	\xymatrix@R=0.6cm{
		\pi_{i+n}(\widehat{K}_n) \ar[rr]^-{\varphi \mapsto (\id{} \otimes \kappa^{\otimes n}) \circ \varphi} \ar[dr]_-{\cong} & & \pi_{i+n}(K_n) \ar[dl]^-{\cong} \\
		& K_i(C_0(c(X,B), c\,\mathcal{A}))
	}
\]
proving (\ref{it:kt}).
\end{proof}

\section{Twisted $K$-homology}
For twists classified by $H^3(X,\Z)$ twisted $K$-homology is naturally isomorphic to analytic $K$-homology of a corresponding bundle of compact operators \cite{paper:Wang1}. In this section, we will extend these results to include the higher twists as well. We will start by recalling the definition of topological twisted $K$-homology from \cite{paper:Ando, paper:ABGH}, which uses $\infty$-categories. We will also express twisted $K$-homology in terms of $KK$-theory. This is based on a Mayer-Vietoris argument and will not require any knowledge of $\infty$-categories.

\subsection{Generalized Thom-spectra}
Let $R$ be a ring spectrum. In \cite{paper:Ando, paper:ABGH} the authors apply the theory of  $\infty$-categories to associate a Thom spectrum $Mf$ to a map $f \colon X \to BGL_1(R)$, which they use to define twisted versions of $R$-cohomology and $R$-homology. 

An $\infty$-category is by definition a simplicial set, which satisfies the weak Kan condition. They were called quasicategories by Joyal. Let $\Sp$ be the $\infty$-category of spaces (or $\infty$-groupoids) and denote by $\St{\Sp}$ its stabilization, i.e.\ the $\infty$-category of spectra obtained by taking spectrum objects in $\Sp$ as described in \cite[Sec.\ 1.4.2]{book:Lurie2}. Denote by $\RMod{R} \subset \St{\Sp}$ the full subcategory of $\St{\Sp}$ of $R$-module spectra. Let $\RLine{R} \subset \RMod{R}$ be the subcategory of free rank one $R$-modules and equivalences of those. Let $\mathcal{B}{\rm Aut}_R(R) \subset \RLine{R}$ be the full subcategory on the object $R$. The latter are both $\infty$-groupoids and it was shown in \cite[Cor.\ 2.9]{paper:ABGH} that $\mathcal{B}{\rm Aut}_R(R) \simeq \RLine{R}$ and both have the homotopy type $BGL_1(R)$. 

To give an $\infty$-categorical model for $\RLine{KU^D}$ based on $C^*$-algebras we will use the following technique to turn topological categories into $\infty$-categories: Given a category $\mathcal{C}$ enriched in topological spaces, we obtain an associated simplicially enriched category $\mathcal{C}^s$ with ${\rm Obj}(\mathcal{C}^s) = {\rm Obj}(\mathcal{C})$ and ${\rm Mor}_{\mathcal{C}^s}(x,y) = {\rm Sing}({\rm Mor}_{\mathcal{C}}(x,y))$, where ${\rm Sing}(\,\cdot\,)$ is the singular simplicial set. Let $N_{hc} \colon {\rm sCat} \to {\rm sSet}$ be the homotopy coherent nerve functor as defined by Cordier \cite{paper:Cordier}, \cite[]{book:Lurie}. The simplicial sets ${\rm Mor}_{\mathcal{C}^s}(x,y)$ are Kan complexes. Therefore $\mathcal{C}^{\infty} = N_{hc}(\mathcal{C}^s)$ is an $\infty$-category. Let $\mathcal{B}_D$ be the category with one object and morphism space $\Aut{D \otimes \K}$ and let $\mathcal{B}_D^{\infty}$ be the associated $\infty$-category. We have a functor $\mathcal{B}_D^{\infty} \to \mathcal{B}{\rm Aut}_{KU^D}(\KUmod{D}{\bullet}) \subset \RLine{KU^D}$ that sends the object to $\KUmod{D}{\bullet}$ and maps $\alpha \in \Aut{D \otimes \K}$ to the induced map of module spectra as above. On geometric realizations this map agrees with the one from \cite[Thm.\ 4.6]{paper:DadarlatP1}, i.e.\ it induces an isomorphism on $\pi_k$ for $k > 0$ and the inclusion $K_0(D)^{\times}_+ \to K_0(D)^{\times}$ on $\pi_0$.

\begin{definition} \label{def:twisted_K_homology}
Let $X$ be a finite CW-complex, let $\Pb \to X$ be a principal $\Aut{D \otimes \K}$-bundle classified by $f \colon X \to B\Aut{D \otimes \K}$ and let $f^*\KUbun{D}{\bullet} \to X$ be the associated bundle of symmetric $KU^D$-module spectra. Let $\sigma_n \colon X \to f^*\KUbun{D}{n}$ be the zero section. The \emph{Thom spectrum} of $f$ is defined by
\[
	Mf_n = f^*\KUbun{D}{n}/\sigma_n(X) = r_!f^*\KUbun{D}{n}\ .
\]
It is a symmetric module spectrum over $KU^D_{\bullet}$ with respect to the action maps induced by $\alpha_{\bullet,\bullet}$. Define the topological twisted $K$-homology of $X$ as 
\[
	K^{\Pb, {\rm top}}_i(X) = \pi_i(Mf_{\bullet}) = \pi_i(r_!f^*\KUbun{D}{\bullet})
\]
where $r \colon X \to \{\ast\}$. 
\end{definition}

Let $G = \Aut{D \otimes \K}$. The spectrum $Mf_{\bullet}$ in Def.\ \ref{def:twisted_K_homology} agrees with the smash product $\Sigma^{\infty}P_+ \wedge_{\Sigma^{\infty}G_+} \KUmod{D}{\bullet}$ in symmetric spectra. Since we assumed $X$ to be a finite CW complex, $P$ has the structure of a (free) $G$-CW-complex. In particular, $\Sigma^{\infty}P_+$ turns out to be a cofibrant $\Sigma^{\infty} G_+$-module spectrum. Therefore the above is equivalent to the derived smash product, which was called the algebraic Thom spectrum in \cite{paper:ABGH}. It was proven in \cite[Prop.\ 3.26]{paper:ABGH} that this notion is equivalent in the stable $\infty$-category $KU^D$-Mod to the geometric Thom spectrum \cite[Def.\ 1.4]{paper:ABGH}. Together with \cite[Rem.\ 5.4]{paper:Ando} we obtain the following:
\begin{theorem} \label{thm:Twisted_K_via_inf_cat}
Let $X$ be a finite CW-complex, let $\Pb \to X$ be a principal $\Aut{D \otimes \K}$-bundle, let $\mathcal{A}$ be the associated $D \otimes \K$-bundle. Let $\Pb^{-} \to X$ be a principal $\Aut{D \otimes \K}$-bundle, such that the $D \otimes \K$-bundle $\mathcal{A}^-$ associated to it satisfies $C(X,\mathcal{A} \otimes \mathcal{A}^-) \cong C(X, D \otimes \K)$. Then we have 
\[
	K^i_{\Pb^-}(X) \cong K_i(C(X, \mathcal{A}^-)) \cong \pi_0({\rm Map}_{KU^D}(Mf_{\bullet}, \Sigma^i \KUmod{D}{\bullet} )\ . 
\]
\end{theorem}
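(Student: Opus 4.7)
The first isomorphism $K^i_{\Pb^-}(X) \cong K_i(C(X, \mathcal{A}^-))$ is a direct application of Theorem \ref{thm:Twisted_K}(\ref{it:c}) to the inverse bundle $\Pb^-$; no further work is needed there. All of the substantive content of the theorem lies in the second isomorphism, which identifies the analytic twisted $K$-theory of $\mathcal{A}^-$ with the $KU^D$-cohomology of the Thom spectrum $Mf_\bullet$ of the classifying map for $\Pb$ itself.

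My plan is to chain together the identifications collected in the paragraph just before the theorem. Writing $G = \Aut{D \otimes \K}$, the preceding paragraph already identifies $Mf_\bullet$ with the (derived) smash product $\Sigma^{\infty} P_+ \wedge_{\Sigma^{\infty} G_+} \KUmod{D}{\bullet}$, and invokes \cite[Prop.\ 3.26]{paper:ABGH} to identify this algebraic Thom spectrum with the geometric one in the stable $\infty$-category of $KU^D$-modules. Applying the smash--Hom adjunction then gives
\[
F_{KU^D}\bigl(Mf_\bullet,\ \Sigma^i \KUmod{D}{\bullet}\bigr) \simeq F_{\Sigma^{\infty} G_+}\bigl(\Sigma^{\infty} P_+,\ F_{KU^D}(\KUmod{D}{\bullet},\ \Sigma^i \KUmod{D}{\bullet})\bigr),
\]
and since $\KUmod{D}{\bullet}$ is free of rank one over $KU^D_\bullet$ by Theorem \ref{thm:KU_module}, the internal Hom on the right reduces to $\Sigma^i \KUmod{D}{\bullet}$ equipped with the $G$-action induced by conjugation. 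The role of \cite[Rem.\ 5.4]{paper:Ando} is precisely to identify the resulting $G$-equivariant mapping spectrum with twisted $KU^D$-cohomology of $X$ for the inverse twist, which by Theorem \ref{thm:Twisted_K}(\ref{it:a}) and Bott periodicity is $K^i_{\Pb^-}(X)$ in the sense of Definition \ref{def:twisted_K}.

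The main obstacle I expect is the final identification: checking that the conjugation $G$-action on $F_{KU^D}(\KUmod{D}{\bullet}, \KUmod{D}{\bullet})$ is classified by $\Pb^-$ rather than by $\Pb$. Conceptually this is the statement that $\KUmod{D}{\bullet}$ is an invertible $KU^D_\bullet$-module whose inverse in the Picard groupoid of $KU^D$-lines corresponds, under the map $\Aut{D \otimes \K} \to GL_1(KU^D)$ of \cite[Thm.\ 4.6]{paper:DadarlatP2}, to the inverse bundle of $D \otimes \K$-algebras---and this is exactly how $\Pb^-$ is characterized by the condition $C(X, \mathcal{A} \otimes \mathcal{A}^-) \cong C(X, D \otimes \K)$. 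Verifying this compatibility cleanly at the level of symmetric spectra would be awkward, which is precisely the reason the authors route the argument through the $\infty$-categorical machinery of \cite{paper:ABGH,paper:Ando}, where invertibility of line objects and duality of Thom spectra are built into the formalism.
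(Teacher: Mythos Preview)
Your proposal is correct and follows essentially the same route as the paper. In fact, the paper gives no separate proof of this theorem: the entire argument is the paragraph immediately preceding the statement, which identifies $Mf_\bullet$ with the algebraic Thom spectrum $\Sigma^{\infty}P_+ \wedge_{\Sigma^{\infty}G_+} \KUmod{D}{\bullet}$, invokes \cite[Prop.\ 3.26]{paper:ABGH} to pass to the geometric Thom spectrum, and then cites \cite[Rem.\ 5.4]{paper:Ando} for the identification of $KU^D$-maps out of $Mf_\bullet$ with twisted cohomology for the inverse twist. Your write-up unpacks exactly these steps---the smash--Hom adjunction, the rank-one freeness of $\KUmod{D}{\bullet}$, and the reason the inverse twist $\Pb^-$ appears---more explicitly than the paper does, but the skeleton is identical.
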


Given a CW-complex $X$, a principal $\Aut{D \otimes \K}$-bundle $\Pb \to X$ classified by a map $f \colon X \to B\Aut{D \otimes \K}$ and a subcomplex $A \subset X$, there is a canonical map between the associated Thom spectra $M(\left.f\right|_A)_{\bullet} \to Mf_{\bullet}$, which is obtained from the inclusion $(\left.f\right|_A)^*\KUbun{D}{\bullet} \to f^*\KUbun{D}{\bullet}$ covering the map $A \to X$.

\begin{theorem} \label{thm:Mayer-Vietoris}
Let $(X,A,B)$ be an excisive CW-triad. Let $\Pb$ be a principal $\Aut{D \otimes \K}$-bundle over $X$ and denote its restrictions to $A$, $B$ and $A \cap B$ by $\Pb_{A}$, $\Pb_{B}$ and $\Pb_{AB}$ respectively. There is a boundary homomorphism $\partial \colon K_i^{\Pb, {\rm top}}(X) \to K_{i-1}^{\Pb_{AB}, {\rm top}}(A \cap B)$ such that the Mayer-Vietoris sequence of twisted $K$-homology groups is exact:
\[
	\xymatrix@C=0.5cm{
		\dots \ar[r] & K_i^{\Pb_{AB}, {\rm top}}(A \cap B) \ar[r] & K_i^{\Pb_{A}, {\rm top}}(A) \oplus K_i^{\Pb_{B}, {\rm top}}(B) \ar[r] & K_i^{\Pb, {\rm top}}(X) \ar[r]^-{\partial} & K_{i-1}^{\Pb_{AB}, {\rm top}}(A \cap B) \ar[r] & \dots
	}
\] 
\end{theorem}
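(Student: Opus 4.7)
The plan is to reduce the statement to establishing that the diagram of Thom spectra
\[
	\xymatrix@C=1.3cm@R=0.6cm{
		M(f|_{A\cap B})_{\bullet} \ar[r] \ar[d] & M(f|_{A})_{\bullet} \ar[d] \\
		M(f|_{B})_{\bullet} \ar[r] & Mf_{\bullet}
	}
\]
is a homotopy pushout of symmetric $KU^D_{\bullet}$-module spectra, and then to read off the Mayer-Vietoris sequence, including the boundary homomorphism $\partial$, from the long exact sequence of homotopy groups attached to such a homotopy cocartesian square of spectra.

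First I would use the description of $Mf_{\bullet}$ recalled after Def.\ \ref{def:twisted_K_homology}: up to stable equivalence, $Mf_{\bullet}$ is the derived smash product $\Sigma^{\infty}\Pb_+ \wedge_{\Sigma^{\infty}G_+} \KUmod{D}{\bullet}$, where $G = \Aut{D \otimes \K}$. Since $(X, A, B)$ is an excisive CW-triad, the total space $\Pb$ inherits a free $G$-CW structure in which $\Pb = \Pb_{A} \cup_{\Pb_{AB}} \Pb_{B}$ is a pushout of free $G$-CW-complexes. Applying $\Sigma^{\infty}(-)_+$ turns this into a homotopy pushout square of cofibrant $\Sigma^{\infty}G_+$-module spectra, and derived smashing with $\KUmod{D}{\bullet}$ preserves homotopy pushouts, yielding the desired square.

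As a more hands-on alternative, I would verify the square levelwise: for each $n$, the total space $f^*\KUbun{D}{n}$ is the strict pushout of the bundles over $A$ and $B$ along their common restriction to $A \cap B$, and the zero section $\sigma_n$ decomposes compatibly, so the Thom spaces $Mf_n = f^*\KUbun{D}{n}/\sigma_n(X)$ assemble into a strict pushout square of pointed spaces. The CW-triad hypothesis together with local triviality of $\Pb$ guarantees that the relevant inclusions are closed cofibrations, so the strict pushouts are in fact homotopy pushouts. These squares are compatible with the structure maps and $\Sigma_n$-actions, producing a homotopy pushout of symmetric spectra.

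The main technical obstacle will be the cofibrancy required to pass from a strict pushout to a homotopy pushout, both at the level of the principal bundle $\Pb$ and after passing to the bundles $\KUbun{D}{n}$ and their zero sections; this is precisely what the excisive CW-triad hypothesis is there to supply. Once this is in place, the remainder of the argument is formal: the standard long exact sequence of a homotopy cocartesian square of spectra delivers the Mayer-Vietoris sequence in the form stated, with $\partial$ defined as the boundary map of the induced cofiber sequence.
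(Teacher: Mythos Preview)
Your proposal is correct and shares the same overall architecture as the paper's proof: establish that the square of Thom spectra is a homotopy pushout, then invoke stability to pass to a homotopy pullback and read off the Mayer-Vietoris sequence from the resulting long exact sequence of homotopy groups.

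Where you differ is in the mechanism used to verify the pushout. The paper works entirely in the $\infty$-categorical setting: it regards $Mf_{\bullet}$ as the colimit of the composite $X \to KU^D\text{-Line} \to KU^D\text{-Mod}$ and appeals to Lurie's general transitivity result for colimits (\cite[Prop.~4.2.3.8, Cor.~4.2.3.10]{book:Lurie}) to conclude that the colimit over $X$ decomposes as the pushout of the colimits over $A$, $B$ and $A\cap B$. Stability of $KU^D\text{-Mod}$ then turns the pushout into a pullback. Your first argument instead uses the identification of $Mf_{\bullet}$ with the derived smash product $\Sigma^{\infty}\Pb_+ \wedge_{\Sigma^{\infty}G_+} \KUmod{D}{\bullet}$, together with the fact that $\Pb$ is a pushout of free $G$-CW-complexes along $G$-cofibrations; your alternative argument checks the pushout levelwise on Thom spaces. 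Both of your routes are more elementary in that they stay within model-categorical language and avoid invoking the $\infty$-categorical colimit machinery; the paper's route is slicker once one accepts that machinery, since the decomposition is a one-line citation. Either way the cofibrancy issues you flag are exactly the point, and the excisive CW-triad hypothesis handles them as you say.
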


\begin{proof} \label{pf:Mayer-Vietoris}
Let $I$ be the pushout diagram category and let $\mathcal{S}_{/X}$ be the $\infty$-category of spaces over $X$. The pushout diagram 
$
	\xymatrix{
		A & \ar[l] A \cap B \ar[r] & B
	}
$
defines a functor $F \colon I \to \mathcal{S}_{/X}$, since each of the spaces comes equipped with a map to $X$. Let $f \colon X \to KU^D{\rm -Line} \to KU^D{\rm -Mod}$ be the diagram in the $\infty$-categorical definition of $Mf_{\bullet}$. The colimits of $F(i) \to X \to KU^D{\rm -Mod}$ correspond to $M\!\left(\left.f\right|_{A \cap B}\right)_{\bullet}$, $M\!\left(\left.f\right|_A\right)_{\bullet}$ and $M\!\left(\left.f\right|_B\right)_{\bullet}$ respectively. The category $I$ is partially ordered in such a way that $F$ is an order-preserving map from $I$ to the collection of simplicial subsets of $X$. Using \cite[Rem.\ 4.2.3.9]{book:Lurie} we see that the conditions of \cite[Prop.\ 4.2.3.8]{book:Lurie} are satisfied. Thus, \cite[Cor.\ 4.2.3.10]{book:Lurie} yields that 
\[
	\xymatrix{
		M\!\left(\left.f\right|_{A \cap B}\right)_{\bullet} \ar[r] \ar[d] & M\!\left(\left.f\right|_{A}\right)_{\bullet} \ar[d] \\
		M\!\left(\left.f\right|_{B}\right)_{\bullet} \ar[r] & Mf_{\bullet}
	}
\]
is an $\infty$-categorical pushout diagram in $KU^D$-Mod. By stability of the latter, this is also a pullback diagram in $KU^D$-Mod. The associated long exact sequence of homotopy groups is the stated Mayer-Vietoris sequence.
\end{proof}

\subsection{Analytic twisted $K$-homology}
Let $X$ be a locally compact space, let $\Pb \to X$ be a principal $\Aut{D \otimes \K}$-bundle and denote the associated bundle of $C^*$-algebras by $\mathcal{A}$. We will use Kasparov's representable bivariant $K$-theory, which was defined in \cite{paper:Kasparov}. Analogous to the twisted $K$-theory functor, let
\begin{align*}
RK^*_{\Pb}(X) &= \RKK{X}{C_0(X) \otimes D}{C_0(X, \mathcal{A})}{\ast} \\
RK_*^{\Pb}(X) &= \RKK{X}{C_0(X,\mathcal{A})}{C_0(X) \otimes D}{-\ast} \\
K_*^{\Pb}(X, Y) &= KK_{-*}(C_0(X \setminus Y,\left.\mathcal{A}\right|_{X \setminus Y}), D) 
\end{align*}
To compare analytic with topological twisted $K$-homology, we need a version of Poincar\'{e} duality. Let $M$ be a smooth compact spin$^c$ manifold with (possibly empty) boundary $\partial M$. Let $\Pb \to M$ be a principal $\Aut{D \otimes \K}$-bundle with associated bundle of $C^*$-algebras $\mathcal{A}$. Let $M^{\circ} = M \setminus \partial M$. The spin$^c$-condition implies that $M$ has a fundamental class $[M, \partial M] \in K_{\dim(M)}(M, \partial M)$. For a compact space $X$ we have an isomorphism $K^*_{\Pb}(X) \cong RK^*_{\Pb}(X)$. Given two continuous $C_0(X)$-algebras $A$ and $B$ the group $\RKK{X}{A}{B}{*}$ maps naturally to $KK_*(A,B)$ by forgetting the additional assumptions on the cycles in representable $KK$-theory. It was shown in \cite[Thm.\ 3.8]{paper:DadarlatP1} that the isomorphism classes of locally trivial principal $\Aut{D \otimes \K}$-bundles form an abelian group with respect to the tensor product. Thus, there is a bundle $\mathcal{A}^{-} \to M$ with the property that $C(M,\mathcal{A}) \otimes_{C(M)} C(M, \mathcal{A}^{-}) \cong C(M, D \otimes \K)$. This yields a class $[\mathcal{A}^{-}] \in \RKK{M}{C(M,\mathcal{A}^{-})}{C(M,\mathcal{A}^{-})}{0}$ and via $x \mapsto x \otimes [\mathcal{A}^{-}]$ an isomorphism
\[
	RK^*_{\Pb}(M) = \RKK{M}{C(M) \otimes D}{C(M,\mathcal{A})}{\ast} \to \RKK{M}{C(M, \mathcal{A}^{-})}{C(M) \otimes D}{\ast}
\]
Let $i \colon M^{\circ} \to M$ be the inclusion and denote by $i^*$ the induced homomorphism on $\mathcal{R}KK_*$. By \cite[Prop.\ 4.9]{paper:EEK} the homomorphism 
\[
	i^* \colon \RKK{M}{C(M, \mathcal{A}^{-})}{C(M) \otimes D}{\ast} \to \RKK{M^{\circ}}{C_0(M^\circ, \mathcal{A}^{-})}{C_0(M^{\circ}) \otimes D}{\ast}
\]
is an isomorphism. Combining the above observations with the map to $KK$ we obtain $\iota_{(M,\partial M)}$:
\[
\xymatrix{
	 K^*_{\Pb}(M) \ar[r]^-{\cong} & \RKK{M^{\circ}}{C_0(M^{\circ}, \mathcal{A}^{-})}{C_0(M^{\circ}) \otimes D}{\ast} \ar[r] & KK_*(C_0(M^{\circ},\mathcal{A}^{-}), C_0(M^{\circ}) \otimes D)
}\ .
\]
We define the Poincar\'{e} duality homomorphism by mapping the last group to $K_{\dim(M) - *}^{\Pb^{-}}(M, \partial M)$:  
\begin{equation} \label{eqn:PD}
	PD_{*} \colon K^*_{\Pb}(M) \to K_{\dim(M) - *}^{\Pb^{-}}(M, \partial M) \quad ; \quad y \mapsto \iota_{(M,\partial M)}(y) \otimes [M, \partial M]\ .
\end{equation}
The boundary homomorphism $\partial \colon K^{\Pb^{-}}_{\dim{M} - *}(M, \partial M) \to K^{\Pb^{-}}_{\dim(\partial M) - *}(\partial M)$ is given by an intersection product with the class $[\partial] \in KK_1(C(\partial M, \left.\mathcal{A}^{-}\right|_{\partial M}), C_0(M^{\circ},\mathcal{A}^{-}))$, which represents the short exact sequence
\[
	0 \to C_0(M^{\circ}, \mathcal{A}^{-}) \to C(M, \mathcal{A}^{-}) \to C(\partial M, \left.\mathcal{A}^{-}\right|_{\partial M}) \to 0
\]

\begin{lemma} \label{lem:partial1}
Let $j \colon \partial M \to M$ be the inclusion, then we have 
\[
	\partial \otimes \iota_{(M, \partial M)}(x) = (-1)^{\deg(x)}\,\iota_{\partial M}(j^*x) \otimes \partial\ .
\]
\end{lemma}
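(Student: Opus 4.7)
The plan is to interpret the identity as an instance of the graded naturality of the $KK$-theoretic boundary map with respect to morphisms of short exact sequences. Two extensions are in play. On the twisted side, the sequence $0 \to C_0(M^\circ, \mathcal{A}^-) \to C(M, \mathcal{A}^-) \to C(\partial M, \left.\mathcal{A}^-\right|_{\partial M}) \to 0$ produces the class $[\partial]$ appearing on the left hand side. The $\partial$ on the right hand side must, for reasons of matching algebras in the Kasparov product, be the (notationally overloaded) boundary class of the trivially twisted sequence $0 \to C_0(M^\circ) \otimes D \to C(M) \otimes D \to C(\partial M) \otimes D \to 0$. Both live in odd $KK$-degree, and a quick type-check shows that the two sides of the proposed equality do lie in the common group $KK_{*+1}(C(\partial M, \left.\mathcal{A}^-\right|_{\partial M}), C_0(M^\circ) \otimes D)$.

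First I would replace $x \in K^*_{\Pb}(M)$ by $\tilde x = x \otimes [\mathcal{A}^-] \in \RKK{M}{C(M, \mathcal{A}^-)}{C(M) \otimes D}{*}$, which is exactly the first step already built into the construction of $\iota_{(M,\partial M)}$. Because $\tilde x$ is representable over $M$, any Kasparov bimodule representing it carries a central $C(M)$-action, and therefore restricts compatibly to the interior $M^\circ$ and to the boundary $\partial M$. This gives a morphism of short exact sequences in the $KK$-category whose three constituent classes are, after the natural identifications, $\iota_{(M,\partial M)}(x)$ on the ideals, $\tilde x$ itself on the total algebras, and $\iota_{\partial M}(j^*x)$ on the quotients.

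Next I would invoke the standard fact that such a morphism of extensions intertwines the two boundary classes up to the Koszul sign arising from graded commutativity with the odd-degree class $[\partial]$: whenever $\gamma \in KK(A,B)$ restricts to $\gamma|_I \in KK(I,J)$ and descends to $\bar\gamma \in KK(A/I,B/J)$, one has $[\partial_A] \otimes \gamma|_I = (-1)^{\deg \gamma}\, \bar\gamma \otimes [\partial_B]$ in $KK_{*+1}(A/I, J)$. Applied to $\gamma = \tilde x$ of degree $\deg(x)$ this is exactly the stated equality.

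The main technical obstacle, and really the only non-formal step, is to verify that the ideal restriction of $\tilde x$ in unrestricted $KK$ genuinely equals $\iota_{(M,\partial M)}(x)$ and that its quotient restriction equals $\iota_{\partial M}(j^*x)$. This reduces to naturality of the Poincar\'e-dualising factor $[\mathcal{A}^-]$ under the inclusions $i \colon M^\circ \to M$ and $j \colon \partial M \to M$, namely $j^*(x \otimes [\mathcal{A}^-]) = (j^*x) \otimes [\left.\mathcal{A}^-\right|_{\partial M}]$ and analogously for $i$, together with compatibility of the forgetful map $\mathcal{R}KK \to KK$ with these pullbacks. Both follow from the fact that $[\mathcal{A}^-]$ is represented by a $C(M)$-central Hilbert bimodule whose fibrewise structure is preserved by restriction. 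Once this compatibility is in place, the sign $(-1)^{\deg x}$ emerges automatically from the Kasparov product rules and the identity is proved.
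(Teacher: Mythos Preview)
Your argument is correct and follows the same conceptual line as the paper: the identity is an instance of the graded naturality of the $KK$-theoretic boundary class with respect to a morphism of extensions induced by $\tilde x = x \otimes [\mathcal{A}^-]$, and the sign is the Koszul sign from commuting an odd class past $\tilde x$.

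The paper's proof differs only in how it sets up this morphism of extensions. Rather than appealing directly to the $C(M)$-centrality of a representable cycle to obtain the restrictions to $M^\circ$ and $\partial M$, the paper rewrites $\iota_{(M,\partial M)}(x)$ as the composite
\[
K^*_{\Pb}(M) \xrightarrow{\ \otimes\,[\mathcal{A}^-]\ } KK_*\bigl(C_0(M^\circ,\mathcal{A}^-),\,C(M,\mathcal{A})\otimes C_0(M^\circ,\mathcal{A}^-)\bigr) \xrightarrow{\ \mu\ } KK_*\bigl(C_0(M^\circ,\mathcal{A}^-),\,C_0(M^\circ)\otimes D\bigr),
\]
where the first arrow is the external tensor product with the identity class on $C_0(M^\circ,\mathcal{A}^-)$ and $\mu$ is induced by the diagonal inclusion $M^\circ \to M \times M^\circ$ together with the trivialization $\mathcal{A}\otimes\mathcal{A}^-\cong D\otimes\K$. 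From this description the ideal and quotient restrictions of the resulting class are manifestly $\iota_{(M,\partial M)}(x)$ and $\iota_{\partial M}(j^*x)$, and the paper then cites \cite[Lem.~B.8]{paper:BOSW} for the remaining sign computation. In other words, the paper's reformulation packages into a single formula exactly the compatibility check you identify as the only non-formal step, before deferring to the cited lemma; your route handles that step directly via the $\mathcal{R}KK$ structure. Both arrive at the same place.
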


\begin{proof}
There is a second way to obtain the map $\iota_{(M,\partial M)}$, namely via
\[
\xymatrix@C=0.8cm{
	K^*_{\Pb}(M) \ar[r]^-{\otimes [\mathcal{A}^{-}]}& KK_*(C_0(M^{\circ}, \mathcal{A}^-), C(M, \mathcal{A}) \otimes C_0(M^{\circ}, \mathcal{A}^{-})) \ar[r]^-{\mu} & KK_*(C_0(M, \mathcal{A}^{-}), C_0(M^{\circ}) \otimes D)
}
\]
where $\mu$ is induced by the diagonal inclusion $M^{\circ} \to M \times M^{\circ}$ and the trivializing isomorphism $C_0(M^{\circ}, \mathcal{A} \otimes \mathcal{A}^{-}) \cong C_0(M^{\circ}, D \otimes \K)$. From here the proof proceeds just as in \cite[Lem.\ B.8]{paper:BOSW}
\end{proof}

\begin{corollary} \label{cor:partial2}
Let $j \colon \partial M \to M$ be the inclusion, then the following diagram commutes up to a sign, which is given by $(-1)^{\deg(x)}$ for $x \in K^*_{\Pb}(M)$
\[
	\xymatrix@C=2cm{
		K^*_{\Pb}(M) \ar[r]^-{j^*} \ar[d]_-{PD_*} & K^*_{\Pb}(\partial M) \ar[d]^-{PD_*}\\
		K_{\dim(M) - *}^{\Pb^{-}}(M, \partial M) \ar[r]_-{\partial} & K_{\dim(\partial M) - *}^{\Pb^-}(\partial M)
	}
\]
\end{corollary}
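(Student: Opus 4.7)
The plan is to reduce the corollary directly to Lemma~\ref{lem:partial1} by unfolding all definitions into a single Kasparov-product expression and then rebracketing. Starting from $x \in K^*_\Pb(M)$, and using that $\partial$ is intersection with $[\partial]$ while $PD_*$ is intersection with $[M,\partial M]$ composed with $\iota_{(M,\partial M)}$, the clockwise composite around the square becomes
\[
    \partial(PD_*(x)) \;=\; [\partial] \otimes_{C_0(M^\circ,\mathcal{A}^-)} \iota_{(M,\partial M)}(x) \otimes_{C_0(M^\circ)\otimes D} [M,\partial M],
\]
which is well-defined and associative.

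Next I would apply Lemma~\ref{lem:partial1} to the two leftmost factors to rewrite this as
\[
    (-1)^{\deg x}\,\iota_{\partial M}(j^*x) \otimes_{C(\partial M,\mathcal{A}^-|_{\partial M})} \bigl([\partial] \otimes_{C_0(M^\circ,\mathcal{A}^-)} [M,\partial M]\bigr),
\]
so that everything reduces to identifying the parenthesised Kasparov product as $[\partial M]$. For that I would invoke the classical identity $[\partial] \otimes [M,\partial M] = [\partial M]$, which expresses that the $K$-homology boundary map associated to the extension $0 \to C_0(M^\circ,\mathcal{A}^-) \to C(M,\mathcal{A}^-) \to C(\partial M,\mathcal{A}^-|_{\partial M}) \to 0$ sends the spin$^c$ fundamental class of $(M,\partial M)$ to the spin$^c$ fundamental class of $\partial M$. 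Substituting gives $(-1)^{\deg x}\,\iota_{\partial M}(j^*x) \otimes [\partial M] = (-1)^{\deg x}\,PD_*(j^*x)$, which is exactly the claimed commutativity up to the sign $(-1)^{\deg x}$.

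The main obstacle is verifying the fundamental-class identity $[\partial] \otimes [M,\partial M] = [\partial M]$ in the present twisted setting. In the untwisted case it is a classical fact, most cleanly proved by representing $[M,\partial M]$ via the spin$^c$ Dirac operator on $M$ with APS-type boundary conditions and checking that its boundary cycle is the Dirac operator on $\partial M$ with the induced spin$^c$ structure. Twisting by $\mathcal{A}^-$ alters neither the analytic content of that argument nor the form of the extension defining $[\partial]$, because $\mathcal{A}^-|_{\partial M}$ is by construction the restriction of $\mathcal{A}^-$ to the boundary; the untwisted identity therefore carries over verbatim, exactly paralleling the computation in \cite{paper:BOSW} that was already used in the proof of Lemma~\ref{lem:partial1}.
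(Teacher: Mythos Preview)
Your argument is essentially the paper's: apply Lemma~\ref{lem:partial1}, then use the untwisted identity $[\partial]\otimes[M,\partial M]=[\partial M]$, which is precisely \cite[Lem.~B.10]{paper:BOSW}.

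One correction in your bookkeeping: after applying Lemma~\ref{lem:partial1}, the class $[\partial]$ appearing on the right-hand side is the \emph{untwisted} boundary class in $KK_1(C(\partial M),C_0(M^\circ))$ (coming from $0\to C_0(M^\circ)\to C(M)\to C(\partial M)\to 0$), not the twisted one. Indeed $\iota_{\partial M}(j^*x)$ lands in $KK_*(C(\partial M,\mathcal{A}^-|_{\partial M}),C(\partial M)\otimes D)$, so the subsequent products are over $C(\partial M)$ and $C_0(M^\circ)$, not over the $\mathcal{A}^-$-twisted algebras as you wrote. Consequently the identity $[\partial]\otimes[M,\partial M]=[\partial M]$ you need is literally the untwisted spin$^c$ statement, and your last paragraph about extending it to the twisted setting is unnecessary: Lemma~\ref{lem:partial1} has already absorbed all the twisting.
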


\begin{proof} \label{pf:partial2}
This is a direct consequence of Lemma~\ref{lem:partial1} and \cite[Lem.\ B.10]{paper:BOSW}.
\end{proof}

Fix a compact Hausdorff space $X$ together with a principal $\Aut{D \otimes \K}$-bundle $\Pb \to X$ pulled back via $f \colon X \to B\Aut{D \otimes \K}$. Let $\widehat{\K}$ be the graded compact operators on a graded separable infinite-dimensional Hilbert space, let 
\[
	\Kbun{D} = E\Aut{D \otimes \K} \times_{\lambda} \hom_{\rm gr}(\Shat, D \otimes \K \otimes \widehat{\K})\ ,
\]
where $\lambda$ denotes the left action of $\Aut{D \otimes \K}$ on $D \otimes \K$. Let $M$ be a closed spin$^c$-manifold and let $g \colon M \to f^*\Kbun{D}$ be a continuous map to the total space of the bundle. Let $\bar{g} \colon M \to X$ be the map induced by $g$ and the bundle projection. Observe that $g$ yields a class $[E_g] \in K^0_{\bar{g}^*\Pb}(M)$. There is a homotopy associative multiplication map $m \colon f^*\Kbun{D} \wedge \hom_{\rm gr}(\Shat, \widehat{\K}) \to f^*\Kbun{D}$, which uses an isomorphism $\widehat{\K} \otimes \widehat{\K} \cong \widehat{\K}$ of graded $C^*$-algebras. In particular, the Bott element induces a weak equivalence $\Kbun{D} \to \Omega^2\Kbun{D}$ (where the loop space is taken fiberwise). The bundle $\Kbun{D}$ is the twisted analogue of the space $X \times \K$ in \cite[Sec.\ 8]{paper:BaumHigsonSchick}.

\begin{definition} \label{def:ana_ind}
For $(M,g)$ as described above, we define the \emph{analytic index} of the pair by ${\rm ind}_a(M,g) = \bar{g}_* PD_0([E_g]) \in K_{\dim(M)}^{\Pb^{-}}(X)$.
\end{definition}

\begin{theorem} \label{thm:bordism_invariance}
The analytic index only depends on the (spin$^c$) bordism class $[M,g] \in \Omega^{{\rm spin}^c}(f^*\Kbun{D})$.
\end{theorem}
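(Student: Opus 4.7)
My plan is to prove bordism invariance in the standard fashion: reduce to showing that the index vanishes when $(M,g)$ is null-bordant, using that $\mathrm{ind}_a$ is manifestly additive on disjoint unions. So suppose $W$ is a compact spin$^c$-manifold with $\partial W = M$ and $G \colon W \to f^*\Kbun{D}$ is a continuous extension of $g$. Write $\bar{G} \colon W \to X$ for the induced map on bases, $j \colon M \hookrightarrow W$ for the boundary inclusion, and set $\Pb_W = \bar{G}^*\Pb$, so that $j^*\Pb_W = \bar{g}^*\Pb$.

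The first step is the naturality $j^*[E_G] = [E_g]$ in $K^0_{\bar{g}^*\Pb}(M)$, which is immediate from the functoriality of the construction of $[E_{\cdot}]$ and the relation $g = G \circ j$. Next, apply Corollary~\ref{cor:partial2} to $[E_G] \in K^0_{\Pb_W}(W)$: Poincar\'e duality intertwines the restriction $j^*$ with the $KK$-theoretic boundary map, yielding
\[
	PD_0\bigl([E_g]\bigr) \;=\; PD_0\bigl(j^*[E_G]\bigr) \;=\; \pm\, \partial\bigl(PD_0([E_G])\bigr),
\]
where $PD_0([E_G]) \in K^{\Pb_W^-}_{\dim W}(W,\partial W)$ and $\partial$ is the boundary map coming from the extension
\[
	0 \to C_0(W^\circ,\mathcal{A}_W^-) \to C(W,\mathcal{A}_W^-) \to C(\partial W, \mathcal{A}_M^-) \to 0
\]
discussed just before Lemma~\ref{lem:partial1}.

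Now I push forward to $X$ using $\bar{g} = \bar{G} \circ j$. Functoriality of the pushforward in the analytic twisted $K$-homology groups (induced by the restriction of sections on the corresponding section algebras) gives $\bar{g}_* = \bar{G}_* \circ j_*$. Combined with exactness of the long exact sequence of the pair $(W,\partial W)$, which forces $j_* \circ \partial = 0$, this gives
\[
	\mathrm{ind}_a(M,g) \;=\; \bar{g}_* PD_0([E_g]) \;=\; \pm\, \bar{G}_* \, j_* \,\partial\bigl(PD_0([E_G])\bigr) \;=\; 0.
\]

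I expect the conceptual steps above to be the heart of the argument, so the main obstacle is bookkeeping in the twisted setting: verifying that the dual bundle $\Pb_W^-$ over $W$ really does restrict to $\Pb_M^-$ under $j$ (so that $PD$ on $W$ and on $M$ land in compatible groups), that Corollary~\ref{cor:partial2} applies verbatim in the present twisted formulation, and that the naturality of $[E_{\cdot}]$ survives the graded stabilization implicit in the definition of $\Kbun{D}$. Once these compatibilities are in place, the argument is a twisted incarnation of the classical ``boundaries die'' proof of bordism invariance of indices.
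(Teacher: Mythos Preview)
Your argument is correct and follows the same overall strategy as the paper: use Corollary~\ref{cor:partial2} to rewrite $PD_0([E_g])$ as $\partial(PD_0([E_G]))$ and then show this dies after pushing forward to $X$. The only difference is in how the vanishing is obtained. You factor $\bar g_* = \bar G_* \circ j_*$ and invoke $j_*\circ\partial = 0$ from the long exact sequence of the pair $(W,\partial W)$; the paper instead uses naturality of $\partial$ for the map of pairs $\bar h \colon (W,\partial W)\to (X,X)$ together with the trivial observation $K^{\Pb^-}_{\dim W}(X,X)=0$, working directly with a bordism $\partial W = M_1 \amalg -M_2$ rather than first reducing to the null-bordant case. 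These are equivalent one-line diagram chases, so the two proofs are essentially the same.
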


\begin{proof} \label{pf:bordism_invariance}
Let $(W, h)$ be a spin$^c$ manifold with boundary $\partial W = M_1 \amalg -M_2$ together with a continuous map $h \colon W \to f^*\Kbun{D}$, which restricts to $g_i \colon M_i \to f^*\Kbun{D}$. Then we have $PD_0([E_h]) \in K_{\dim(W)}^{\bar{h}^*\Pb^{-}}(W, \partial W)$. The class $(\left.\bar{h}\right|_{\partial W})_* \partial(PD_0([E_h]))$ vanishes. This follows from the naturality of the boundary map by the commutativity of the following diagram:
\[
	\xymatrix{
		K_{\dim(W)}^{\bar{h}^*\Pb^{-}}(W, \partial W) \ar[d]_-{\bar{h}_*} \ar[r]^-{\partial} & K_{\dim(\partial W)}^{\bar{h}^*\Pb^{-}}(\partial W) \ar[d]^-{(\left.\bar{h}\right|_{\partial W})_*}\\
		0 = K_{\dim(W)}^{\Pb^{-}}(X, X) \ar[r]_-{\partial} & K_{\dim(\partial W)}^{\Pb^{-}}(X)
	}
\]
By Cor.\ \ref{cor:partial2}, we have $0 = (\left.\bar{h}\right|_{\partial W})_* \partial(PD_0([E_h])) = (\left.\bar{h}\right|_{\partial W})_* PD_0(j^*([E_h])) = {\rm ind}_a(M_1,g_1) - {\rm ind}_a(M_2,g_2)$, which proves the statement.
\end{proof}

\subsection{Comparison of topological and analytic twisted $K$-homology}
In this section we prove that the topological twisted $K$-homology of a finite CW-complex $X$ is isomorphic to its analytic counterpart. To achieve this we extend the idea of \cite{paper:BaumHigsonSchick} to the twisted case, i.e.\ we construct an intermediate twisted homology theory, which maps to the topological and the analytic one. 

As we have seen in the definition of the analytic index map above, the twist changes to its inverse under Poincar\'{e} duality. Since this will play a central role in the natural transformations between the theories we are going to construct, we need a functorial way of inverting twists. This is not a problem in \cite{paper:ABGH, paper:Ando}, where it suffices to have an endofunctor of the $\infty$-category $KU^D$-Line. Instead of rephrasing the whole theory in that framework, we simply build the inverse into the category of spaces under consideration. 

Let $\Pb$ and $\Pb'$ be principal $\Aut{D \otimes \K}$-bundles over the same base space $X$. The tensor product of automorphisms induces a group homomorphism $\kappa \colon \Aut{D \otimes \K} \times \Aut{D \otimes \K} \to \Aut{(D \otimes \K)^{\otimes 2}}$ and we define
\[
	\Pb \otimes \Pb' = (\Pb \times_M \Pb') \times_{\kappa} \Aut{(D \otimes \K)^{\otimes 2}}
\] 
If $\mathcal{A}$ and $\mathcal{A}'$ are the corresponding $C^*$-algebra bundles, then $\mathcal{A} \otimes \mathcal{A}'$ is associated to $\Pb \otimes \Pb'$.

\begin{definition} \label{def:cat_CWD}
Let $\CW{D}$ be the following category: The objects are tuples $(X,f,f^-, \tau)$, where $X$ is a finite CW-complex, $f, f^- \colon X \to B\Aut{D \otimes \K}$ are continuous maps classifying bundles $\Pb_f = f^*E\Aut{D \otimes \K}$ and $\Pb_f^- = (f^-)^*E\Aut{D \otimes \K}$ and $\tau \colon \Pb_f \otimes \Pb_f^- \to X \times \Aut{(D \otimes \K)^{\otimes 2}}$ is a trivialization. A morphism $(X,f,f^-,\tau) \to (X', f', f^{'-}, \tau')$ is tuple $(\varphi, \hat{\varphi}, \hat{\varphi}^-, \rho)$, where $\varphi \colon X \to X'$ is a continuous map and $\hat{\varphi} \colon \Pb_f \to \Pb_{f'}$, $\hat{\varphi}^- \colon \Pb_{f}^- \to \Pb_{f'}^-$ are maps of principal $\Aut{D \otimes \K}$-bundles covering $\varphi$ and $\rho \colon X \to \Aut{(D \otimes \K)^{\otimes 2}}$ is a change of trivialization, which fits into the commutative diagram
\[
	\xymatrix{
		\Pb_f \otimes \Pb_f^- \ar[d]_-{\hat{\varphi} \,\otimes\, \hat{\varphi}^-} \ar[r]^-{\tau} & X \times \Aut{(D \otimes \K)^{\otimes 2}} \ar[d]^-{\lambda_{\rho}} \\
		\Pb_{f'} \otimes \Pb_{f'}^- \ar[r]^-{\tau'} & X \times \Aut{(D \otimes \K)^{\otimes 2}} \\
	}
\]
where $\lambda$ denotes the left multiplication. If the bundles are clear, we will sometimes abbreviate the tuple $(X,f,f^-,\tau)$ by $X$ and a morphism $(\varphi, \hat{\varphi}, \hat{\varphi}^-,\rho)$ by $\varphi$. Let $X,Y \in {\rm Obj}(\CW{D})$ and let $\varphi, \varphi' \colon X \to Y$ be two morphisms. A \emph{homotopy} between $\varphi$ and $\varphi'$ is a homotopy $H \colon X \times I \to Y$ between $\varphi$ and $\varphi'$ that preserves all the further structure in the sense that $H$ is covered by homotopies of principal bundle maps $\hat{H} \colon \Pb_f \times I \to \Pb_{f'}$ and $\hat{H}^- \colon \Pb_f^- \times I \to \Pb_{f'}^-$ and a corresponding homotopy of the changes of trivializations. 
\end{definition}

\begin{remark} \label{rem:heq}
Given a CW-complex $X$, an object $Y \in {\rm Obj}(\CW{D})$ and a map $\varphi \colon X \to Y$ all extra data can be pulled back to $X$ along $\varphi$. In particular, a subcomplex $A \subset X$ can be extended to an object $A \in {\rm Obj}(\CW{D})$.

If $\varphi \colon X \to Y$ as above is a homotopy equivalence with homotopy inverse $\psi \colon Y \to X$, then we can find maps of principal bundles $\hat{\varphi} \colon \varphi^*\Pb_f \to \Pb_f$ and $\hat{\psi} \colon \Pb_f \to \varphi^*\Pb_f$ covering $\varphi$ and $\psi$ respectively. This can be done in such a way that $\hat{\varphi} \circ \hat{\psi}$ and $\hat{\psi} \circ \hat{\varphi}$ are both homotopic to automorphisms of principal bundles covering the identity. A corresponding argument for the trivializations yields the following: Given an object $(Y,f,f^-,\tau) \in {\rm Obj}(\CW{D})$, a CW-complex $X$ and a homotopy equivalence $\varphi \colon X \to Y$, there are morphisms $X \to Y$ and $Y \to X$ in $\CW{D}$, such that both compositions are homotopic to automorphisms of $(X, f \circ \varphi, f^- \circ \varphi, \tau \circ \varphi)$ and $(Y,f,f^-, \tau)$, respectively. In particular, any homotopy invariant functor will map $(\varphi, \hat{\varphi}, \hat{\varphi}^-, \rho)$ to an isomorphism.
\end{remark}

\begin{theorem} \label{thm:det_by_coeff}
Let $h_{*}, h'_* \colon \CW{D} \to {\rm GrAb}$ be two functors to the category of graded abelian groups with the following properties:
\begin{enumerate}[(i)]
	\item \emph{homotopy invariance}: If $\varphi, \varphi' \colon X \to Y$ are morphisms such that $\varphi$ is homotopic to $\varphi'$ in the sense of Def.\ \ref{def:cat_CWD}, then $h_k(\varphi) = h_k(\varphi')$.
	\item \emph{Mayer-Vietoris sequence}: If $X \in {\rm Obj}(\CW{D})$ and $A \subset X$, $B \subset X$ are subcomplexes with $X = A \cup B$. Then there is a natural transformation $\partial \colon h_*(X) \to h_{*-1}(A \cap B)$ and the following sequence, in which the unlabeled arrows are induced by inclusions as in the ordinary Mayer-Vietoris sequence, is exact:
	\[
		\xymatrix@C=0.7cm{
			\ar[r] & h_k(A \cap B) \ar[r] & h_k(A) \oplus h_k(B) \ar[r] & h_k(X) \ar[r]^-{\partial} & h_{k-1}(A \cap B) \ar[r] &
		}
	\]
\end{enumerate}
Let $\eta \colon h_* \to h'_*$ be a natural transformation. If $\eta_{*} \colon h(*) \to h'(*)$ is an isomorphism of graded abelian groups for all objects $\ast \in {\rm Obj}(\CW{D})$ that have the one-point space as underlying CW-complex. Then $\eta_{X}$ is an isomorphism for all objects $X \in {\rm Obj}(\CW{D})$.
\end{theorem}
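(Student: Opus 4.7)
The plan is a standard CW-induction, carried out inside the enriched category $\CW{D}$: start from the point case (the hypothesis), bootstrap to spheres by induction on dimension, and finally handle arbitrary finite CW-complexes by induction on the number of cells. The engine throughout is the Five Lemma applied to the Mayer-Vietoris long exact sequences of axiom (ii), while the bookkeeping of the bundle and trivialization data under all the operations used below is supplied by Remark~\ref{rem:heq}. As a preliminary I would dispose of finite disjoint unions: for $X = A \sqcup B$ with $A, B$ subcomplexes one has $A \cap B = \emptyset$, and one first notes that $\eta_{\emptyset}$ is an isomorphism. In the examples of interest $h_*(\emptyset) = h_*'(\emptyset) = 0$, as is standard for generalized homology theories and visible directly in the twisted $K$-homology theories we intend to compare. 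With this in hand, the Mayer-Vietoris sequences for $X = A \sqcup B$ and naturality of $\eta$ reduce the assertion for $X$ to those for $A$ and $B$ via the Five Lemma.

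Next I would induct on dimension to establish the claim for spheres $S^n$. The case $S^0$ follows from additivity. For $n \geq 1$, decompose $S^n = D_+^n \cup D_-^n$ as closed hemispheres. Both hemispheres are contractible, and by Remark~\ref{rem:heq} any object of $\CW{D}$ with a contractible underlying space is mapped by $h_*$ and $h_*'$ isomorphically onto the values on a point object; the hypothesis then gives $\eta$ iso on each hemisphere. Since $D_+^n \cap D_-^n = S^{n-1}$, the inductive hypothesis on $S^{n-1}$ together with the Five Lemma applied to the Mayer-Vietoris sequences for $S^n$ yields that $\eta_{S^n}$ is an isomorphism.

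Finally I would induct on the number of cells of an object $(X, f, f^-, \tau)$. Pick a top-dimensional cell of dimension $k$, attached via some $\phi \colon S^{k-1} \to X'$, where $X' \subset X$ is the subcomplex obtained by removing the interior of that cell. To produce two subcomplexes whose union is $X$ and whose intersection is homotopy equivalent to $S^{k-1}$, I would replace $X$ by the homotopy equivalent double mapping cylinder
\[
\widetilde{X} = X' \cup_\phi (S^{k-1} \times I) \cup_{\mathrm{id} \times 1} D^k,
\]
endowed with the evident CW-structure, and set $A = X' \cup (S^{k-1} \times [0, 1/2])$, $B = (S^{k-1} \times [1/2, 1]) \cup D^k$, so that $A \simeq X'$, $B \simeq \{pt\}$, and $A \cap B \simeq S^{k-1}$. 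Remark~\ref{rem:heq} lifts the tuple $(f, f^-, \tau)$ along the homotopy equivalence $\widetilde{X} \to X$ and restricts it compatibly to $A$, $B$, and $A \cap B$; the inductive hypotheses on the number of cells and on spheres give that $\eta$ is an isomorphism on each of these three pieces; and the Five Lemma applied to Mayer-Vietoris closes the induction.

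The real obstacle is bookkeeping rather than geometry: one has to verify that the bundle data $f$, its inverse $f^-$, and the trivialization $\tau$ all behave functorially under the restrictions to subcomplexes, under the homotopy equivalences with mapping cylinders, and under the hemisphere decompositions, so that naturality of $\eta$ makes each Five Lemma square commute. This is precisely the content of Remark~\ref{rem:heq} combined with axiom (i): any homotopy equivalence of underlying CW-complexes lifts to a morphism in $\CW{D}$ that any homotopy-invariant functor sends to an isomorphism. Once this lifting is in place the induction goes through cleanly.
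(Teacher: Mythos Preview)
Your argument is correct. Both you and the paper use the Five Lemma against Mayer--Vietoris, together with Remark~\ref{rem:heq} to transport bundle data along homotopy equivalences, but you organize the induction differently. The paper first replaces $X$ by a homotopy equivalent finite simplicial complex and takes a \emph{good cover} $(U_i)_{i \in J}$ by subcomplexes (obtained by barycentric subdivision) in which every nonempty intersection is contractible; the induction is then on the number of sets in the cover, the point being that if $V = V_1 \cup \dots \cup V_n$ then both $V$ and $V \cap V_{n+1}$ again admit good covers of size at most $n$. Your route is the textbook CW-induction: a separate induction on dimension to handle spheres via hemispheres, followed by induction on the number of cells using a double mapping cylinder to produce honest subcomplexes. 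The paper's single induction is a bit more economical (no separate treatment of spheres), while your approach avoids the passage to simplicial complexes. Both proofs share the same minor loose end concerning $h_*(\emptyset)$, which you flag explicitly; the paper's good-cover induction implicitly faces the same issue whenever $V \cap V_{n+1} = \emptyset$.
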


\begin{proof} \label{pf:det_by_coeff}
Let $X \in {\rm Obj}(\CW{D})$. The underlying CW-complex of $X$ is homotopy equivalent to a finite simplicial complex. After barycentric subdivision there exists a cover by subcomplexes $(U_i)_{i \in J}$, such that for every $k \in \N$ each intersection $U_{i_1} \cap \dots \cap U_{i_k}$ is either contractible or empty. (Such a cover will be called \emph{good} in the following.) Thus, we obtain a CW-complex $X'$, which has a good cover, and a homotopy equivalence $\varphi \colon X \to X'$ with homotopy inverse $\psi \colon X' \to X$. Using the pullback along $\psi$, we obtain $X' \in {\rm Obj}(\CW{D})$. The composition $\varphi \circ \psi$ is homotopic to an automorphism of $X' \in {\rm Obj}(\CW{D})$ and likewise for $\psi \circ \varphi$. Therefore $h(\varphi)$ is an isomorphism and it suffices to show the statement for objects, which allow good covers.

Suppose that we have shown that $\eta_{Y}$ is an isomorphism for those $Y \in {\rm Obj}(\CW{D})$, which has a good cover by $n$ subcomplexes. This holds for $n = 1$ by homotopy invariance and the assumption on $\eta$. Let $Y' \in {\rm Obj}(\CW{D})$ be an object which has a good cover by subcomplexes $V_1, \dots, V_{n+1}$, let $V = V_1 \cup \dots \cup V_n$ and note that $V$ and $V \cap V_{n+1}$ satisfy the induction hypothesis. The five lemma applied to the following comparison diagram of Mayer-Vietoris sequences 
\[
	\xymatrix@C=0.5cm{
		\ar[r] & h_k(V) \oplus h_k(V_{n+1}) \ar[r] \ar[d]^-{\cong} & h_k(V \cup V_{n+1}) \ar[r]^-{\partial} \ar[d]& h_{k-1}(V \cap V_{n+1}) \ar[r] \ar[d]_-{\cong} & \\
		\ar[r] & h'_k(V) \oplus h'_k(V_{n+1}) \ar[r] & h'_k(V \cup V_{n+1}) \ar[r]^-{\partial} & h'_{k-1}(V \cap V_{n+1}) \ar[r] & \\
	}
\]
proves the induction step and therefore the statement.
\end{proof}

The intermediate twisted homology theory alluded to in the introduction is defined as follows: Let $(X,f,f^-,\tau) \in {\rm Obj}(\CW{D})$ and consider the framed bordism group $\Omega^{\rm fr}_*((f^-)^*\Kbun{D})$. Given $(M,g \colon M \to (f^-)^*\Kbun{D}) \in \Omega^{\rm fr}_{n + 2k}((f^-)^*\Kbun{D})$, we obtain an element $(M \times S^2, g')$ (where the stable framing on $S^2$ is the one induced by $S^2 \subset \R^3$) in $\Omega^{\rm fr}_{n + 2(k+1)}((f^-)^*\Kbun{D})$, where $g'$ is given by
\[
	\xymatrix@C=1.5cm{
		g' \colon M \times S^2 \ar[r]^-{(g,b)} & (f^-)^*\Kbun{D} \times \hom_{\rm gr}(\Shat,\widehat{\K}) \ar[r]^-{m} & (f^-)^*\Kbun{D} 
	}
\]
and $b \colon S^2 \to \hom_{\rm gr}(\Shat, \widehat{\K})$ represents the Bott class.

\begin{definition} \label{def:intermediate}
Let $(X,f,f^-,\tau) \in {\rm Obj}(\CW{D})$, let $\Pb^- = \Pb_f^-$ and define	
\[
	k_n^{\Pb^-}(X) = \lim_{k} \Omega^{\rm fr}_{n + 2k}((f^-)^*\Kbun{D})\ .
\]
The direct limit is formed with respect to the maps $\Omega^{\rm fr}_{n+2k}((f^-)^*\KUbun{D}{2}) \to \Omega^{\rm fr}_{n+2(k+1)}((f^-)^*\KUbun{D}{2})$ described above. This is a functor $\CW{D} \to {\rm GrAb}$.
\end{definition}

\begin{lemma} \label{lem:satisfies_cond}
The functor $X \mapsto k_n^{\Pb^-}(X)$ satifies the conditions of Theorem \ref{thm:det_by_coeff}.
\end{lemma}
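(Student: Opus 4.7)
The plan is to verify the two conditions in Theorem~\ref{thm:det_by_coeff} by reducing each of them to the corresponding property of framed bordism of total spaces of the bundles $(f^-)^*\Kbun{D}$, and then observing that the property survives when passing to the filtered colimit over $k$ with the Bott stabilization maps.

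For homotopy invariance, let $\varphi, \varphi' \colon X \to Y$ be morphisms in $\CW{D}$ that are homotopic in the sense of Definition~\ref{def:cat_CWD}. The homotopy is covered in particular by a homotopy of principal bundle maps $\Pb_f^- \times I \to \Pb_{f'}^-$, which induces a continuous homotopy between the two associated maps of total spaces $(f^-)^*\Kbun{D} \to (f^{'-})^*\Kbun{D}$. Since framed bordism $\Omega^{\rm fr}_\ast$ is a homotopy invariant functor of topological spaces, the two induced homomorphisms on $\Omega^{\rm fr}_{n+2k}$ agree, and agreement is preserved by the colimit defining $k_n^{\Pb^-}$.

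For Mayer--Vietoris, let $X = A \cup B$ be a decomposition into subcomplexes. The restriction of $(f^-)^*\Kbun{D}$ to $A$, $B$ and $A \cap B$ yields total spaces whose union is $(f^-)^*\Kbun{D}$ and whose intersection is the restriction over $A \cap B$; in particular this is a CW-triad. Framed bordism is a generalized homology theory (it computes stable homotopy of the one-point compactification, equivalently of $Y_+$ smashed with the sphere spectrum), and therefore admits a Mayer--Vietoris long exact sequence for such a triad. Since filtered colimits of abelian groups are exact, the colimit over $k$ produces a long exact Mayer--Vietoris sequence for $k_\ast^{\Pb^-}$. The required naturality of the boundary map with respect to morphisms in $\CW{D}$ follows from the naturality of the Mayer--Vietoris boundary in framed bordism together with the functoriality of restriction of the bundles.

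The main point requiring care is that the stabilization map $\Omega^{\rm fr}_{n+2k}((f^-)^*\Kbun{D}) \to \Omega^{\rm fr}_{n+2(k+1)}((f^-)^*\Kbun{D})$ given by crossing with $S^2$ and post-composing with the Bott class under the fiberwise multiplication $m$ is natural with respect both to restriction to subcomplexes of $X$ and to the Mayer--Vietoris boundary homomorphism. Both checks reduce to the fact that the construction of $g'$ from $g$ in Definition~\ref{def:intermediate} takes place entirely in the fiber direction of the bundle $(f^-)^*\Kbun{D} \to X$ and is therefore compatible with any pullback along a map of base spaces, so the Bott stabilization commutes with the structure maps of the Mayer--Vietoris sequence. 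Once this compatibility is in hand the exact sequence and the homotopy invariance pass to the colimit without further difficulty.
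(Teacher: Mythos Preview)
Your homotopy-invariance argument and your discussion of passing to the colimit both match the paper's proof. The gap is in the Mayer--Vietoris step: you assert that the total spaces of $(f^-)^*\Kbun{D}$ over $A$, $B$, $A\cap B$ form a \emph{CW-triad}. They do not. The fibre $\hom_{\rm gr}(\Shat, D\otimes\K\otimes\widehat{\K})$ is an infinite-dimensional function space with no evident CW-structure, so the total spaces are not CW-complexes and the preimages of the closed subcomplexes $A,B$ are not subcomplexes. Nor are they open, so the triad of total spaces is not excisive in the naive sense either. Simply invoking ``framed bordism is a generalized homology theory, hence has Mayer--Vietoris for such a triad'' is therefore not justified as written.

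The paper handles exactly this point by passing to the double mapping cylinder $c(A,A\cap B,B)$, which is homotopy equivalent to $X$ and admits an \emph{open} cover $U,V$ with $U\simeq A$, $V\simeq B$, $U\cap V\simeq A\cap B$. Pulling $(f^-)^*\Kbun{D}$ back along the equivalence $\theta\colon c(A,A\cap B,B)\to X$ gives a bundle whose restrictions over $U$ and $V$ now form a genuine excisive (open) cover of the total space, so the Mayer--Vietoris sequence for framed bordism applies directly; one then uses Remark~\ref{rem:heq} and homotopy invariance to transport the sequence back to the original triad $(X;A,B)$. Your argument can be repaired along these lines, or alternatively by invoking Str{\o}m's result that the preimage of a closed Hurewicz cofibration under a Hurewicz fibration is again a cofibration---but either way the excision step needs an explicit justification, not the bare claim of a CW-triad.
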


\begin{proof} \label{pf:satisfies_cond}
Let $(X,f,f^-,\tau),(Y,g,g^-,\tau') \in {\rm Obj}(\CW{D})$. A homotopy $H \colon X \times I \to Y$ is by definition covered by principal bundle maps $\Pb^-_f \times I \to \Pb^-_g$ inducing a homotopy $(f^-)^*\Kbun{D} \times I \to (g^-)^*\Kbun{D}$. Thus, homotopy invariance follows from the homotopy invariance of framed bordism.

Let $X \in {\rm Obj}(\CW{D})$ and let $A,B$ be subcomplexes, such that $X = A \cup B$. The double mapping cylinder $c(A,A \cap B,B) = (A \amalg (A \cap B) \times I \amalg B) /\!\!\sim$ is homotopy equivalent to $X$ via $\theta \colon c(A, A \cap B,B) \to X$. It can be covered by open sets $U$ and $V$, such that $(c(A, A \cap B, B), U, V)$ is an excisive triad, $U$ is homotopy equivalent to $A$, $V$ to $B$ and $U \cap V$ to $A \cap B$. All equivalences are induced by $\theta$. Pulling back the bundle $\Pb^-$ to $c(A,A \cap B,B)$ yields an excisive cover of $\theta^*(f^-)^*\Kbun{D}$ by $\left.\theta^*(f^-)^*\Kbun{D}\right|_U$ and $\left.\theta^*(f^-)^*\Kbun{D}\right|_V$. The observation in Rem.\ \ref{rem:heq} yields that $\Omega^{\rm fr}_{n+2k}(\left.\theta^*(f^-)^*\Kbun{D}\right|_U) \to \Omega^{\rm fr}_{n+2k}(\left.(f^-)^*\Kbun{D}\right|_A)$ is an isomorphism and likewise for the spaces over $A \cap B$, $B$ and $X$. Since direct limits preserve exact sequences, we therefore obtain the Mayer-Vietoris sequence for $k_*^{\Pb^-}$ from the one for framed bordism.
\end{proof}

\begin{definition} \label{def:Mfgr}
Let $KU^{D, {\rm mod,gr}}_n = \hom_{\rm gr}(\Shat, D \otimes \K \otimes \widehat{\K} \otimes (\C\ell_1 \otimes D \otimes \K)^{\otimes n})$. Analogous to $\KUmod{D}{\bullet}$ there also are action maps $\alpha_{\bullet,\bullet}$ such that $(KU^{D, {\rm mod,gr}}_{\bullet}, \alpha_{\bullet,\bullet})$ is a symmetric module spectrum over $KU^D_{\bullet}$ with the same properties as $\KUmod{D}{\bullet}$ by a similar proof as for Thm.\ \ref{thm:KU_module}. Let $\mathscr{K}U^{D, {\rm gr}}_n = E\Aut{D \otimes \K} \times_{\lambda} KU^{D, {\rm mod,gr}}_n$, where the left action acts on the $D \otimes \K$-factor of the tensor product. The \emph{graded Thom spectrum} of a map $f \colon X \to B\Aut{D \otimes \K}$ is defined by
\[
	Mf_n^{\rm gr} = r_!f^*\mathscr{K}U^{D, {\rm gr}}_n\ .
\]
It is a symmetric module spectrum over $KU^D_{\bullet}$ with respect to the action maps induced by $\alpha_{\bullet,\bullet}$. Let $\Pb = f^*E\Aut{D \otimes \K}$. We define $K_n^{\Pb,{\rm top, gr}}(X) = \pi_n(Mf^{\rm gr}_{\bullet})$.
\end{definition}

Let $e \in \widehat{\K}$ be a rank $1$-projection in the even part of the algebra. We have a $C^*$-algebra homomorphism $D \otimes \K \to D \otimes \K \otimes \widehat{\K}$ given by $a \mapsto a \otimes e$. Since this stabilization induces an isomorphism on $K$-groups, we obtain a $\pi_*$-equivalence of spectra $\KUmod{D}{\bullet} \to KU_{\bullet}^{D,{\rm mod, gr}}$ and a corresponding map of symmetric $KU^D_{\bullet}$-module spectra $Mf_{\bullet} \to Mf^{\rm gr}_{\bullet}$.

\begin{lemma} \label{thm:Mf_is_Mfgr}
The map $Mf_{\bullet} \to Mf^{\rm gr}_{\bullet}$ induces a natural isomorphism of the associated homology theories 
$
	K_{\bullet}^{\Pb,{\rm top}} \to K_{\bullet}^{\Pb, {\rm top, gr}} .
$
In particular, it is a $\pi_*$-equivalence and therefore a stable one.
\end{lemma}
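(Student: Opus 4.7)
My plan is to deduce this from Theorem \ref{thm:det_by_coeff}, since both $K_{\bullet}^{\Pb,{\rm top}}$ and $K_{\bullet}^{\Pb,{\rm top,gr}}$ are naturally functors $\CW{D} \to {\rm GrAb}$ (a morphism in $\CW{D}$ covers the map $X \to X'$ by a map of principal bundles, which induces a map of the associated parametrized spectra and hence of their Thom spectra), and the comparison map $Mf_{\bullet} \to Mf^{\rm gr}_{\bullet}$ is natural in the same sense. It therefore suffices to check the two hypotheses of Theorem \ref{thm:det_by_coeff} for each of the two functors, together with the fact that the natural transformation is an isomorphism on every object of $\CW{D}$ whose underlying CW-complex is a point.

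First I would verify homotopy invariance. A homotopy in $\CW{D}$ between $\varphi, \varphi' \colon X \to Y$ is covered by a homotopy of principal $\Aut{D \otimes \K}$-bundles, which pulls back to a fiber homotopy equivalence of the associated bundles $f^*\KUbun{D}{n}$ and $f^*\mathscr{K}U^{D,{\rm gr}}_n$ over $X \times I$. Collapsing the zero section is a homotopy-invariant construction, so the induced maps on $Mf_{\bullet}$ and $Mf^{\rm gr}_{\bullet}$ are weak equivalences, giving the desired equality on $\pi_*$. Second I would establish Mayer-Vietoris. For $K_{\bullet}^{\Pb,{\rm top}}$ this is essentially Thm.\ \ref{thm:Mayer-Vietoris}: if $X = A \cup B$ with $A, B$ subcomplexes, then $(X, A, B)$ is an excisive CW-triad and we obtain the desired six-term sequence (after a homotopy equivalence to a finite CW-complex if needed, which is harmless by the homotopy invariance just verified). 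The same $\infty$-categorical argument used in the proof of Thm.\ \ref{thm:Mayer-Vietoris} applies verbatim to the graded Thom spectrum $Mf^{\rm gr}_{\bullet}$, since its construction via $r_!$ of a parametrized $KU^D$-module spectrum is formally identical; naturality of the boundary with respect to the map $Mf_{\bullet} \to Mf^{\rm gr}_{\bullet}$ then follows from naturality of the $\infty$-categorical pushout.

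Third I would check the isomorphism on coefficients. For an object $(*, f, f^-, \tau) \in {\rm Obj}(\CW{D})$ over the one-point space, every principal $\Aut{D \otimes \K}$-bundle is trivializable, so $Mf_{\bullet} \simeq \KUmod{D}{\bullet}$ and $Mf^{\rm gr}_{\bullet} \simeq KU^{D,{\rm mod,gr}}_{\bullet}$. The natural transformation reduces to the map induced by the $C^*$-algebra homomorphism $D \otimes \K \to D \otimes \K \otimes \widehat{\K}$, $a \mapsto a \otimes e$. On homotopy groups this yields the stabilization map $K_n(D) \to K_n(D \otimes \widehat{\K}) \cong K_n(D)$, which is an isomorphism by stability of $K$-theory. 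Invoking Theorem \ref{thm:det_by_coeff} then yields the natural isomorphism $K_{\bullet}^{\Pb,{\rm top}}(X) \cong K_{\bullet}^{\Pb,{\rm top,gr}}(X)$ for every $X \in {\rm Obj}(\CW{D})$. In particular, taking $X$ to be the finite CW-complex on which $f$ is defined gives $\pi_n(Mf_{\bullet}) \to \pi_n(Mf^{\rm gr}_{\bullet})$ is an isomorphism for all $n$, i.e.\ the map of symmetric $KU^D_{\bullet}$-module spectra $Mf_{\bullet} \to Mf^{\rm gr}_{\bullet}$ is a $\pi_*$-equivalence; because both spectra inherit semistability from $\KUmod{D}{\bullet}$ and $KU^{D,{\rm mod,gr}}_{\bullet}$, this is a stable equivalence.

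The main obstacle is the verification of the Mayer-Vietoris sequence for the graded version: one must check that the proof of Thm.\ \ref{thm:Mayer-Vietoris} (which uses the $\infty$-categorical description of the Thom spectrum from Thm.\ \ref{thm:Twisted_K_via_inf_cat}) carries over, and that the comparison map $Mf_{\bullet} \to Mf^{\rm gr}_{\bullet}$ respects the pushout/pullback structure. Since both spectra arise by the same $r_!$-construction applied to bundles built from symmetric $KU^D_{\bullet}$-module spectra with parallel formal properties, this is a matter of tracing definitions rather than introducing new ideas.
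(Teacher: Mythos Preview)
Your proposal is correct and follows essentially the same route as the paper: apply Thm.~\ref{thm:det_by_coeff}, use Thm.~\ref{thm:Mayer-Vietoris} (and its evident graded analogue) for the Mayer-Vietoris hypothesis, and reduce to the observation that over a point the map is $\KUmod{D}{\bullet} \to KU^{D,{\rm mod,gr}}_{\bullet}$, which is a $\pi_*$-isomorphism by stability of $K$-theory. The paper's proof is a two-line version of exactly this; you have simply made explicit the homotopy invariance check and the fact that the proof of Thm.~\ref{thm:Mayer-Vietoris} transfers verbatim to $Mf^{\rm gr}_{\bullet}$, which the paper takes for granted.
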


\begin{proof} \label{pf:Mf_is_Mfgr}
By Thm.\ \ref{thm:Mayer-Vietoris} the functors $K_{\bullet}^{\Pb,{\rm top}}$ and $K_{\bullet}^{\Pb,{\rm top, gr}}$ on $\CW{D}$ satisfy the conditions of Thm.\ \ref{thm:det_by_coeff}. Therefore it suffices to check that $K_n^{\Pb, {\rm top}}(pt) \to K_n^{\Pb, {\rm top, gr}}(pt)$ is an isomorphism. But over a point we have $Mf_{\bullet} = \KUmod{D}{\bullet}$ and $Mf^{\rm gr}_{\bullet} = KU^{D, {\rm mod, gr}}_{\bullet}$ and the statement follows from the observation above.
\end{proof}

Multiplication by the Bott element $[b] \in K_2(pt)$ induces an isomorphism of twisted $K$-homology groups $KK_{-n}(C(X,\mathcal{A}),D) \to KK_{-(n+2)}(C(X,\mathcal{A}),D)$. The element $[b]$ can be represented by a pointed map $S^2 \to \hom_{\rm gr}(\Shat,\widehat{\K})$. Using the same construction as for $\Kbun{D}$, there is an action map
\[
	Mf^{\rm gr}_n \wedge \hom_{\rm gr}(\Shat, \widehat{\K}) \to Mf^{\rm gr}_n\ .
\]
Just as for $\Kbun{D}$ this yields a Bott map $Mf^{\rm gr}_n \to \Omega^2Mf^{\rm gr}_n$, which induces
\[
	K_n^{\Pb, {\rm top, gr}}(X) = \pi_n(Mf^{\rm gr}_{\bullet}) \to \pi_{n}(\Omega^2Mf_{\bullet}^{\rm gr}) \cong \pi_{n+2}(Mf_{\bullet}^{\rm gr}) = K_{n+2}^{\Pb, {\rm top, gr}}(X)\ . 
\]

\begin{definition}
We define 
\begin{align*}
	K_n^{\Pb, {\rm an,lim}}(X) & = \lim_k KK_{-(n + 2k)}(C(X, \mathcal{A}), D) \\
	K_n^{\Pb^-, {\rm top,lim}}(X) & = \lim_k K^{\Pb^-,{\rm top, gr}}_{n + 2k}(X)
\end{align*}
where the direct limit in both cases is taken over the Bott homomorphisms described above. \end{definition}

These are both functors $\CW{D} \to {\rm GrAb}$. It is immediate that $K_n^{\Pb, {\rm an}}(X) \to K_n^{\Pb, {\rm an,lim}}(X)$ is an isomorphism. The same holds true for $K_n^{\Pb^-, {\rm top, gr}}(X) \to K_n^{\Pb^-, {\rm top,lim}}(X)$ by Thm.\ \ref{thm:det_by_coeff} and the fact that the Bott homomorphism induces an isomorphism $K_n^{\Pb^-, {\rm top, gr}}(pt) \to K_{n+2}^{\Pb^-, {\rm top, gr}}(pt)$ on coefficients.

\begin{theorem} \label{thm:nat_iso_1}
Let $D$ be a strongly self-absorbing $C^*$-algebra, which satisfies the UCT. The analytic index map induces a natural isomorphism of functors
\[
	{\rm ind}_a \colon k_n^{\Pb^-} \to K_n^{\Pb,{\rm an,lim}}\ .
\]
\end{theorem}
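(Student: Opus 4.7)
The natural strategy is to apply Theorem \ref{thm:det_by_coeff}: both functors $k_n^{\Pb^-}$ and $K_n^{\Pb, {\rm an,lim}}$ should satisfy the hypotheses, and then the problem reduces to showing that ${\rm ind}_a$ is an isomorphism on objects whose underlying CW-complex is a point. Homotopy invariance and the Mayer--Vietoris sequence for $k_n^{\Pb^-}$ were already established in Lemma~\ref{lem:satisfies_cond}. For $K_n^{\Pb,{\rm an,lim}}$ the same properties come from Kasparov's $KK$-theory: homotopy invariance is standard, and the Mayer--Vietoris sequence arises by applying the six-term exact sequence in the first variable of $KK(\,\cdot\,,D)$ to the pullback square
\[
    \xymatrix{
        C(X,\mathcal{A}) \ar[r] \ar[d] & C(V_0,\left.\mathcal{A}\right|_{V_0}) \ar[d] \\
        C(V_1,\left.\mathcal{A}\right|_{V_1}) \ar[r] & C(V_0 \cap V_1,\left.\mathcal{A}\right|_{V_0 \cap V_1})
    }
\]
from Theorem \ref{thm:Twisted_K}(\ref{it:e}), combined with the fact that filtered colimits preserve exactness of six-term sequences.

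Next I would verify that ${\rm ind}_a$ defines a natural transformation. Functoriality in morphisms $\varphi \colon X \to Y$ of $\CW{D}$ follows from naturality of the ingredients of ${\rm ind}_a$: the pushforward $\bar{g}_*$, the twisted $K$-theory class $[E_g]$ (natural in the bundle data by construction), and the Poincar\'{e} duality map $PD_0$ from \eqref{eqn:PD} (natural with respect to trivializations of $\Pb \otimes \Pb^-$ because the tensor class $[\mathcal{A}^-]$ is). The compatibility with the Bott-stabilization maps on both sides also needs to be checked: crossing $(M,g)$ with $(S^2, b)$ on the framed-bordism side and Kasparov product with the Bott class on the $KK$-side must intertwine under ${\rm ind}_a$, which follows from multiplicativity of the analytic index together with the fact that $b \colon S^2 \to \hom_{\rm gr}(\Shat, \widehat{\K})$ represents the generator of $K_0(\C)$ of $S^2$ whose index is the Bott class.

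The heart of the proof is the point case. Over a point, twists are trivial and we must show that
\[
    \lim_k \Omega^{\rm fr}_{n+2k}\bigl(\hom_{\rm gr}(\Shat, D \otimes \K \otimes \widehat{\K})\bigr) \;\xrightarrow{{\rm ind}_a}\; \lim_k KK_{-(n+2k)}(\C, D) = K_n(D)
\]
is an isomorphism. By Pontryagin--Thom the left-hand side is the Bott-localized stable homotopy $\pi^{\rm st}_{n+\ast}(Y_+)[\beta^{-1}]$ of $Y = \hom_{\rm gr}(\Shat, D \otimes \K \otimes \widehat{\K})$, whose unstable homotopy groups are $\pi_m(Y) = K_m(D)$ by the results used in Thm.~\ref{thm:KU_module}. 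The analytic index sends a framed manifold $(M,g)$ with a twisted bundle class $[E_g]$ to the index of the associated $D$-linear twisted Dirac operator, which is the geometric Pontryagin--Thom model for the unit map $\mathbb{S} \to KU^D$ evaluated on $Y$; Bott-periodicity on the target then makes this identification an isomorphism. The UCT hypothesis on $D$ is invoked to guarantee $KK_*(\C,D) = K_*(D)$ cleanly and, together with strong self-absorption, that $KU^D_\bullet$ behaves as a $KU$-module localization so that the Conner--Floyd / Snaith-type identification $\pi^{\rm st}_*(Y_+)[\beta^{-1}] \cong K_*(D)$ holds via ${\rm ind}_a$.

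The main obstacle is this last step: establishing that the geometrically defined analytic index map (Dirac operator twisted by a bundle of $D \otimes \K \otimes \widehat{\K}$-modules) realizes the abstract localization isomorphism on the coefficient group. In the untwisted classical setting this is essentially the content of \cite{paper:BaumHigsonSchick}, and I would expect to follow their strategy closely: use that the zeroth space of $KU^D_\bullet$ receives the Bott-inverted unit map, verify by inspection of generators that ${\rm ind}_a$ hits the $K$-theory generators of $D$, and rely on the UCT to reduce general $D$ to the cases already understood (so that the index of the universal family over $Y$ gives the Bott generator of $K_2(\C)$ paired with a chosen representative of $K_n(D)$). Once this is done, Theorem \ref{thm:det_by_coeff} promotes the isomorphism on points to an isomorphism on all of $\CW{D}$.
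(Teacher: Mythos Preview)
Your overall strategy is exactly the paper's: use Theorem~\ref{thm:det_by_coeff} to reduce to the one-point case, invoke Pontryagin--Thom, and follow the Baum--Higson--Schick argument for the coefficient computation. The checks you outline for naturality, Bott compatibility, and the Mayer--Vietoris axiom are the right ones and match the paper's treatment.

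There are, however, two concrete slips in the point case that you should correct. First, over a point the fibre of $\mathcal{A}$ is $D \otimes \K$, so the analytic side is $KK_{-\ast}(D \otimes \K, D) \cong KK_{-\ast}(D,D)$, not $KK_{-\ast}(\C,D)$. The paper then uses \cite[Thm.~3.4]{paper:DadarlatKK1} to identify $KK_i(D,D) \cong K_i(D)$ via the unit map; this is a genuine step, not a tautology. Second, your account of where the UCT enters is off: $KK_*(\C,D) = K_*(D)$ holds by definition and needs no UCT. The UCT is invoked (via \cite[Prop.~5.1]{paper:TomsWinter}) to conclude $K_1(D) = 0$, so that both source and target vanish in odd degree and only the degree-zero map remains to be checked. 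The paper then verifies directly that $\pi_0(K^D) \to K_0(D)$ sends a point of $K^D$ to the $K$-class it represents (up to an automorphism coming from the trivialization), which is visibly an isomorphism. This replaces your appeal to a ``Conner--Floyd/Snaith-type identification,'' which is not needed here: the Baum--Higson--Schick argument already gives $\lim_k \pi^s_{n+2k}((K^D)_+) \cong \pi_n(K^D)$ outright, and the remaining comparison is a one-line description of what ${\rm ind}_a$ does on $\pi_0$.
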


\begin{proof}
The Poincar\'{e} duality homomorphism (\ref{eqn:PD}) required the choice of isomorphisms 
\[
	C(X, \mathcal{A} \otimes \mathcal{A}^-) \cong C(X, (D \otimes \K)^{\otimes 2}) \cong C(X, D \otimes \K)
\]
Since $\Pb$, its inverse $\Pb^-$ and the trivialization $\tau \colon \Pb \otimes \Pb^- \to X \times \Aut{(D \otimes \K)^{\otimes 2}}$ are part of the objects in $\CW{D}$, the first isomorphism is canonical. For the second identification there is a canonical path connected space of choices given by pairs of isomorphisms $D \otimes D \cong D$ and $\K \otimes \K \cong \K$ \cite[Thm.\ 2.3]{paper:DadarlatP1}. Moreover, a stable framing of a manifold $M$ determines a spin$^c$ structure on it. By Thm.\ \ref{thm:bordism_invariance} we therefore obtain a natural transformation 
\[
	\Omega^{\rm fr}_{n}((f^-)^*\Kbun{D}) \to K^{\Pb,{\rm an}}_n(X) \quad ; \quad [M,g] \mapsto {\rm ind}_a(M,g)\ .
\]
To check that this construction is compatible with direct limits over the Bott isomorphisms note that $[E_{m \circ (g \times \eta_2)}] = [E_g] \boxtimes [B] \in K^0_{\Pb}(M \times S^2)$, where $[B]\in K_0(S^2)$ is the Bott bundle (i.e.\ the Hopf bundle minus the trivial line bundle) and the tensor product is induced by the exterior tensor product. Let $q \colon S^2 \to \{pt\}$ denote the map to the point. The product with $q_*(\iota_{S^2}([B]) \otimes [S^2]) = [b] \in K_2(pt)$ is the Bott isomorphism. We have
\begin{align*}
{\rm ind}_a(M \times S^2, m \circ (g \times \eta_2)) & = (\bar{g}_* \circ {\rm pr}_{M*})PD_0([E_{m \circ (g \times \eta_2)}]) \\
& = (\bar{g}_* \circ {\rm pr}_{M*})\left( \left(\iota_{M}([E_g]) \otimes [M] \right) \boxtimes \left(\iota_{S^2}([B]) \otimes [S^2] \right) \right)\\
& = \bar{g}_* PD_0([E_{g}]) \cdot [b] = {\rm ind}_a(M,g) \cdot [b]
\end{align*}
Therefore we obtain a natural transformation $k_n^{\Pb^-} \to K_n^{\Pb, {\rm an,lim}}$. Since the right hand side is homotopy invariant and has Mayer-Vietoris sequences by \cite[Thm.\ 21.5.1]{book:Blackadar}, the conditions of Thm.~\ref{thm:det_by_coeff} are satisfied and it remains to be checked that $k_n^{\Pb^-}(pt) \to K_n^{\Pb,{\rm an,lim}}(pt)$ is an isomorphism, where $\Pb = \Aut{D \otimes \K}$ is the trivial bundle over $pt$. By the Pontrjagin-Thom construction the $n$-th framed cobordism group $\Omega^{\rm fr}_n(W)$ of an unbased space $W$ agrees with the stable homotopy group $\pi^s_n(W_+)$, where $W_+$ is $W$ with a disjoint basepoint. Let $K^D = \hom_{\rm gr}(\Shat, D \otimes \K \otimes \widehat{\K})$. Using the same argument as in \cite[Sec.\ 9.1]{paper:BaumHigsonSchick} (see the remark below) it follows that 
\[
	k^{\Pb}_n(pt) = \lim_k \Omega^{\rm fr}_{n+2k}(K^D) \cong \lim_k \pi^s_{n+2k}((K^D)_+) \cong \pi_n(K^D)\ .
\]
The unit map induces an isomorphism $KK_i(D,D) \cong K_i(D)$ by \cite[Thm.\ 3.4]{paper:DadarlatKK1}. The homomorphism $\pi_0(K^D) \to k^{\Pb}_0(pt) \to K_0^{\Pb,{\rm an,lim}}(pt) \cong KK(D,D) \cong K_0(D)$ sends a point in $K^D$ to the $K$-theory class it represents, possibly altered by an automorphism of $D \otimes \K$ that arises from the trivialization of $\Pb \otimes \Pb^- \to pt$. This is an isomorphism. Since we assumed that $D$ satisfies the UCT, we have $KK_1(D,D) \cong K_1(D) \cong 0$ by \cite[Prop.\ 5.1]{paper:TomsWinter}, which implies that $0 = k^{\Pb}_1(pt) \to K^{\Pb, {\rm an,lim}}_1(pt) = 0$ is an isomorphism. 
\end{proof}

\begin{remark} \label{rem:basept_missing}
The basepoint is missing in the argument given in \cite[Sec.\ 9.1]{paper:BaumHigsonSchick}, i.e.\ it should be $\pi_{n+2k}(S^{2m} \wedge \K_+)$ in the square diagram on page 21. The calculation of the homotopy groups is nevertheless correct. This follows from the fact that the map $b \colon S^2 \wedge \K_+ \to \K_+$ induced by Bott periodicity factors through $\K$ (note that $\K$ already has a basepoint). The induced homomorphism $\pi_{n+2k}(S^{2m} \wedge \K_+) \to \pi_{n+2k+2}(S^{2m} \wedge \K)$ yields an isomorphism of the direct limits. We obtain  $\lim_k \pi^s_{n+2k}((K^D)_+) \cong \pi_n(K^D)$ in the same way.
\end{remark}

Observe that there are canonical maps $\mu \colon ((f^-)^*\Kbun{D})_+ \wedge KU^D_n \to r_!(f^-)^*\mathscr{K}U^{D,{\rm gr}}_n = Mf_n^{-,{\rm gr}}$, which yield a corresponding map of symmetric spectra. Let $\mathbb{S}_{\bullet}$ be the sphere spectrum and consider  $\mathbb{S}_{\bullet} \wedge ((f^-)^*\Kbun{D})_+ \to Mf^{-,{\rm gr}}_{\bullet}$ given by
\[
	\xymatrix@C=1.5cm{
		S^n \wedge ((f^-)^*\Kbun{D})_+ \ar[r]^-{\eta_n \wedge \id{}} & KU^D_n \wedge ((f^-)^*\Kbun{D})_+ \ar[r]^-{\mu \circ \tau} & Mf^{-,{\rm gr}}_n
	}
\]
where $\tau$ just switches the factors in the wedge product. By the Pontryagin-Thom construction we have $\Omega^{\rm fr}_n((f^-)^*\Kbun{D}) \cong \pi_n^s(((f^-)^*\Kbun{D})_+) \cong \pi_n(\mathbb{S}_{\bullet} \wedge ((f^-)^*\Kbun{D})_+)$
and the above map induces a transformation $\Omega^{\rm fr}_n((f^-)^*\Kbun{D}) \to \pi_n(Mf^{-,{\rm gr}}_{\bullet}) = K^{\Pb^-, {\rm top, gr}}_n(X)$. The limits on both sides were designed in such a way that this extends to a natural transformation
\[
	k^{\Pb^-}_{n} \to K^{\Pb^-, {\rm top, lim}}_{n}\ .
\]

\begin{theorem} \label{thm:nat_iso_2}
The above transformation is a natural isomorphism $k^{\Pb^-}_{n} \to K^{\Pb^-, {\rm top, lim}}_{n}$. If $D$ is a strongly self-absorbing $C^*$-algebra that satisfies the UCT, we obtain a natural isomorphism of functors $\CW{D} \to {\rm GrAb}$ of the form
\[
	K^{\Pb^-, {\rm top}}_{\bullet} \cong K^{\Pb, {\rm an}}_{\bullet} \ .
\]
\end{theorem}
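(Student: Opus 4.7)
My plan is to apply Theorem \ref{thm:det_by_coeff} to the natural transformation $k^{\Pb^-}_{n} \to K^{\Pb^-, {\rm top, lim}}_{n}$, reducing the first assertion to a computation on a point, and then to chain together the already-established equivalences to deduce the second assertion.

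First, I would verify that $K^{\Pb^-, {\rm top, lim}}_{\bullet}$ is a functor on $\CW{D}$ satisfying the hypotheses of Theorem~\ref{thm:det_by_coeff}. Homotopy invariance is immediate from Lemma~\ref{thm:Mf_is_Mfgr} and the homotopy invariance of $K^{\Pb^-, {\rm top}}_\bullet$ (which as a homotopy group of a spectrum is clearly homotopy invariant). For Mayer--Vietoris, I would combine Theorem~\ref{thm:Mayer-Vietoris} with the fact that $\Kbun{D}$-stabilization of the Thom spectrum preserves the $\infty$-categorical pushout used there, and then note that direct limits of abelian groups are exact, so the Bott-colimit defining $K^{\Pb^-,{\rm top,lim}}_\bullet$ inherits a Mayer--Vietoris sequence. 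Combined with Lemma~\ref{lem:satisfies_cond}, both the source and target of our natural transformation fulfill the criteria of Theorem~\ref{thm:det_by_coeff}.

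Next, I would check the natural transformation is an isomorphism on an object with underlying space $\{\mathrm{pt}\}$ (necessarily with trivial $\Pb$, $\Pb^-$, and canonical $\tau$). On one side, the Pontryagin--Thom argument recalled in the proof of Theorem~\ref{thm:nat_iso_1} (together with Remark~\ref{rem:basept_missing}) identifies $k^{\Pb^-}_n(\mathrm{pt})$ with $\pi_n(K^D)=K_n(D)$. On the other side, $K^{\Pb^-,{\rm top, lim}}_n(\mathrm{pt})=\lim_k \pi_{n+2k}(KU^{D,{\rm mod,gr}}_\bullet)$, which by the spectrum equivalence $\KUmod{D}{\bullet}\simeq KU^{D,{\rm mod,gr}}_\bullet$ and Theorem~\ref{thm:KU_module} equals $K_n(D)$. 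The explicit definition of the transformation (smashing with the unit $\eta_n\colon S^n \to KU^D_n$ and then multiplying into $Mf^{-,{\rm gr}}_\bullet$) unwinds exactly into the standard identification of a framed point with its $K$-theory class in $K_n(D)$; this is then compatible with stabilization by the Bott element $[b]$ on both sides because multiplication by $[b]$ corresponds precisely to smashing with the Bott map $S^2 \to \hom_{\rm gr}(\Shat,\widehat{\K})$ used to form the direct limits. Hence $k^{\Pb^-}_n(\mathrm{pt}) \to K^{\Pb^-, {\rm top, lim}}_n(\mathrm{pt})$ is an isomorphism, and Theorem~\ref{thm:det_by_coeff} then yields the first claim.

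For the second assertion, I would chain the isomorphisms that have been assembled in this subsection: Lemma~\ref{thm:Mf_is_Mfgr} gives $K^{\Pb^-, {\rm top}}_\bullet \cong K^{\Pb^-, {\rm top, gr}}_\bullet$; the remark preceding the theorem that the Bott colimit is already an isomorphism on coefficients (together with Theorem~\ref{thm:det_by_coeff}) gives $K^{\Pb^-, {\rm top, gr}}_\bullet \cong K^{\Pb^-, {\rm top, lim}}_\bullet$; the first part of the present theorem supplies $K^{\Pb^-, {\rm top, lim}}_\bullet \cong k^{\Pb^-}_\bullet$; Theorem~\ref{thm:nat_iso_1} (where the UCT assumption enters, to handle the $K_1(D)$ piece on the point) supplies $k^{\Pb^-}_\bullet \cong K^{\Pb, {\rm an, lim}}_\bullet$; and finally the trivial observation $K^{\Pb, {\rm an, lim}}_\bullet \cong K^{\Pb, {\rm an}}_\bullet$. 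All the intermediate transformations are natural in morphisms of $\CW{D}$, so the composite is a natural isomorphism.

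The main obstacle I anticipate is the bookkeeping at the point in step two: verifying that the zigzag through Pontryagin--Thom, the unit map $\eta_n$, and the stabilization by $\widehat{\K}$ really lands on the \emph{identity} automorphism of $K_n(D)$ rather than some potentially nontrivial automorphism coming from the choices of trivializations $\tau$ and the isomorphisms $D\otimes D\cong D$, $\K\otimes\K\cong\K$. As in the proof of Theorem~\ref{thm:nat_iso_1}, one appeals to \cite[Thm.\ 2.3]{paper:DadarlatP1} to see that the space of such choices is path-connected, so any such automorphism is homotopic to the identity and the induced map on $K$-theory is the identity; this is the one place where strong self-absorption enters crucially.
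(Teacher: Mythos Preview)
Your proposal is correct and follows essentially the same route as the paper: both reduce via Theorem~\ref{thm:det_by_coeff} (after invoking Lemma~\ref{lem:satisfies_cond} and Theorem~\ref{thm:Mayer-Vietoris}) to the computation over a point, identify both sides there with $\pi_n(K^D)\cong K_n(D)$ via Pontryagin--Thom and the identification $Mf^{-,{\rm gr}}_0=K^D$, and then chain the five established isomorphisms for the second statement. The paper makes the point-level check precise by exhibiting a small commutative square linking $\pi_n(K^D)\to\pi_n(Mf^{-,{\rm gr}}_0)$ with the two Bott-colimits, which is what your verbal ``unwinds exactly'' is gesturing at; your worry about trivialization choices $\tau$ is unnecessary here, as that issue arises only on the analytic side in Theorem~\ref{thm:nat_iso_1}.
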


\begin{proof} \label{pf:nat_iso_2}
We have seen in Lem.\ \ref{lem:satisfies_cond} and Thm.\ \ref{thm:Mayer-Vietoris} that the functors $k_{\bullet}^{\Pb^-}$ and $K_{\bullet}^{\Pb^-, {\rm top, lim}}$ satisfy the conditions of Thm.\ \ref{thm:det_by_coeff}. To prove the first statement, it therefore suffices to check that $k^{\Pb^-}_n(pt) \to K_n^{\Pb^-, {\rm top,lim}}(pt)$ is an isomorphism. In the case $X = pt$ we have $Mf^{-,{\rm gr}}_{\bullet} = KU^{D, {\rm mod,gr}}_{\bullet}$, in particular $Mf^{-,{\rm gr}}_0 = K^D$ in the notation of the proof of Thm.\ \ref{thm:nat_iso_1}. As already stated there we have
\[
	k^{\Pb^-}_n(pt) \cong \lim_k \pi_{n+2k}^s((K^D)_+) \cong \pi_n(K^D)
\]
and the following square commutes
\[
	\xymatrix{
		\pi_n(K^D) \ar[d]_-{\cong} \ar[r]^-{\cong} & \pi_n(Mf^{-,{\rm gr}}_{0}) \ar[d]^-{\cong} \\
		k^{\Pb^-}_n(pt) \cong \lim_k \pi_{n+2k}^s((K^D)_+) \ar[r] & \lim_k \pi_{n+2k}(Mf^{-,{\rm gr}}_{\bullet}) \cong K^{\Pb^-,{\rm top,lim}}(pt)
	}
\]
proving the first claim. For the second statement we combine all natural isomorphisms
\[
	\xymatrix{
		K^{\Pb^-, {\rm top}}_{\bullet} \ar[r]^-{\cong} & K^{\Pb^-, {\rm top, gr}}_{\bullet} \ar[r]^-{\cong} & K^{\Pb^-, {\rm top, lim}}_{\bullet} & \ar[l]_-{\cong} k_{\bullet}^{\Pb^-} \ar[r]^-{\cong} & K^{\Pb, {\rm an, lim}}_{\bullet} & \ar[l]_-{\cong} K^{\Pb, {\rm an}}_{\bullet}  
	}
\]
proven above.
\end{proof}

\begin{remark} \label{rem:UCT}
It is conjectured that every separable nuclear $C^*$-algebra satisfies the universal coefficient theorem (UCT) in $KK$-theory needed above and all the known examples of strongly self-absorbing $C^*$-algebras do. 
\end{remark}

\section{Higher twisted $K$-Theory of suspensions}
Let $D$ be a strongly self-absorbing $C^*$-algebra, which satisfies the universal coeffient theorem.  In this section we will construct a noncommutative model of all twists of $K$-Theory for suspensions and compute the corresponding twisted $K$-groups, in particular those of the odd-dimensional spheres. Observe that $gl_1(KU^D)^1(S^{2k}) = [S^{2k}, BGL_1(KU^D)] = 0$, therefore the twisted $K$-groups of even-dimensional spheres agree with the untwisted ones. 

\subsection{Morita equivalences and linking algebras}
Given two $C^*$-algebras $A$ and $B$, an $A$-$B$-\emph{(Morita) equivalence} is a full right Hilbert $B$-module $X$ together with an isomorphism $A \to \cpt{B}{X}$ onto the compact $B$-linear operators on $X$. Equivalently, $X$ is a Banach space that is a full left Hilbert $A$-module as well as a full right Hilbert $B$-module, such that the inner products satisfy $\lscal{A}{\xi}{\eta}\zeta = \xi\rscal{B}{\eta}{\zeta}$. If $X$ is an $A$-$B$-equivalence, then the adjoint module $X^*$ is a $B$-$A$-equivalence. This is the inverse equivalence in the sense that the inner products induce canonical bimodule isomorphisms $X \otimes_B X^* \cong A$ and $X^* \otimes_A X \cong B$. We refer the reader to \cite{paper:RieffelMorita, paper:BrownGreenRieffel} for a detailed exposition. We can associate to an $A$-$B$-equivalence $X$ the \emph{linking algebra} $L(X)$, which contains $A$ and $B$ as complementary full corners \cite[Thm.\ 1.1]{paper:BrownGreenRieffel}. This is given by
\[
	L(X) = 
	\begin{pmatrix} 
		A & X \\
		X^* & B
	\end{pmatrix}
\]
with the multiplication and $*$-operation:
\[
	\begin{pmatrix}
		a_1 & \xi_1 \\
		\eta^*_1 & b_1
	\end{pmatrix}
	\cdot
	\begin{pmatrix}
		a_2 & \xi_2 \\
		\eta^*_2 & b_2
	\end{pmatrix} = 
	\begin{pmatrix}
		a_1a_2 + \lscal{A}{\xi_1}{\eta_2} & a_1\xi_2 + \xi_1b_2 \\
		\eta^*_1a_2 + b_1\eta_2 &  \rscal{B}{\eta_1}{\xi_2} + b_1b_2
	\end{pmatrix}
	\quad , \quad
	\begin{pmatrix}
		a & \xi \\
		\eta^* & b
	\end{pmatrix}^*
	= 
	\begin{pmatrix}
		a^* & \eta \\
		\xi^* & b^*
	\end{pmatrix}\ .
\]
If $A$ and $B$ are unital, then $X$ is finitely generated and projective as a right Hilbert $B$- and a left Hilbert $A$-module. In this case we have an induced isomorphism $X_* \colon K_0(A) \to K_0(B)$, which sends a finitely generated projective right Hilbert $A$-module $E$ to $E \otimes_A X$. 

Let $H_B = \ell^2(\N) \otimes B$. If we choose an isomorphism of right Hilbert $B$-modules $X \cong pH_B$ for a projection $p \in B \otimes \K$, then the left action of $A$ on $X$ identifies $A$ with the corner $p(B \otimes \K)p \subset B \otimes \K$. In particular, $X$ induces an injective $*$-homomorphism $\iota_X \colon A \otimes \K \to B \otimes \K$. Let $q \in A \otimes \K$ be a projection with $E \cong qH_A$, then $X_*(E)$ corresponds to the projection $\iota_X(q)$. 

Let $p_0 = \left(\begin{smallmatrix} 1 & 0 \\ 0 & 0  \end{smallmatrix}\right) \in L(X)$ and $p_1 = \left(\begin{smallmatrix} 0 & 0 \\ 0 & 1 \end{smallmatrix} \right) \in L(X)$. $Y_0 = p_0L(X)$ is a Morita equivalence between $A = p_0L(X)p_0$ and $L(X)$. Similarly, $Y_1= p_1L(X)$ is a $B$-$L(X)$-equivalence. The following diagram of isomorphisms commutes:
\[
	\xymatrix{
	 K_0(A) \ar[dr]_-{Y_{0*}} \ar[rr]^-{X_*} & & K_0(B) \ar[dl]^-{Y_{1*}}\\
	 & K_0(L(X))
	}
\]
As above, $Y_{0*}$ agrees with the map that sends a projection $q \in A \otimes \K$ to $\left(\begin{smallmatrix} q & 0 \\ 0 & 0 \end{smallmatrix}\right) \in L(X) \otimes \K$. Apart from $Y_{0*}$ we have another natural homomorphism $K_0(A) \to K_0(L(X))$ induced by a stable isomorphism $\varphi_0 \colon A \otimes \K \to L(X) \otimes \K$ constructed as follows: Consider $p_0 \otimes 1 \in M(L(X) \otimes \K)$. By \cite[Lem.\ 2.5]{paper:BrownStableIso} there exists a partial isometry $v \in M(L(X) \otimes \K)$ with $v^*v = 1$ and $vv^* = p_0 \otimes 1$, which induces $\varphi_0$ via $\varphi_0(a) = v^*\left(\begin{smallmatrix} a & 0 \\ 0 & 0 \end{smallmatrix}\right)v$. Analogously, we define the isomorphism $\varphi_{1} \colon B \otimes \K \to L(X) \otimes \K$.

\begin{lemma} \label{lem:phi_is_Y}
The isomorphism $\varphi_{0*} \colon K_0(A) \to K_0(L(X))$ does not depend on the choice of $v$, $v^*$ with the above properties. Moreover, we have $Y_{i*} = \varphi_{i*}$ for $i \in \{0,1\}$ and $(\varphi_{1}^{-1} \circ \varphi_{0})_* = X_*$.
\end{lemma}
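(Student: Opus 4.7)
The plan is to treat the three assertions in order. The third will reduce immediately to the second via the commutative triangle $Y_{1*}\circ X_* = Y_{0*}$ already established in the paper: combining it with $Y_{i*} = \varphi_{i*}$ yields $X_* = Y_{1*}^{-1}\circ Y_{0*} = \varphi_{1*}^{-1}\circ \varphi_{0*} = (\varphi_1^{-1}\circ\varphi_0)_*$. So the real content is in the first two claims.

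For the independence of $\varphi_{0*}$ from the choice of $v$, given two partial isometries $v_1,v_2 \in M(L(X)\otimes \K)$ with $v_i^*v_i = 1$ and $v_iv_i^* = p_0\otimes 1$, I would form $u := v_1^*v_2$. A short computation gives $u^*u = v_2^*(p_0\otimes 1)v_2 = v_2^*v_2v_2^*v_2 = 1$ and symmetrically $uu^* = 1$, so $u$ is a unitary in $M(L(X)\otimes \K)$. A second direct computation, using that the matrix representing $a$ is fixed by left and right multiplication by $v_1v_1^* = p_0\otimes 1$, yields $\varphi_0^{(2)}(a) = u^*\varphi_0^{(1)}(a)u$ for all $a$. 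Since an inner automorphism implemented by a unitary in the multiplier algebra induces the identity on $K_0$ (for a projection $p$ and such a $u$, the element $up$ is a partial isometry in the ideal with source $p$ and range $upu^*$), we obtain $\varphi_{0*}^{(1)} = \varphi_{0*}^{(2)}$, and the argument for $\varphi_1$ is identical.

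For $Y_{0*} = \varphi_{0*}$ (the case $i = 1$ being symmetric), I would represent a class in $K_0(A)$ by a projection $q \in A\otimes \K$, viewed inside the corner $(p_0\otimes 1)(L(X)\otimes \K)(p_0\otimes 1)$, so that $q(p_0\otimes 1) = q$. By definition $Y_{0*}[q] = [q]$ in $K_0(L(X))$, while $\varphi_{0*}[q] = [v^*qv]$. To identify these, consider $w := qv \in L(X)\otimes \K$ (which lies in the ideal because $q$ does). Then $w^*w = v^*qv$ and $ww^* = qvv^*q = q(p_0\otimes 1)q = q$, which gives the desired Murray--von Neumann equivalence $[v^*qv] = [q]$ in $K_0(L(X))$. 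The subtle ingredient is precisely this choice of $w$: the identity $ww^* = q$ relies essentially on the fact that $q$ already lies in the corner cut out by $p_0\otimes 1$. Everything else reduces to direct algebraic verification and the standard inner-automorphism argument from the previous paragraph.
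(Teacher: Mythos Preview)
Your proof is correct. The treatment of the first claim and the reduction of the third to the second match the paper's argument essentially verbatim (the paper cites path-connectedness of $U(M(L(X)\otimes\K))$ rather than your direct Murray--von Neumann equivalence, but these are interchangeable standard facts).

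For the second claim, however, you take a genuinely different and more elementary route. The paper shows $\iota_* = \varphi_{0*}$ by constructing a strictly continuous path of isometries $v_t \in M(L(X)\otimes\K)$ with $v_0 = v$ and $v_1 = 1$, so that $x \mapsto v_t x v_t^*$ gives a homotopy of $*$-endomorphisms from $\iota\circ\varphi_0^{-1}$ to the identity; the path is built by concatenating three pieces using auxiliary isometries $r_0,r_1$ coming from a splitting $H_L \oplus H_L \cong H_L$, in the style of the standard ``two-isometry'' trick. Your argument bypasses all of this: you simply observe that for a projection $q$ in the $p_0$-corner, the element $w = qv$ is a partial isometry in the ideal implementing a Murray--von Neumann equivalence between $q$ and $v^*qv$ directly. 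This is shorter and avoids any homotopy construction; the paper's approach, on the other hand, proves the stronger statement that $\iota\circ\varphi_0^{-1}$ is homotopic to the identity as a $*$-homomorphism, which may be of independent interest but is not needed for the lemma as stated.
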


\begin{proof} \label{pf:phi_is_Y}
If $w \in M(L(X) \otimes \K)$ is another partial isometry with $w^*w = 1$ and $ww^* = p_0 \otimes 1$, then $u = v^*w$ is a unitary with $vu = w$. Since $U(M(L(X) \otimes \K))$ is path-connected \cite{paper:CuntzHigson}, the two homomorphisms $K_0(A) \to K_0(L(X))$ corresponding to $v$ and $w$ agree. 

Let $\iota \colon A \otimes \K \to L(X) \otimes \K$ be given by $\iota(a) = \left(\begin{smallmatrix} a & 0 \\ 0 & 0 \end{smallmatrix} \right)$. By the above remarks we have $\iota_* = Y_{0*}$. Moreover, $\iota \circ \varphi_0^{-1}(x) = vxv^*$. We will construct a strictly continuous path $v_t \in M(L(X) \otimes \K)$ for $t \in [0,1]$ with $v_0 = v$ and $v_1 = 1$. This will prove the second claim. Let $H_L = \ell^2(\N) \otimes L(X)$. We can identify $M(L(X) \otimes \K)$ with $\bdd{L(X)}{H_L}$, i.e.\ the bounded adjointable operators on $H_L$. There are two notions of strict topology on both algebras, which agree on the unit ball \cite[Prop.\ 8.1]{book:Lance}.

Choose an isomorphism $H_L \oplus H_L \to H_L$ and denote by $r_0$, $r_1 \colon H_L \to H_L$ the precomposition of this isomorphism with the natural maps $H_L \to H_L \oplus H_L$. In particular, we have $r_i^*r_j = \delta_{ij}1$. Using the same idea as in the proof of \cite[Thm.\ I.3.2.10]{book:BlackadarOpAlg} we obtain a strictly continuous path of partial isometries $w_t \colon H_L \to H_L$ with $w_t^* w_t = 1$, $w_1 = r_0$ and $w_0 = 1$. Denote the corresponding path for $r_1$ by $w_t''$. Now consider $w_t' = \sqrt{1-t}\,r_0v + \sqrt{t}\,r_1$, which is a strictly continuous path of partial isometries with $(w_t')^*w_t' = 1$, $w_0' = r_0v$ and $w_1' = r_1$. $v_t$ is now given by concatenating the paths $w_t\,v$, $w_t'$ and $w_{1-t}''$.
\end{proof}

\subsection{Construction of higher twists}
Let $X = \Sigma Y$ be the suspension of a finite-dimensional compact metrizable space $Y$. We will assume that $Y$ is cofibrantly pointed so that the map from the unreduced to the reduced suspension is a homotopy equivalence. Choose contractible closed subsets $W_0, W_1 \subset X$, contractible open subsets $U_i \subset W_i$ and contractible closed subsets $V_i \subset U_i$, such that each of the pairs $W_0, W_1$, $U_0, U_1$ and $V_0, V_1$ covers $X$ and such that $W_0 \cap W_1 \simeq Y$, $U_0 \cap U_1 \simeq Y$ and $V_0 \cap V_1 \simeq Y$. The twists of the $K$-Theory of $X$ are given by 
\[
	gl_1(KU^D)^1(\Sigma Y) = [Y, \Omega B\Aut{D \otimes \K}] = [Y, \Aut{D \otimes \K}] \cong K_0(C(Y) \otimes D)^{\times}_+\ .
\]
In the following we will make this isomorphism precise by constructing a continuous $C(X)$-algebra $A$ with fibers stably isomorphic to $D$, which satisfies the generalized Fell condition \cite[Def.\ 4.1]{paper:DadarlatP1}. By \cite[Thm.\ 4.2]{paper:DadarlatP1}, $A \otimes \K$ is then isomorphic to the section algebra of a locally trivial continuous bundle with fiber $D \otimes \K$. 

Let $B = C(X) \otimes D \otimes \K$, $B_0 = C_0(U_1 \cap U_2) \otimes D \otimes \K$ and let $p \in B(W_0 \cap W_1)$ be a full projection representing an invertible element $[p] \in K_0(B(W_0 \cap W_1))_+^{\times}$. By \cite[Thm.\ 0.1]{paper:DadarlatWinterTriv} and \cite[Lem.\ 2.14]{paper:DadarlatP1} the continuous $C(W_0 \cap W_1)$-algebra $pB(W_0 \cap W_1)p$ is isomorphic to $C(W_0 \cap W_1) \otimes D$. Let $q_0 = 1_{C(X)} \otimes 1_D \otimes e \in B$ and let $\varphi \colon C(W_0 \cap W_1) \otimes D \to pB(W_0 \cap W_1)p$ be an isomorphism, which turns $H' = pB(W_0 \cap W_1)q_0$ into a self-Morita equivalence of $C(W_0 \cap W_1) \otimes D$. Let $H'_0 = pB_0q_0 \subset H'$. Since $\varphi$ restricts to an isomorphism $C_0(U_0 \cap U_1) \otimes D \to pB_0p$, $H_0'$ is a self-Morita equivalence of $C_0(U_0 \cap U_1) \otimes D$. We can extend $H_0'$ to a (non-full) Hilbert $C(X) \otimes D$-bimodule $H$ by defining the multiplication with functions in $C(X) \otimes D$ via restriction. The inner product takes values in $C_0(U_0 \cap U_1) \otimes D$, which sits inside $C(X) \otimes D$ via extension by $0$. Note that the fiber $H(x) = H / C_0(X\setminus \{x\})H$ is zero if $x \notin U_0 \cap U_1$. Let $H^*$ be the analogous extension of $(H_0')^*$. Let $A_0 = C_0(U_0) \otimes D$ and extend it to a continuous $C(X)$-algebra in the same way, likewise define $A_1 = C_0(U_1) \otimes D$.  Now consider the linking algebra of $H$ given by 
\[
	A = 
	\begin{pmatrix} 
		A_0 & H \\
		H^* & A_1
	\end{pmatrix}
\]

\begin{lemma} \label{lem:A_cont_C(X)}
The $C^*$-algebra $A$ is a continuous $C(X)$-algebra. All fibers of $A \otimes \K$ are isomorphic to $D \otimes \K$ and $A \otimes \K$ satisfies the generalized Fell condition. Therefore there is a locally trivial bundle of $C^*$-algebras $\mathcal{A} \to X$ with fiber $D \otimes \K$, such that $C(X,\mathcal{A}) \cong A$ as $C(X)$-algebras.
\end{lemma}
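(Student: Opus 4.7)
The plan is to verify the three hypotheses of \cite[Thm.\ 4.2]{paper:DadarlatP1}: that $A$ is a continuous $C(X)$-algebra, that each fiber of $A \otimes \K$ is isomorphic to $D \otimes \K$, and that $A \otimes \K$ satisfies the generalized Fell condition. I would begin with the $C(X)$-algebra structure. Both $A_0$ and $A_1$ are continuous $C(X)$-algebras by construction, being extensions by zero of $C_0(U_i) \otimes D$. The Hilbert bimodule $H$ sits as a closed $C(X)$-submodule of the continuous $C(X)$-algebra $B = C(X) \otimes D \otimes \K$, so the function $x \mapsto \norm{\xi(x)}$ is continuous for every $\xi \in H$ and vanishes outside $U_0 \cap U_1$. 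The inner products $\lscal{A_0}{\cdot}{\cdot}$ and $\rscal{A_1}{\cdot}{\cdot}$ inherit continuity from $B$ as well. Upper semicontinuity of the fiber norm on $A$ is then automatic, and lower semicontinuity of $x \mapsto \norm{a(x)}$ follows via $\norm{a(x)}^2 = \norm{(a^*a)(x)}$ from the continuity of the inner products and of the corner elements.

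Next I would identify the fibers explicitly. At $x \in U_0 \cap U_1$ the fiber $H(x) = p(x) \cdot (D \otimes \K) \cdot q_0(x)$ is a full $D$-$D$ imprimitivity bimodule, because $p$ and $q_0$ restrict to full projections in $B(W_0 \cap W_1)$ and $B$ respectively, identified with the units of $D$ via $\varphi$. Hence $A(x)$ is the classical linking algebra of a self-Morita equivalence of $D$, isomorphic to $M_2(D)$. At $x \in U_i \setminus U_{1-i}$ the bimodule $H$ and the corner $A_{1-i}$ both vanish, so $A(x) = A_i(x) = D$. After tensoring with $\K$ all fibers become $D \otimes \K$, using $M_2(D) \otimes \K \cong D \otimes \K$.

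For the generalized Fell condition, I would exhibit local full corners of $A$ on the cover $\{U_0, U_1\}$. The subalgebra $A_0 \subset A$ is a full corner of $A|_{U_0}$: at $x \in U_0 \setminus U_1$ this is trivial since $A(x) = A_0(x)$, while at $x \in U_0 \cap U_1$ fullness follows because $H(x)$ is a full Morita equivalence and hence $\lscal{A_0}{H(x)}{H(x)} = A_0(x) = D$. Since $A_0|_{U_0} = C_0(U_0) \otimes D$ is trivial and $\sigma$-unital, Brown's stable isomorphism theorem yields $A \otimes \K|_{U_0} \cong C_0(U_0) \otimes D \otimes \K$. The analogous argument over $U_1$, using $A_1$ as the full corner, completes the verification of local triviality and in particular of the generalized Fell condition.

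The main obstacle is the first step: justifying that the linking algebra construction produces a \emph{continuous} $C(X)$-algebra even though $H$ is only a partial Morita equivalence, trivial over $U_0 \cap U_1$ but degenerating to zero at the boundary while the corners $A_0$ and $A_1$ remain nonzero over the larger sets $U_0$ and $U_1$. Once this bookkeeping is handled via the ambient algebra $B$, the fiber computation is routine and the Fell condition follows from the existence of local trivial full corners. The conclusion $A \cong C(X,\mathcal{A})$ for a locally trivial bundle $\mathcal{A}$ with fiber $D \otimes \K$ then follows by direct application of \cite[Thm.\ 4.2]{paper:DadarlatP1}.
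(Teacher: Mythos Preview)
Your argument tracks the paper's closely for continuity (which the paper dismisses as ``by construction'') and for the fiber identification. One minor slip: the linking algebra $L(H(x))$ of a self-Morita equivalence of $D$ is not in general isomorphic to $M_2(D)$ --- the right $D$-module $H(x) \cong p(x)H_D$ need not be free of rank one when $[p(x)] \neq [1_D]$ in $K_0(D)$ --- but it \emph{is} Morita equivalent to $D$, so $L(H(x)) \otimes \K \cong D \otimes \K$, which is what you actually need.

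The real divergence is in the Fell condition. The paper verifies it directly on the \emph{closed} cover $V_0, V_1$: because $V_i \subset U_i$ is compact, $A_i(V_i) \cong C(V_i) \otimes D$ is unital, so the diagonal projections $p_0 = \left(\begin{smallmatrix} 1 & 0 \\ 0 & 0 \end{smallmatrix}\right) \otimes e \in A(V_0) \otimes \K$ and $p_1 = \left(\begin{smallmatrix} 0 & 0 \\ 0 & 1 \end{smallmatrix}\right) \otimes e \in A(V_1) \otimes \K$ are honest elements whose fiberwise class $[1_D]$ is invertible in $K_0(A(x))$. That is the generalized Fell condition verbatim, and \cite[Thm.~4.2]{paper:DadarlatP1} then does the work. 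You instead prove local triviality over the \emph{open} cover $U_0, U_1$ via full corners and Brown's stable isomorphism theorem, and infer the Fell condition from that. This is valid but somewhat circular --- you end by invoking \cite[Thm.~4.2]{paper:DadarlatP1} to obtain local triviality, having just established it directly. Also, Brown's theorem a priori gives only an isomorphism of $C^*$-algebras; you should remark that the equivalence bimodule $p_0\,A|_{U_0}$ is $C_0(U_0)$-balanced, so the resulting stable isomorphism can be taken $C_0(U_0)$-linear. The paper's route via projections on the closed sets sidesteps both of these wrinkles.
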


\begin{proof} \label{pf:A_cont_C(X)}
By construction $A$ is a continuous $C(X)$-algebra. The statement about the fibers is clear for all $x \in X \setminus U_i$. For $x \in U_0 \cap U_1$ the fiber is given by the stabilization of the linking algebra $L(H(x)) = \left( \begin{smallmatrix} D & H(x) \\ H(x)^* & D \end{smallmatrix} \right)$. But this is also isomorphic to $D \otimes \K$. Consider the closed cover of $X$ by $V_0, V_1$ and note that $A(V_0) \otimes \K$ contains the projection $p_0 = \left(\begin{smallmatrix} 1 & 0 \\ 0 & 0 \end{smallmatrix} \right) \otimes e$, likewise we have $p_1 = \left(\begin{smallmatrix} 0 & 0 \\ 0 & 1 \end{smallmatrix} \right) \otimes e \in A(V_1) \otimes \K$. Both projections represent invertible elements in $K_0(A(x))$. Thus, $A$ satisfies the generalized Fell condition. The existence of $\mathcal{A}$ now follows from \cite[Thm.\ 4.2]{paper:DadarlatP1}.
\end{proof}

\begin{lemma} \label{lem:class_is_p}
Let $\mathcal{A} \to X$ be the locally trivial bundle from the last lemma. The isomorphism $\kappa \colon gl_1(KU^D)^1(X) = [X, B\Aut{D \otimes \K}] \to K_0(C(Y) \otimes D)^{\times}_+$ maps $[\mathcal{A}]$ to $[p]$.
\end{lemma}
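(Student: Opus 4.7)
The plan is to realise $\kappa$ concretely via the clutching construction for the cover $\{V_0, V_1\}$ and then trace $[\mathcal{A}]$ through it. Since $V_0, V_1$ are contractible, $\mathcal{A}$ becomes stably trivial on each $V_i$, and $\kappa([\mathcal{A}])$ is the class of the resulting transition function, viewed as a map $V_0 \cap V_1 \simeq Y \to \Aut{D \otimes \K}$, under the isomorphism $[Y, \Aut{D \otimes \K}] \cong K_0(C(Y) \otimes D)^{\times}_+$ provided by \cite[Thm.\ 4.6]{paper:DadarlatP2}. Unpacking this isomorphism, if $\alpha \colon C(Y) \otimes D \otimes \K \to C(Y) \otimes D \otimes \K$ is a $C(Y)$-linear automorphism representing such a transition, then $\kappa([\mathcal{A}]) = \alpha_\ast([1])$ under the stabilisation $K_0(C(Y) \otimes D) \cong K_0(C(Y) \otimes D \otimes \K)$.

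First I would construct the trivialisations of $\mathcal{A}|_{V_i}$ from the full projections $p_i \in A(V_i) \otimes \K$ produced in Lemma~\ref{lem:A_cont_C(X)}: since the corner $p_i (A(V_i) \otimes \K) p_i$ is canonically isomorphic to $A_i(V_i) = C(V_i) \otimes D$ (by the very definition of $A$ as a matrix algebra), the stabilisation procedure of Lemma~\ref{lem:phi_is_Y} yields stable isomorphisms $\varphi_i \colon C(V_i) \otimes D \otimes \K \to A(V_i) \otimes \K$ trivialising $\mathcal{A}|_{V_i}$. The transition $\alpha = \varphi_1^{-1} \circ \varphi_0$ is then a $C(V_0 \cap V_1)$-linear automorphism of $C(V_0 \cap V_1) \otimes D \otimes \K$, and what remains is to compute its induced action on $K_0$.

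By the last assertion of Lemma~\ref{lem:phi_is_Y}, the induced map $\alpha_\ast$ coincides with $H_\ast|_{V_0 \cap V_1}$, the map on $K_0$ induced by the Morita self-equivalence of $C(V_0 \cap V_1) \otimes D$ obtained by restricting $H = pB(W_0 \cap W_1)q_0$. By the construction of $H$, that map sends the class of the unit to $[p|_{V_0 \cap V_1}]$, so $\alpha_\ast([1]) = [p|_{V_0 \cap V_1}]$. Homotopy invariance of $K_0$ along the inclusion $V_0 \cap V_1 \hookrightarrow W_0 \cap W_1$ (both homotopy equivalent to $Y$) identifies this with $[p] \in K_0(C(Y) \otimes D)^{\times}_+$, giving $\kappa([\mathcal{A}]) = [p]$.

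The main obstacle I foresee is the very first identification: rewriting $\kappa$ concretely as $\alpha \mapsto \alpha_\ast([1])$ requires unpacking the equivalence $\Aut{D \otimes \K} \simeq GL_1(KU^D)$ on the level of mapping spaces and matching its $\pi_0$-pairing with the natural action of a $C(Y)$-linear automorphism on $K_0(C(Y) \otimes D)$. Once that identification is in place, Lemma~\ref{lem:phi_is_Y} does essentially all of the work, and the remainder is just homotopy invariance of $K$-theory along the equivalences $V_0 \cap V_1 \simeq Y \simeq W_0 \cap W_1$.
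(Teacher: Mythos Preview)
Your proposal is correct and follows essentially the same route as the paper: both arguments trivialise $\mathcal{A}$ over $V_0$ and $V_1$ via the full corner projections $p_i$ and the partial-isometry construction, then invoke Lemma~\ref{lem:phi_is_Y} to identify the resulting transition automorphism on $K_0$ with $H(V_0\cap V_1)_*$, which sends $[1]$ to $[p]$. The ``obstacle'' you flag --- identifying $\kappa$ with $\alpha \mapsto \alpha_*([1])$ --- is simply taken as the definition of $\kappa$ in the paper (it writes $\kappa([\mathcal{A}]) = [\hat{\phi}(1 \otimes e)]$, which is exactly your $\alpha_*([1])$), so no additional work is needed there.
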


\begin{proof} \label{pf:class_is_p}
Let $\phi \colon V_0 \cap V_1 \to \Aut{D \otimes \K}$ be the transition function of $\mathcal{A} \to X$ corresponding to a choice of trivialization of $\mathcal{A}$ over $V_0$ and $V_1$. Since $X$ is a suspension, $\phi$ completely determines the isomorphism class of $\mathcal{A}$. It induces a $*$-homomorphism $\hat{\phi} \colon D \otimes \K \to C(V_0 \cap V_1) \otimes D \otimes \K$. By definition of $\kappa$ we have $\kappa([\mathcal{A}]) = [\hat{\phi}(1 \otimes e)] \in K_0(C(V_0 \cap V_1) \otimes D)_+^{\times}$ and the class does not depend on the trivializations. Choose an isomorphism $C(X, \mathcal{A}) \to A \otimes \K$. In particular, we obtain trivializations $C(V_i, \left.\mathcal{A}\right|_{V_i}) \to A(V_i) \otimes \K \to C(V_i) \otimes D \otimes \K $ using the projections $p_i \in A(V_i) \otimes \K$ to construct partial isometries as described in the paragraph preceding Lemma~2.3. Using the fact that $A(V_0 \cap V_1) \otimes \K = L(H(V_0 \cap V_1)) \otimes \K$ if follows from this lemma that the homomorphism $\hat{\phi}$ corresponding to those trivializations maps $1 \otimes e$ to the right Hilbert $C(V_0 \cap V_1) \otimes D$-module $H(V_0 \cap V_1) = p\,H_{C(V_0 \cap V_1) \otimes D}$, which is precisely the one represented by the projection $p$.
\end{proof}

\begin{remark} \label{rem:Fell_bundle}
Let $G^{(1)} = \amalg_{i,j}\,U_{i} \cap U_j$, $G^{(0)} = \amalg_i\,U_i$ with $i,j \in \{0,1\}$. These form the morphism and object space of the \v{C}ech groupoid associated to the open cover $U_0 \cup U_1 = X$. Let $F \to G^{(1)}$ be the Fell bundle given by $F_{00} = U_0 \times D$ over $U_0 \cap U_0$, $F_{11} = U_1 \times D$ over $U_1 \cap U_1$, 
\[
	F_{01} = \{ (x, \xi) \in U_0 \cap U_1 \times H_D\ |\ p(x)\xi = \xi \}
\]
over $U_0 \cap U_1$ and $F_{10} = F_{01}^*$ (applied fiberwise) over $U_1 \cap U_0$. The multiplication of this Fell bundle is either given by the multiplication in the algebra or by left (respectively right) multiplication on the bimodule bundle. The above algebra $A$ corresponds to the $C^*$-algebra $C^*(F)$ associated to this Fell bundle. This point of view allows us to generalize the construction to spaces which are not suspensions. 
\end{remark}

Observe that $K_0(C(Y) \otimes D)$ is a right module over $K_0(D)$. Given $[p] \in K_0(C(Y) \otimes D)$ and $[q] \in K_0(D)$, we will denote the action by $[p] \cdot [q]$. The reduced $K$-Theory $\widetilde{K}_0(C(Y) \otimes D)$ of a compact space $Y$ with coefficients in $D$ is defined to be the cokernel of the map $K_0(D) \to K_0(C(Y) \otimes D)$ induced by the unital homomorphism $d \mapsto 1_{C(Y)} \otimes d$.

\begin{theorem} \label{thm:twistedK_of_suspension}
Let $D$ be a strongly self-absorbing $C^*$-algebra. Let $\mathcal{A} \to X$ be the bundle of $C^*$-algebras associated to a projection $p \in C(Y) \otimes D \otimes \K$ by the construction described above. Let $G$ be the additive subgroup given by $G = \{ x \in K_0(D)\ |\ [p] \cdot x = [1] \cdot y \in K_0(C(Y) \otimes D) \text{ for some } y \in K_0(D) \}$. Then we have a short exact sequence
\[
	0 \to K_1(C(Y) \otimes D) \to K^0_{\Pb}(X) \to G \to 0\ .
\]
where the surjection is induced by evaluation at a point $x_0 \in X$. Moreover, $K^1_{\Pb}(X) \cong \widetilde{K}_0(C(Y) \otimes D) / ([p])$.
\end{theorem}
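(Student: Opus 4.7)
The plan is to apply the Mayer-Vietoris sequence from Theorem~\ref{thm:Twisted_K}(\ref{it:e}) to the closed cover $X = V_0 \cup V_1$. Each $V_i$ is contractible, and the bundle $\left.\mathcal{A}\right|_{V_i}$ is trivialized via the full corner projection $p_i \in A(V_i) \otimes \K$ used in the proof of Lemma~\ref{lem:A_cont_C(X)}, so $K^n_{\left.\Pb\right|_{V_i}}(V_i) \cong K_n(D)$. The overlap $V_0 \cap V_1$ sits inside $U_0 \cap U_1$ and is homotopy equivalent to $Y$, hence $K^n_{\left.\Pb\right|_{V_0 \cap V_1}}(V_0 \cap V_1) \cong K_n(C(Y) \otimes D)$ after choosing either of the two trivializations available over the overlap.

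The key step is to identify the restriction maps $j_0^*, j_1^* \colon K_n(D) \to K_n(C(Y) \otimes D)$ coming from the inclusions $V_0 \cap V_1 \hookrightarrow V_i$. Over the overlap the algebra is the linking algebra $L(H(V_0 \cap V_1))$, and the two trivializations via $p_0$ and $p_1$ are related precisely by the self-Morita equivalence $H(V_0 \cap V_1) = p\,H_{C(V_0 \cap V_1) \otimes D}$. Lemma~\ref{lem:phi_is_Y} therefore identifies the change of trivialization on $K$-theory as multiplication by $[p]$. Writing $\iota_* \colon K_*(D) \to K_*(C(Y) \otimes D)$ for the map induced by the unit, this gives $j_0^*(x) = [p] \cdot \iota_*(x)$ and $j_1^*(y) = \iota_*(y)$, so the Mayer-Vietoris difference is
\[
\varphi(x,y) = [p]\cdot\iota_*(x) - \iota_*(y).
\]

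Because $D$ satisfies the UCT, $K_1(D) = 0$ by \cite[Prop.\ 5.1]{paper:TomsWinter}, so the odd summands $K_1(D) \oplus K_1(D)$ vanish in the Mayer-Vietoris sequence, which collapses to the five-term exact sequence
\[
0 \to K_1(C(Y) \otimes D) \to K^0_\Pb(X) \xrightarrow{(i_0^*, i_1^*)} K_0(D) \oplus K_0(D) \xrightarrow{\varphi} K_0(C(Y) \otimes D) \to K^1_\Pb(X) \to 0.
\]
The unit map $\iota_*$ is split-injective via evaluation at the basepoint of $Y$, so every $(x,y) \in \ker\varphi$ has $y$ uniquely determined by $x$; projection onto the first coordinate then identifies $\ker\varphi$ with $G$ and coincides with evaluation at any point of $V_0$, yielding the first short exact sequence. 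For $K^1_\Pb(X) = {\rm coker}(\varphi)$ the image of $\varphi$ equals $\iota_*K_0(D) + [p]\cdot\iota_*K_0(D)$, so dividing $K_0(C(Y)\otimes D)$ first by $\iota_*K_0(D)$ produces $\widetilde{K}_0(C(Y)\otimes D)/([p])$. The main obstacle is converting the geometric datum that the two trivializations of $\left.\mathcal{A}\right|_{V_0 \cap V_1}$ differ by the Morita equivalence encoded in $p$ into the algebraic equality $j_0^* = [p]\cdot\iota_*$, which is precisely what Lemma~\ref{lem:phi_is_Y} was designed to deliver.
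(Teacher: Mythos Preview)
Your proof is correct and follows the same route as the paper: both apply the Mayer--Vietoris sequence of Theorem~\ref{thm:Twisted_K}(\ref{it:e}) to the closed cover $X = V_0 \cup V_1$, use contractibility of the $V_i$ together with $K_1(D)=0$ to collapse the six-term sequence, and invoke Lemma~\ref{lem:phi_is_Y} to identify the difference map as $(x,y)\mapsto [p]\cdot x - [1]\cdot y$. Your write-up is slightly more explicit than the paper's --- you spell out the split-injectivity of $\iota_*$ to show that $\ker\varphi \cong G$ via the first projection, and you justify $K_1(D)=0$ by citing \cite[Prop.~5.1]{paper:TomsWinter} under the standing UCT hypothesis of the section --- but the argument is essentially identical.
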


\begin{proof} \label{pf:twistedK_of_suspension}
Note that $C(V_0 \cap V_1, \left.\mathcal{A}\right|_{V_0 \cap V_1}) \cong A(V_0 \cap V_1) \otimes \K \cong C(V_0 \cap V_1) \otimes D \otimes \K$. Since $V_0$ and $V_1$ are contractible, $K_1(D) = 0$ and $K_0(C(V_0 \cap V_1)\otimes D) \cong K_0(C(Y) \otimes D)$, the Mayer-Vietoris six-term exact sequence of $X$ boils down to 
\[
	\xymatrix{
		K_0(C(X,\mathcal{A})) \ar[r] & K_0(D) \oplus K_0(D) \ar[r]^-{\theta} & K_0(C(Y) \otimes D) \ar[d] \\
		K_1(C(Y) \otimes D) \ar[u] & 0 \ar[l] & K_1(C(X,\mathcal{A})) \ar[l]
	}
\]
We have $K_0(C(V_0 \cap V_1) \otimes D) \cong K_0(C(Y) \otimes D)$ and we can identify $C(V_0 \cap V_1,\left.\mathcal{A}\right|_{V_0 \cap V_1})$ with $A(V_0 \cap V_1) \otimes \K = L(H(V_0 \cap V_1)) \otimes \K$. With the notation from the paragraph preceding Lemma~\ref{lem:phi_is_Y} we obtain as a consequence of that lemma that 
\begin{equation} \label{eqn:theta}
	\theta(x,y) = \varphi_{1*}^{-1}(Y_{0*}(x) - Y_{1*}(y)) =  H_*(x) - [1] \cdot y = [p] \cdot x - [1] \cdot y\ .
\end{equation}
Thus, $\ker(\theta) \cong G$, which implies the existence of the claimed short exact sequence for $K_0(C(X,\mathcal{A}))$. The statement about $K_1(C(X,\mathcal{A}))$ is clear from (\ref{eqn:theta}).
\end{proof}

Let $[H] \in K^0(S^{2k})$ be the class represented by the canonical line bundle $H$ over $\C P^1$. We have $K^1(S^{2k}) = 0$ and $K^0(S^{2k}) \cong \Z[t]/(t^2)$, where $t = [H]-[1]$. Since $D$ satisfies the UCT, we have $K_0(C(S^{2k}) \otimes D) \cong K_0(D)[t]/(t^2)$. Invertible elements in this ring are of the form $a + bt$ with $a,b \in K_0(D)$ and $a$ invertible.

\begin{corollary} \label{thm:twistedK_of_spheres}
Let $D$ be a strongly self-absorbing $C^*$-algebra that satisfies the UCT. Let $\mathcal{A} \to S^{2k+1}$ be a locally trivial bundle with fiber $D \otimes \K$ associated to a projection $p \in C(S^{2k} \otimes D \otimes \K)$ as described above. Define $a,b \in K_0(D)$ by $[p] = a + bt \in K_0(C(S^{2k}) \otimes D)$ using the isomorphism from the last paragraph and suppose that $b \neq 0$. Then
\begin{align*}
K^0_{\Pb}(S^{2k+1}) & = 0 \ ,\\
K^1_{\Pb}(S^{2k+1}) & \cong K_0(D) / (b)\ .
\end{align*}
\end{corollary}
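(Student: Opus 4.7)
The plan is to specialize Theorem~\ref{thm:twistedK_of_suspension} to $X = S^{2k+1} = \Sigma S^{2k}$, so that $Y = S^{2k}$. The first task is to compute the $K$-theory of $C(S^{2k}) \otimes D$. Since $D$ satisfies the UCT and $K_1(D) = 0$ for every strongly self-absorbing $C^*$-algebra (\cite[Prop.~5.1]{paper:TomsWinter}), the K\"unneth formula gives $K_0(C(S^{2k}) \otimes D) \cong K_0(D)[t]/(t^2)$ with $\deg t = 0$, while $K_1(C(S^{2k}) \otimes D) = 0$. Writing $[p] = a + bt$ and noting that $a \in K_0(D)^{\times}$ because $[p]$ is a unit, the ring multiplication reads $[p] \cdot x = ax + (bx)\,t$ for $x \in K_0(D)$.

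For $K^1$: the reduced group is $\widetilde{K}_0(C(S^{2k}) \otimes D) \cong K_0(D) \cdot t$, identified with $K_0(D)$ via $ct \leftrightarrow c$. Under this identification the image of $x \mapsto \overline{[p] \cdot x}$ is the ideal $bK_0(D) \cdot t$, so Theorem~\ref{thm:twistedK_of_suspension} yields $K^1_{\Pb}(S^{2k+1}) \cong K_0(D)/(b)$. For $K^0$: since $K_1(C(S^{2k}) \otimes D) = 0$, the short exact sequence in the theorem collapses to $K^0_{\Pb}(S^{2k+1}) \cong G$. Now $x \in G$ means $ax + (bx)t = y$ for some $y \in K_0(D)$, which forces $bx = 0$ (with $y = ax$); hence $G = \operatorname{Ann}_{K_0(D)}(b)$.

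The only remaining point, and the main obstacle, is to verify that $\operatorname{Ann}_{K_0(D)}(b) = 0$ whenever $b \neq 0$. Here the UCT hypothesis enters a second time: by the classification of strongly self-absorbing $C^*$-algebras satisfying the UCT, $D$ is a tensor product built from $\C$, $\Cuntz{2}$, $\Cuntz{\infty}$, $\JiangSu$, and UHF algebras of infinite type. In each case $K_0(D)$ is either zero (in which case the hypothesis $b \neq 0$ is vacuous) or a unital subring of $\Q$, hence an integral domain. Therefore $bx = 0$ with $b \neq 0$ forces $x = 0$, so $G = 0$ and $K^0_{\Pb}(S^{2k+1}) = 0$.
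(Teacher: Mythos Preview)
Your proof is correct and follows the same route as the paper: specialize Theorem~\ref{thm:twistedK_of_suspension} to $Y = S^{2k}$, use the ring identification $K_0(C(S^{2k})\otimes D)\cong K_0(D)[t]/(t^2)$, and read off $G$ and the quotient describing $K^1$. The paper's own proof simply asserts that $G$ is trivial when $b\neq 0$; your appeal to the classification of strongly self-absorbing $C^*$-algebras satisfying the UCT (so that $K_0(D)$ is either $0$ or a subring of $\Q$, hence has no zero divisors) makes explicit the step the paper leaves unjustified.
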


\begin{proof} \label{pf:twistedK_of_spheres}
If $b \neq 0$, the group $G$ from Theorem~\ref{thm:twistedK_of_suspension} is trivial. Moreover, $K_1(C(S^{2k}) \otimes D) = 0$. Thus, $K^0_{\Pb}(S^{2k+1}) = 0$. The reduced $K$-group $\widetilde{K}_0(C(S^{2k})\otimes D)$ is isomorphic to $K_0(D)$ and the quotient map $K_0(C(S^{2k})\otimes D) \to \widetilde{K}_0(C(S^{2k})\otimes D) \cong K_0(D)$ can be identified with $(x + yt) \mapsto y$. This implies the statement about $K^1_{\Pb}(S^{2k+1})$.
\end{proof}

\bibliographystyle{plain}
\bibliography{TwistedK}

\end{document}